\newtheorem{theorem}{Theorem}[section]
\newtheorem{definition}{Definition}[section]
\newtheorem{lemma}{Lemma}[section]
\newtheorem{proposition}[theorem]{Proposition}
\newtheorem{remark}{Remark}[section]
\newtheorem{cond}{Condition}[section]
\newcommand{\be}{\begin{equation}}
\newcommand{\ee}{\end{equation}}
\newcommand{\nn}{\nonumber}
\newcommand{\prob}{\mathbb P}
\newcommand{\expec}{\mathbb E}
\newcommand{\atanh}{{\rm atanh}}
\newcommand{\asinh}{{\rm asinh}}
\newcommand{\bbR}{\mathbb{R}}
\newcommand{\sss}{\scriptscriptstyle}
\numberwithin{equation}{section}
\newcommand{\vep}{\varepsilon}
\newcommand{\e}{{\rm e}}
\newcommand{\gs}{\left(G_{\sss N}\right)_{N\geq1}}
\newcommand{\rem}[1]{}
\def\1{{\mathchoice {1\mskip-4mu\mathrm l}      
{1\mskip-4mu\mathrm l}
{1\mskip-4.5mu\mathrm l} {1\mskip-5mu\mathrm l}}}
\newcommand{\indic}[1]{\1_{\{#1\}}}
\newcommand{\q}{Q_{\sss N}} 
\newcommand{\m}{\mu_{G_{\sss N}}} 
\newcommand{\p}{P_{\sss N}} 
\newcommand{\widetildep}{\widetilde{P}_{\sss N}}
\newcommand{\CMNd}{\mathrm{CM}_{\sss N}(\boldsymbol{d})}
\newcommand{\CMNtwo}{\mathrm{CM}_{\sss N}(\mathrm{\textbf{2}})}
\newcommand{\CMNonetwo}{\mathrm{CM}_{\sss N}(\textbf{1},\textbf{2})}
\newcommand{\GRGw}{\mathrm{GRG}_{\sss N}(\boldsymbol{w})}
\newcommand{\eqn}[1]{\begin{equation}#1\end{equation}}
\newcommand{\eqan}[1]{\begin{align}#1\end{align}}
\newcommand{\convd}{\stackrel{\sss {\mathcal D}}{\longrightarrow}}
\newcommand{\Poi}{{\sf Poi}}
\newcommand{\CMtwo}[1]{\mathrm{CM}_{#1}(\mathrm{\textbf{2}})}
\begin{document}

\title{Annealed central limit theorems \\
for the Ising model on random graphs}
\author{
Cristian Giardin\`a$^{\textup{{\tiny(a)}}}$,
Claudio Giberti$^{\textup{{\tiny(b)}}}$,\\
Remco van der Hofstad$^{\textup{{\tiny(c)}}}$,
Maria Luisa Prioriello$^{\textup{{\tiny(a,c)}}}$\;.
\\
{\small $^{\textup{(a)}}$
University of Modena and Reggio Emilia},
{\small via G. Campi 213/b, 41125 Modena, Italy}
\\
{\small $^{\textup{(b)}}$
University of Modena and Reggio Emilia},
{\small Via Amendola 2, 42122 Reggio Emilia, Italy}
\\
{\small $^{\textup{(c)}}$ 
Eindhoven University of Technology,}
{\small P.O. Box 513, 5600 MB Eindhoven, The Netherlands}
}
\maketitle

\pagenumbering{arabic}

\begin{abstract}
The aim of this paper is to prove central limit theorems with respect to the annealed measure for the magnetization rescaled 
by $\sqrt{N}$ of Ising models on random graphs.  More precisely, we consider the general rank-1 inhomogeneous random graph (or generalized random graph), the 2-regular configuration model and the configuration model with degrees 1 and 2. For the generalized random graph, we first show the existence of a finite annealed inverse critical temperature $0\le \beta^{\mathrm{an}}_c < \infty$ and then prove our results in the uniqueness regime, i.e., the values of inverse temperature $\beta$ and external magnetic field $B$ for which either $\beta<\beta^{\mathrm{an}}_c$ and $B=0$, or $\beta>0$ and $B\neq 0$.

In the case of the configuration model, the central limit theorem holds in the whole region of the parameters $\beta$ and $B$, because phase transitions do not exist for these systems as they are closely related to one-dimensional Ising models. Our proofs are based on explicit computations that are possible since the Ising model on the generalized random graph in the annealed setting is reduced to an inhomogeneous Curie-Weiss model, while the analysis of the configuration model with degrees only taking values 1 and 2 relies on that of the classical one-dimensional Ising model.
\end{abstract}

\maketitle

\section{Introduction and main results}

\subsection{Motivation}

\subsubsection{Ising models on random graphs}
The ferromagnetic {\em Ising model} is the most well-known example of statistical mechanics system describing cooperative behavior.  
Its probabilistic formulation \cite{El} amounts to an infinite family of random variables taking values in $\{-1,1\}$ (so-called spins) whose joint law is given by the Boltzmann-Gibbs  distribution. The properties of such families of random variables are crucially determined by the spatial structure where the spin variables are sitting. For instance, for the Ising model on $\mathbb{Z}^d$ with nearest-neighbor interactions, the model displays a second-order phase transition for $d\ge 2$. Furthermore, the universality prediction states 
that the precise details of the interactions are not relevant for the near-critical behavior, so that around the critical temperature each universality class is described by a single set of critical exponents.

Besides regular lattices, in recent years much attention has been devoted to the setting in which the spin variables are placed on the
vertices of {\em random graphs} \cite{AB, DM,DMSS,DMS,DG,DGH,DGH2,DGM,DGM2,LVVZ,MMS}. Such random graphs aim to model emergent properties of complex systems consisting of many interacting agents described by a network. Several studies on empirical networks have found that two random elements of the network are typically within relatively short graph distance (the so-called {\em small-world} paradigm), whereas there does not exist a typical scale for the number of neighbors that a random element has (the so-called {\em scale-free} paradigm where degrees in the network are proposed to have a power-law distribution) \cite{New, NewBook, vdH, vdHII}. 
As a consequence, there are vertices with very high degree that often play an important role in the functionality of the network.

Thus, the combination of the ferromagnetic Ising model on a random graph describes situations in which single units establish macroscopic cooperative behavior in the presence of the random and complex connectivity structure described by a network. Here the playing field has two levels of randomness: firstly, the probabilistic law of the spins and, secondly, the probability distribution 
of the graph.  
{So far, most of the studies have focused on the so-called {\em (random) quenched} state, in which the random graph is considered to be fixed once and for all. In this paper, we instead consider the {\em annealed} state: 
the Ising model at every time 
sees an average of the possible random graphs
\cite{Bia, KBHK}, rather than one realization of the graph.
%
%
%
%
The annealed measure is particularly relevant for applications in socio-economic systems, in which the graph dynamic models the evolution of social acquaintances, or the brain, in which graph edge rearrangements represent the evolution of synaptic connections. We explain the role of the annealed and the quenched laws in more detail in the following section.}


\subsubsection{Annealing}
To understand the difference between the quenched and annealed settings, it is convenient to think of a microscopic dynamics yielding
the equilibrium state. For instance, one could imagine that the spins are subject to a Glauber dynamics with a reversible Boltzmann-Gibbs distribution and the graph also has its own dynamical evolution approaching the graph's stationary distribution. In general, these two dynamics are intertwined and both concur to determine the equilibrium state, i.e., the asymptotic value of an ergodic dynamical time average. The quenched and annealed state arise as follows:

\begin{itemize}
\item[(a)] In the \emph{quenched state}, the changes of the graph happen on a time-scale that is infinitely longer than the time-scale over 
which the changes of the spin variables occur. Thus in the quenched state the graph viewed by the evolving spins is frozen. One distinguishes between the {\em random quenched measure}, i.e., the random Boltzmann-Gibbs distribution of a given realization of the graph, and the {\em averaged quenched  measure}, i.e., the average of the Boltzmann-Gibbs distribution over the graph ensemble.
Several thermodynamic observables (e.g., the free energy per particle, the internal energy per particle, etc.) are self-averaging, and therefore  the random quenched values and their averaged quenched expectations do coincide in the thermodynamic limit. In the study of the {\em fluctuations} of the properly rescaled magnetization one finds a Gaussian limiting law. Interestingly, the asymptotic variances of the random quenched and averaged quenched state might be different \cite{GGPvdH1} due to local Gaussian fluctuations of graph properties.

\item[(b)] In the \emph{annealed state}, 
the environment seen by the spins includes all possible arrangements of the random graph. 
The annealed measure  (defined later in \eqref{annealing}) is given by the stationary 
reversible measure of a Glauber spin dynamics in which the transition from a configuration $\sigma$ to 
another configuration $\sigma'$ occurs with probability
	\eqn{
	\label{ann-Glauber}
	\frac{\expec[\e^{-\beta H(\sigma')}]}{\expec[\e^{-\beta H(\sigma)}]}\wedge 1,
	}
	where $H$ is the Hamiltonian and $\expec[\cdot]$ represents the average over the graph ensemble.
The above dynamics corresponds to an extremely fast random graph dynamics in which we do not even {\em observe} the graph at any time, but merely see it averaged over the random graph distribution. This is equivalent to an {\em effective} Glauber dynamics with (annealed) Hamiltonian equal to
	\eqn{
	H^{\rm an}(\sigma)=-\frac{1}{\beta}\log(\expec[\e^{-\beta H(\sigma)}]).
	}
Thus, by construction, the annealed measure is necessarily non-random. We will be interested in the properties of the Gibbs measure corresponding to the dynamics in \eqref{ann-Glauber}, which corresponds to the stationary or infinite-time distribution of the spins under the dynamics. While the Glauber dynamics \eqref{ann-Glauber} corresponds to infinitely fast graph dynamics compared to the spin dynamics, the stationary distribution can equally well be viewed as a dynamics where the graph and the spin evolve at equal speeds, as is the more usual viewpoint in statistical mechanics. Note that, in the definition of the annealed pressure (see \eqref{def-annealed-press}), the averages taken w.r.t.\ the spins and the graph are completely symmetric, which can be seen as another argument in favor of the view that the corresponding dynamics run equally fast and that the limiting measure corresponds to the average w.r.t.\ graph and spins alike.
In this paper, we will study annealed central limit theorems for the ferromagnetic Ising model on random graphs, in order to deduce what the effect of annealing on the macroscopic properties of the Ising model is. 
\end{itemize}

The definition of the annealed measure in the context of Ising models on  random graphs is thus different than in other class of problems  with disorder, such as random walks in random environment \cite{CGZ}. In that context, annealing is rather similar to what here we have called the averaged quenched measure.

In disordered systems (such as spin glasses \cite{MPV, CG}), annealed disorder is usually considered to be easier to deal with mathematically,  since the average on the disorder and the thermal average are treated on the same footing. This is true whenever the edges of the graph are independent, due to the form of the Hamiltonian that allows a factorization of expectations w.r.t.\ the bond variables.  If instead the edge distribution in the graph does not have a product structure, the annealed case can actually be more difficult than the quenched case. Indeed, whereas the random-quenched case is dominated by the typical realization of the graph (often having the local structure of a random tree),  in the annealed case (as in the averaged-quenched case) the rare graph samples actually give a contribution that can not be ignored. This is due to the fact that the Ising model gives rise to {\em exponential functionals} on the random graph, and expectations of exponential functionals tend to be dominated by rare events in which the exponential functional is larger than it would be under the quenched law. Deriving such statement rigorously requires a deep understanding of the large deviation properties of random graphs, a highly interesting but also challenging topic.

In this paper, we consider graph ensembles of both types, i.e., random graphs with independent edges (these are generalized random graphs) or dependent edges (in this case, we study the configuration model). These are described in the following section.

\vspace{.3cm}
\subsection{Random graph models}
We denote by  $G_{\sss N}=(V_{\sss N},E_{\sss N})$ a random graph with vertex set $V_{\sss N} = [N]$ and edge set $E_{\sss N} \subset V_{\sss N} \times V_{\sss N}$. 
Here and in the rest of this paper, we write $[N]= \{1,\ldots,N \}$ for the vertex set of $G_{\sss N}$. For any $N\in {\mathbb N}$, we denote by  $\q$ the probability law of the random graph $G_{\sss N}$. In this work, we consider two classes of random graphs: the configuration model and the generalized random graph. We next introduce these models.

\vspace{0.3cm}
\noindent
\noindent
\subsubsection{The Generalized Random Graph}
In the generalized random graph, each vertex $i \in [N]$ receives a weight $w_i>0$. Given the weights, edges are present independently, but the occupation probabilities for different edges are {\em not} identical, 
instead, they are moderated by the vertex weights. 
%
For a given sequence of weights $\boldsymbol{w} = (w_i)_{i \in [N]}$, the graph is denoted by $\GRGw$.
%
We call $I_{ij}$ the Bernoulli indicator that the edge between vertex $i$ and vertex $j$ is present and $p_{ij} = \mathbb{P}\left(I_{ij} = 1\right)$ is equal to
	\be
	p_{ij} \, = \, \frac{w_i w_j}{\ell_{\sss N} + w_i w_j},
	\ee
where $\ell_{\sss N}$ is the total vertex weight given by 
	\be
	\ell_{\sss N} = \sum_{i=1}^N w_i\;.
	\ee
Denote by $W_{\sss N} = w_{\sss I_{\sss N}}$ the weight of a uniformly chosen vertex $I_{\sss N} \in [N]$. The weight sequence  of the  generalized random graph $\GRGw$ is often assumed to satisfy a {\em regularity condition}, which is expressed as follows:

\begin{cond}[Weight regularity]  There exists a random variable $W$ such that, as $N\rightarrow\infty$,
\label{cond-WR-GRG}
\begin{itemize}
\item[$(a)$] $W_{\sss N} \stackrel{\mathcal D}{\longrightarrow} W$,
\item[$(b)$] $\mathbb{E}[W_{\sss N}] = \frac{1}{N}\sum_{i\in[N]} w_i\longrightarrow \mathbb{E}[W]< \infty$,
\item[$(c)$] $\mathbb{E}[W_{\sss N}^2] = \frac{1}{N}\sum_{i\in[N]} w_i^2 \longrightarrow \mathbb{E}[W^2]< \infty$,
\end{itemize}
where $\,\stackrel{\mathcal D}{\longrightarrow}\,$ denotes convergence in distribution. Further, we assume that $\mathbb{E}[W]>0$.
\end{cond}
In the following, we will consider deterministic sequences of weights that satisfy Condition \ref{cond-WR-GRG}. 
In many cases, one could also work with weights $\boldsymbol{w} = (w_i)_{i \in [N]}$ that are i.i.d.\ random variables. 
For the annealed setting, however, 
one has to be careful, as we will argue in more detail in Section \ref{sec-ann-prop} below.
{Indeed, when the weights are themselves random variables, they introduce a double randomness in the random 
graphs: firstly there is the randomness introduced by the weights, and secondly there is the randomness 
introduced by the edge occupation statuses, which are conditionally independent given the weights. 
Whereas the thermodynamic properties (pressure, magnetization, etc.) in the quenched measures 
of the Ising model on $\GRGw$ are not affected by the choice of deterministic or random weight sequences,
the pressure of the annealed Ising model becomes {\em infinite} when the weights have sufficiently heavy tails.}

\subsubsection{The Configuration Model}
The configuration model is a {\em multigraph}, that is, a graph possibly having self-loops and multiple edges between pairs of vertices.
Fix an integer $N$ and consider a sequence of integers $\boldsymbol{d}=(d_i)_{i \in [N]}$. The aim is to construct an undirected multigraph with $N$ vertices, where vertex $j$ has degree $d_j$. We assume that $d_j \geq 1$ for all $j \in [N]$ and we denote the total degree in the graph $\ell_{\sss N}$ by
	\be
	\ell_{\sss N} \;:=\; \sum_{i \in [N]} d_{i}.
	\ee
We assume $\ell_{\sss N}$ to be even in order to be able to construct the graph. 

Assuming that initially $d_{j}$ half-edges are attached to each vertex $j\in [N]$, one way of obtaining a uniform multigraph with the given degree sequence is to pair the half-edges belonging to the different vertices in a uniform way. Two half-edges together form an edge, thus creating the edges in the graph. 
To construct the multigraph with degree sequence $\boldsymbol{d}$, the half-edges are numbered in an arbitrary order from 1 to $\ell_{\sss N}$. Then we start by randomly connecting the first half-edge with one of the $\ell_{\sss N} -1$ remaining half-edges. Once paired, two half-edges form a single edge of the multigraph. We continue the procedure of randomly choosing and pairing the half-edges until all half-edges are connected, and call the resulting graph the \emph{configuration model with degree sequence $\boldsymbol{d}$}, abbreviated as $\CMNd$.


We will consider, in particular, the following models:
	\begin{itemize}
		\item[(1)] The \emph{2-regular random graph}, i.e., the configuration model with $d_i = 2$ for all $i \in [N]$, which we denote  by $\CMNtwo$.
		\item[(2)] The configuration model with $d_i \in \{1,2\}$ for all $i \in [N]$, which we denote by $\CMNonetwo$. In $\CMNonetwo$, for a given $p \in [0,1]$, we have $N -\lfloor pN\rfloor$ vertices of degree 1 and $\lfloor pN \rfloor$ vertices of degree 2. 
\end{itemize}




\subsubsection{Properties  of  $\GRGw$  and $\CMNd$}


The existence of a phase transition in the structural properties of the graph depends on the asymptotic degree $D$, {i.e. the weak limit, provided it exists,  of  the sequence $(D_{N})_{N\ge 1}$ where  $D_{N}$ is the degree  a uniformly chosen vertex  $I_{N}\in [N]$ in the graph.}
In order to state this result, we introduce some notation that we will frequently rely upon.  Let the integer-valued random variable $D$ have distribution $P = (p_k)_{k\geq1}$, i.e., $\mathbb{P}(D=k) = p_k$, for $k\geq 1$. We define the \emph{size-biased law} $\rho =(\rho_k)_{k \geq 0}$ of $D$  by
\begin{equation*}
	\rho_k = \frac{\left(k+1\right)p_{k+1}}{\mathbb{E}[D]},
\end{equation*}
where the expected value of $D$ is supposed to be finite, and introduce the average value of $\rho$ by
	\be\label{nu_d}
	\nu := \sum_{k\geq0} k \rho_k = \frac{\mathbb{E}[D(D-1)]}{\mathbb{E}[D]}.
	\ee 
For $\CMNtwo$, the asymptotic degree distribution equals $\prob(D=2)=1,$ while for $\CMNonetwo$, the asymptotic degree distribution equals $\prob(D=2)=p, \prob(D=1)=1-p.$ For $\GRGw$ with asymptotic weight distribution $W$, the asymptotic degree $D$ is a mixed Poisson random variable $\Poi(W)$ where $W$ appears in Condition \ref{cond-WR-GRG}, i.e., 	
	\begin{equation*}
	\prob(D=k)=\mathbb{E}\left[\e^{-W}\frac{W^k}{k!}\right],
	\end{equation*}
see e.g., \cite[Chapter 6]{vdH}.

It is well known \cite{BJR,JL} that the above random graphs have a phase transition in their maximal component. Indeed, when $\nu>1$ a giant component exists, while for $\nu\leq 1$ the maximal component has $o(N)$ vertices. Here, since the degree distribution for $\GRGw$ is $D=\Poi(W)$, we have that $\nu = \frac{\mathbb{E}[W^2]}{\mathbb{E}[W]}$, because $\mathbb{E}[D]= \mathbb{E}[W]$ and $\mathbb{E}[D(D-1)] = \mathbb{E}[W^2]$. Thus, depending on $W$,  a giant component for $\GRGw$ may exist, while it does not exist for $\CMNtwo$ and $\CMNonetwo$ since, in these cases, $\nu\le 1$. In fact, for $\CMNtwo,$ the connectivity structure is quite interesting and explained in more detail in \cite{JL}.

\subsection{Annealed measure and thermodynamic quantities}\label{iniz_def}
We continue by introducing the ferromagnetic Ising model and the {\em annealed measure}. We define them on finite graphs with $N$ vertices and then study asymptotic results  in the limit $N\to\infty$. We denote a configuration of $N$ spins by $\sigma$, where $\sigma$ is defined on the vertices of the random graph $G_{\sss N}$ whose law is $\q$.

In our previous work \cite{GGPvdH1}, we have considered two Ising models. The \emph{random-quenched measure} $\m(\sigma)$ coincides with the random Boltzmann--Gibbs distribution, where the randomness is given by the graph $ G_{\sss N}$.The \emph{averaged-quenched measure} $\p(\sigma)$ is obtained by averaging the random Boltzmann--Gibbs distribution over all possible random graphs, i.e., $\p(\sigma)=\q(\m(\sigma))$.  

In defining the {\em annealed measure}, the numerator and denominator of the Boltzmann--Gibbs distribution $\m$ are averaged separately with respect to $\q$, as formalized in the following definition:
\begin{definition}[Annealed measure]
\label{measure}
For spin variables $\sigma = (\sigma_1,...,\sigma_{\sss N})$ taking values on the space of spin configurations $\Omega_{\sss N}=\{-1,1\}^N$, we define the \emph{annealed measure} by
	\begin{equation}
		\label{annealing}
		\widetildep(\sigma) = \frac{\q\left(\exp \left[ \beta \sum_{(i,j)\in E_{\sss N}}{\sigma_i \sigma_j} 
		+ B \sum_{i \in[N]} {\sigma_i}  \right] \right)}{\q(Z_{\sss N} \left( \beta, B \right))},
	\end{equation}
where
	\begin{equation*}
		Z_{{\sss N}}(\beta, B)= 
		\sum_{\sigma \in \Omega_{\sss N}} 
		\exp \Big[\beta \sum_{(i,j)\in E_{\sss N}}{\sigma_i \sigma_j} + B \sum_{i \in[N]} {\sigma_i} \Big] 
	\end{equation*}
is the partition function. Here $\beta \ge 0$ is the inverse temperature and $B\in\mathbb{R}$ is the uniform external magnetic field.
\end{definition}
\medskip

In this paper,  with a slight abuse of notation, we use the same symbols to denote both a measure and the corresponding expectation.
Moreover, we remark that the measure defined above depends sensitively on the two parameters $(\beta,B)$. However, for the sake of notation, we will drop the dependence of the measure on these parameters.  Sometimes we will use  $\text{Var}_{\mu}(X)$ to denote the variance of a random variable $X$ with law $\mu$.
\medskip

We now define the thermodynamic quantities with respect to the annealed measure:

\begin{definition}[Thermodynamic quantities \cite{DM,DGH}]
\noindent For a given $N\in\mathbb{N}$, we introduce the following thermodynamics quantities
in finite volume:
\begin{itemize}

\item[(i)]  The  {\em annealed pressure} is given by
	\be
	\label{def-annealed-press}
	\widetilde{\psi}_{\sss N} (\beta, B)  \,=\, \frac{1}{N} \log \left(\q \left(Z_{\sss N} \left( \beta, B \right)\right)\right).
	\ee

\item[(ii)] The  \emph{annealed magnetization} is given by
	\begin{equation*}
	\widetilde{M}_{\sss N} (\beta, B) \,=\, \widetildep\left(\frac{S_{\sss N}}{N}\right),
	\end{equation*}
where the \emph{total spin} is defined as
	\begin{equation*}
	S_{\sss N}= \sum_{i \in [N]} \sigma_i \;.
	\end{equation*}


\item[(iii)] The \emph{annealed susceptibility} equals
	\begin{equation*}
	\quad \widetilde{\chi}_{\sss N}(\beta,B) \,:=\, \frac{\partial}{\partial B} \widetilde{M}_{\sss N} (\beta, B) = 	\text{Var}_{\widetildep} \left(\frac{S_{\sss N}}{ \sqrt{N}}\right). 
	\end{equation*}

\end{itemize}
\end{definition}

We are interested in the thermodynamic limit of these quantities, i.e., their limits as $N\to \infty$. In this limit, critical phenomena  may appear.  If ${\mathcal M}(\beta, B):=\lim_{N\to \infty} {\mathcal M}_{\sss N}(\beta, B)$, where $ {\mathcal M}_{\sss N}(\beta, B)$ 
is the average of $S_{\sss N}/N$ with respect to $\m(\cdot)$, $\p(\cdot)$ or $\widetildep(\cdot)$ and provided this limit exists,  criticality manifests itself in the behavior of the {\em spontaneous magnetization}  defined as ${\cal M}(\beta, 0^{+})=\lim_{B \to 0^{+}} {\cal M}(\beta, B)$. In more detail, the {\em critical inverse temperature}  is defined as
	\be\label{def_beta_c}
	\beta_{c}:=\inf \{\beta > 0\colon {\cal M}(\beta, 0^{+})>0\}.
	\ee 
and thus, depending on the setting, we can obtain the {\em quenched} and {\em annealed critical points} denoted by $\beta^{\mathrm{qu}}_{c}$ and $\beta^{\mathrm{an}}_{c}$, respectively.  When $0<\beta_{c} <\infty$, we say that the system undergoes a {\em phase transition} at $\beta=\beta_{c}$.

From \cite{DGH2}, we recall that, in the general setting of tree-like random graphs to which our models belong, the quenched critical inverse temperature is given by
	\be
	\label{betac_qu}
	\beta_c^{\mathrm{qu}} = \atanh(1/\nu),
	\ee
where $\nu$ is defined in (\ref{nu_d}). Let us remark that, in the quenched setting, since  $\nu\le 1$ for both  $\CMNtwo$ and $\CMNonetwo$, from \eqref{betac_qu} it follows immediately that $\beta^{\mathrm{qu}}_c= \infty$, which means that there is no quenched phase transition in these models. In the annealed setting, we will prove the absence of phase transition for $\CMNtwo$ and $\CMNonetwo$ below. On the contrary, we will see that a critical inverse temperature appears for $\GRGw$.

\subsection{Results}

We focus first on the study of the generalized random graph under the annealed measure, obtaining the Strong Law of Large Numbers (SLLN) and the Central Limit Theorem (CLT) for the total spin $S_{\sss N}$. Then we present the results in the annealed setting for the configuration models $\CMNtwo$ and $\CMNonetwo$.

\subsubsection{Results for $\GRGw$}
The proofs of the SLLN and CLT for $\GRGw$ require to investigate the uniqueness regime for $\GRGw$. For this, we first investigate the existence of the thermodynamic quantities in the infinite volume limit with respect to the annealed law. These results will be obtained in the next theorem. They show, in particular, that annealing changes the critical inverse temperature. Indeed, the annealed critical inverse temperature $\beta^{\mathrm{an}}_c$ is {\em strictly smaller} than the quenched critical inverse temperature $\beta_c^{\mathrm{qu}}$, when the latter exists. In the statement of the theorem below, we will use the notation $\mathcal{U}^{\mathrm{an}}$ for the annealed uniqueness regime, i.e.,
	\begin{equation*}
	\,\mathcal{U}^{\mathrm{an}}
	:=\, \left\{\left(\beta, B\right)\colon \beta\ge 0, B\neq0 \; 
	\mbox{or} \, \; 0< \beta < \beta_c^{\mathrm{an}}, B=0\right\}.
	\end{equation*}

\begin{theorem}[Thermodynamic limits for the annealed $\GRGw$]\label{term_lim_annealed}
Let  $\gs$ be a sequence of $\GRGw$ satisfying Condition \ref{cond-WR-GRG}. Then the following conclusions hold:
\begin{itemize}

\item[(i)]  For all $0\le \beta < \infty$ and for all $B\in \mathbb{R}$, the {\em annealed pressure} exists in the thermodynamic limit $N\to\infty$ and is given by
	\begin{equation}\label{lim_press_ann}
		\widetilde{\psi} (\beta, B)  \,:=\,  \lim_{N\rightarrow \infty} \widetilde{\psi}_{\sss N} (\beta, B),
	\end{equation}
its value is given in (\ref{annealedpGRG}).

\item[(ii)]   For all $\left(\beta, B\right) \in \mathcal{U}^{\mathrm{an}}$,
the {\em magnetization per vertex} exists in the limit $N\to\infty$, i.e.,
	\begin{equation}\label{lim_magn_ann}
		\widetilde{M}(\beta, B) \,:=\, \lim_{N\rightarrow \infty} \widetilde{M}_{\sss N} (\beta, B).
	\end{equation}
For $B\neq 0$ the limit value $\widetilde{M}(\beta, B)$ equals $\widetilde{M}(\beta, B) \,=\, \frac{\partial}{\partial B} \widetilde{\psi} (\beta, B)$ and is given by 
	\begin{equation*}
	\widetilde{M}(\beta,B)\, = \, 
	\mathbb{E}\left[\tanh \left(\sqrt{\frac{\sinh\left(\beta\right)}{\mathbb{E}\left[W\right]}}W z^* + B\right) \right],
	\end{equation*}
where $z^{*}=z^{*}(\beta,B)$ is the solution of the fixed-point equation
	\begin{equation*}
	z \, = \, \mathbb{E}\left[\tanh \left(\sqrt{\frac{\sinh\left(\beta\right)}{\mathbb{E}\left[W\right]}}W z + B\right) 	\sqrt{\frac{\sinh\left(\beta\right)}{\mathbb{E}\left[W\right]}}\,W\right]
	\end{equation*}
and $W$ is the limiting random variable defined in Condition \ref{cond-WR-GRG}.

\item[(iii)] The {\em spontaneous magnetization} is given by
	\[
	\widetilde{{\cal M}}(\beta) := \lim_{B\rightarrow 0^+} \widetilde{M} (\beta, B) =  \left\{\begin{array}{ll} 0 \quad \; \; \; \, \mbox{if } \; \beta \in \mathcal{U}^{\mathrm{an}} \\ 
	\neq 0  \quad  \mbox{if } \; \beta \notin \mathcal{U}^{\mathrm{an}} \end{array}\right.
	\]
and the {\em annealed critical inverse temperature} is 
	\begin{equation*}
	\beta^{\mathrm{an}}_c = \asinh \left(1/\nu\right)\;,
	\end{equation*}
where $\nu$, defined in (\ref{nu_d}), is given by $\nu = \mathbb{E}[W^2]/\mathbb{E}[W]$ and $W$ is the limiting random variable introduced in Condition \ref{cond-WR-GRG}. In particular, if $\nu >1$, then $\beta_c^{\mathrm{an}} < \beta_c^{\mathrm{qu}}$.

\item[(iv)] For all $\left(\beta, B\right) \in \mathcal{U}^{\mathrm{an}}$, the {\em thermodynamic limit of the susceptibility} exists and is given by
	\begin{equation}\label{lim_susc_ann}
		\widetilde{\chi}(\beta,B) \,:=\, \lim_{N\rightarrow \infty} \widetilde{\chi}_{\sss N} (\beta, B) 
		\,=\, \frac{\partial^2}{\partial B^2} \widetilde{\psi} (\beta, B).
	\end{equation}
\end{itemize}
\end{theorem}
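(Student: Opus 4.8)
The plan is to carry out the reduction to an inhomogeneous Curie--Weiss model announced in the abstract, and then to analyze the resulting one-dimensional variational problem by the Hubbard--Stratonovich transform and Laplace's method. First I would exploit the independence of the edges in $\GRGw$ to factorize the graph average: since the partition function is a sum over spins of a product over edges,
\[
\q(Z_{\sss N}) = \sum_{\sigma} \e^{B\sum_i\sigma_i}\prod_{i<j}\big(1 + p_{ij}(\e^{\beta\sigma_i\sigma_j}-1)\big).
\]
Taking logarithms and using $p_{ij}=O(1/N)$, the quadratic and higher terms of $\log(1+p_{ij}(\cdots))$ sum to $O(1)$ uniformly in $\sigma$, hence are negligible after division by $N$. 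Writing $\e^{\beta\sigma_i\sigma_j}=\cosh\beta+\sigma_i\sigma_j\sinh\beta$ then splits the exponent into a spin-independent graph contribution $(\cosh\beta-1)\sum_{i<j}p_{ij}$, which normalizes to $(\cosh\beta-1)\mathbb{E}[W]/2$, and an interaction $\tfrac{\sinh\beta}{\ell_{\sss N}}\sum_{i<j}w_iw_j\sigma_i\sigma_j$. Completing the square turns the latter into $\tfrac{\sinh\beta}{2\ell_{\sss N}}(\sum_i w_i\sigma_i)^2$ up to an $O(1)$ correction, which is exactly the inhomogeneous Curie--Weiss Hamiltonian.

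For part (i) I would linearize the square by a Gaussian integral and sum out the decoupled spins, writing $\q(Z_{\sss N})$ as a one-dimensional integral of $\exp[N\Phi_{\sss N}(u)]$ with
\[
\Phi_{\sss N}(u)=-\tfrac{u^2}{2}+\tfrac1N\sum_i\log 2\cosh\Big(\sqrt{\tfrac{\sinh\beta}{\mathbb{E}[W]}}\,u\,w_i+B\Big)+o(1).
\]
Laplace's method, together with the convergence of the empirical weight average guaranteed by Condition \ref{cond-WR-GRG}, then yields $\widetilde{\psi}(\beta,B)=(\cosh\beta-1)\mathbb{E}[W]/2+\sup_{u}\big[-u^2/2+\mathbb{E}[\log 2\cosh(\sqrt{\sinh\beta/\mathbb{E}[W]}\,uW+B)]\big]$, which is the claimed pressure. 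The quadratic term $-u^2/2$ controls the tails of the integral, so this holds for every $(\beta,B)$, even when the supremum is attained at two symmetric points (which only affects a lower-order constant).

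For (ii) and (iii) I would use convexity of $B\mapsto\widetilde{\psi}_{\sss N}(\beta,B)$: pointwise convergence of convex functions forces convergence of derivatives wherever the limit is differentiable, so $\widetilde{M}_{\sss N}=\partial_B\widetilde{\psi}_{\sss N}\to\partial_B\widetilde{\psi}$ on $\mathcal U^{\mathrm{an}}$. By the envelope theorem applied at the maximizer $u^*$, the $\partial_u\Phi$ term drops and $\partial_B\widetilde{\psi}=\mathbb{E}[\tanh(\sqrt{\sinh\beta/\mathbb{E}[W]}\,u^*W+B)]$, while the stationarity condition $\partial_u\Phi=0$ is precisely the fixed-point equation for $z^*$, so $u^*=z^*$ and the magnetization formula follows. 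For (iii) the map $F(z)=\mathbb{E}[\tanh(\sqrt{\sinh\beta/\mathbb{E}[W]}\,zW)\sqrt{\sinh\beta/\mathbb{E}[W]}\,W]$ is odd, concave on $[0,\infty)$, with $F(0)=0$ and $F'(0)=\sinh\beta\cdot\nu$; hence a nonzero fixed point (a nonzero spontaneous magnetization) exists iff $F'(0)>1$, i.e. $\beta>\asinh(1/\nu)$, giving $\beta^{\mathrm{an}}_c=\asinh(1/\nu)$, and $\beta^{\mathrm{an}}_c<\beta^{\mathrm{qu}}_c=\atanh(1/\nu)$ follows from $\sinh>\tanh$ on $(0,\infty)$.

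The main obstacle will be (iv): upgrading convergence of the first $B$-derivative to convergence of the second, equivalently showing $\widetilde{\chi}_{\sss N}=\Var_{\widetildep}(S_{\sss N}/\sqrt N)\to\partial_B^2\widetilde{\psi}$. Convexity alone does not give convergence of second derivatives, so I would push the Laplace expansion to second order around the maximizer $u^*$ to extract the Gaussian fluctuations of $\sum_i w_i\sigma_i$ and hence the limiting variance. This is available precisely because $(\beta,B)\in\mathcal U^{\mathrm{an}}$: uniqueness of $u^*$ and the nondegeneracy $\Phi''(u^*)<0$, both secured by the implicit function theorem applied to the fixed-point equation, guarantee that $\widetilde{\psi}$ is $C^2$ in $B$ there and that the saddle is nondegenerate. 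The principal technical burden is then to control all error terms uniformly across the parameter range, namely the discarded $O(1)$ contributions, the replacement $p_{ij}\approx w_iw_j/\ell_{\sss N}$, and the tail of the Hubbard--Stratonovich integral.
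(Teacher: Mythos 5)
For parts (i)--(iii) your plan is essentially the paper's own proof: the same edge-by-edge factorization and linearization $\beta_{ij}=\sinh(\beta)p_{ij}+O(p_{ij}^2)$ reducing the annealed partition function to an inhomogeneous Curie--Weiss model, the same Hubbard--Stratonovich/Laplace (the paper phrases the last step as Varadhan's lemma for the LDP of $Z/\sqrt{N}$, which is equivalent to your saddle-point analysis), the same convexity argument for the first $B$-derivative (the paper packages it as Lemma \ref{limit_deriv} applied to $f_N=\widetilde{\psi}_N$ with $\partial_B\widetilde{\psi}_N=\widetilde{M}_N$ non-decreasing), and the same fixed-point analysis $H'(0)=\sinh(\beta)\nu$ for the critical temperature. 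Your explicit identification of the spin-independent constant as $(\cosh\beta-1)\mathbb{E}[W]/2$ is consistent with the paper's unevaluated $\alpha(\beta)$.

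The gap is in (iv), and you have correctly located it but not closed it. Your proposal is to differentiate the Laplace expansion twice in $B$; the difficulty is that by the time you reach the one-dimensional integral you have already discarded multiplicative errors of size $\e^{O(1)}$ (from $O(\sum_{ij}p_{ij}^2\sigma_i\sigma_j)$ and from $p_{ij}\approx w_iw_j/\ell_N$) that depend on $\sigma$ and hence on $B$; their contribution to $\widetilde{\psi}_N$ is $o(1)$, but nothing in the expansion controls their second $B$-derivatives, and asymptotic expansions cannot be differentiated term by term without such control. Establishing that these corrections contribute $o(1)$ to $\partial_B^2\widetilde{\psi}_N$ essentially requires a priori variance bounds under the annealed measure, which is close to circular. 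The paper avoids this entirely: the \emph{exact} rewriting \eqref{explicit-parti-func-ann} (before any Taylor approximation) exhibits $\q(Z_N)$ as the partition function of an inhomogeneous ferromagnetic Ising model with couplings $\beta_{ij}\ge 0$ and uniform field $B$, so the GHS inequality applies at finite $N$ and gives that $B\mapsto\widetilde{\chi}_N(\beta,B)=\partial_B\widetilde{M}_N(\beta,B)$ is monotone; Lemma \ref{limit_deriv} applied now to $f_N=\widetilde{M}_N$, together with part (ii), then yields $\widetilde{\chi}_N\to\partial_B^2\widetilde{\psi}$ with no error control at all. You should replace the second-order Laplace argument by this GHS/monotonicity step, or else supply the uniform derivative bounds your route requires.
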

\medskip

Having investigated the phase diagram of the annealed Ising model on the $\GRGw$, we next state the SLLN and CLT for the total spin in the following two theorems:

\begin{theorem}[Annealed SLLN]\label{slln_ann}
Let  $\gs$ be a sequence  of $\GRGw$ graphs satisfying Condition \ref{cond-WR-GRG} then, for all $\left(\beta, B\right) \in \mathcal{U}^{\mathrm{an}}$, for any $\varepsilon > 0$ there exists a number $L=L(\varepsilon) > 0$ such that the total spin is exponentially concentrated in the form
	\be\nn
	\widetildep \left(\left | \frac{S_{\sss N}}{N}  - \widetilde{M}\right| \geq \varepsilon \right) 
	\leq \e^{-NL}  \quad \quad \mbox{for all sufficiently large N,} 
	\ee
where $\widetilde{M}= \widetilde{M}(\beta,B)$ is the annealed magnetization defined in \eqref{lim_magn_ann}.
\end{theorem}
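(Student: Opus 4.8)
The plan is to derive the exponential concentration directly from the existence and smoothness of the annealed pressure established in Theorem \ref{term_lim_annealed}, by means of a Chernoff-type bound. The key observation is that the exponential moments of the total spin under the annealed measure are governed by the finite-volume pressure itself: tilting the external field from $B$ to $B+t$ and interchanging the (finite) sum over spin configurations with the graph average $\q$ gives, for every $t\in\bbR$,
\begin{equation}
\widetildep\big[\e^{tS_{\sss N}}\big]
= \frac{\q\big(Z_{\sss N}(\beta,B+t)\big)}{\q\big(Z_{\sss N}(\beta,B)\big)},
\end{equation}
so that $\tfrac1N\log\widetildep[\e^{tS_{\sss N}}]=\widetilde{\psi}_{\sss N}(\beta,B+t)-\widetilde{\psi}_{\sss N}(\beta,B)$. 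By Theorem \ref{term_lim_annealed}(i), for each fixed $t$ the right-hand side converges as $N\to\infty$ to $\widetilde{\psi}(\beta,B+t)-\widetilde{\psi}(\beta,B)$. Note that this identity is an immediate consequence of the \emph{definition} of $\widetildep$ and does not use the independence of the edges, so the same scheme applies to the configuration-model results.

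For the upper tail I would apply Markov's inequality to $\e^{tS_{\sss N}}$ with $t>0$, obtaining for every $\varepsilon>0$
\begin{equation}
\frac{1}{N}\log\widetildep\Big(\tfrac{S_{\sss N}}{N}\ge \widetilde{M}+\varepsilon\Big)
\le -t(\widetilde{M}+\varepsilon)+\widetilde{\psi}_{\sss N}(\beta,B+t)-\widetilde{\psi}_{\sss N}(\beta,B).
\end{equation}
Passing to the limit and setting $g(t):=-t(\widetilde{M}+\varepsilon)+\widetilde{\psi}(\beta,B+t)-\widetilde{\psi}(\beta,B)$, I would use that $g(0)=0$ and that, by Theorem \ref{term_lim_annealed}(ii)--(iii), $\widetilde{\psi}(\beta,\cdot)$ is differentiable at $B$ throughout $\mathcal{U}^{\mathrm{an}}$ with $\partial_B\widetilde{\psi}(\beta,B)=\widetilde{M}$ (including the case $B=0$, $\beta<\beta_c^{\mathrm{an}}$, where $\widetilde{M}=0$ by symmetry). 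Hence $g'(0)=-\varepsilon<0$, so there is a small $t>0$ with $g(t)<0$, and choosing $L:=-g(t)>0$ yields $\widetildep(S_{\sss N}/N\ge\widetilde{M}+\varepsilon)\le\e^{-NL}$ for all large $N$. The lower tail $\widetildep(S_{\sss N}/N\le\widetilde{M}-\varepsilon)$ is treated identically with $t<0$, again using $\partial_B\widetilde{\psi}(\beta,B)=\widetilde{M}$; taking the smaller of the two rates and a union bound gives the claimed two-sided estimate.

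The main obstacle is exactly the \emph{differentiability} of the limiting pressure at $B$, and this is what restricts the statement to the uniqueness regime $\mathcal{U}^{\mathrm{an}}$. Outside it, at $B=0$ and $\beta>\beta_c^{\mathrm{an}}$, the pressure develops a kink and $\partial_B\widetilde{\psi}(\beta,\cdot)$ jumps across the values $\pm\widetilde{{\cal M}}(\beta)$, so $g'(0)$ need no longer be strictly negative and concentration around a single value must fail. The convenient structural fact that makes a single well-chosen $t$ suffice, rather than the full Gärtner--Ellis apparatus, is the convexity of $B\mapsto\widetilde{\psi}_{\sss N}(\beta,B)$, which is automatic since it is a logarithmic moment generating function; the only analytic input beyond this convexity is the derivative identity $\partial_B\widetilde{\psi}(\beta,B)=\widetilde{M}$, which already forces $g$ to have a strictly negative minimum near the origin.
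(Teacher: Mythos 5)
Your proposal is correct and follows essentially the same route as the paper: the paper identifies the annealed cumulant generating function with the difference of finite-volume pressures, invokes Theorem \ref{term_lim_annealed}(i)--(ii) for its limit and the derivative identity $\partial_B\widetilde{\psi}=\widetilde{M}$, and then cites \cite[Theorem II.6.3]{El}, which is precisely the Chernoff/tilting argument you spell out explicitly.
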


\vspace{0.3cm}
\begin{theorem}[Annealed CLT]\label{ann_CLT}
Let  $\gs$ be a sequence  of $\GRGw$ graphs satisfying Condition \ref{cond-WR-GRG}. Then, for all $\left(\beta, B\right) \in \mathcal{U}^{\mathrm{an}}$, the total spin satisfies a CLT of the form
	\begin{equation*}
	\frac{S_{\sss N} - \widetildep\left(S_{\sss N}\right) }{\sqrt{N}} \; 
	\stackrel{{\cal D}}{\longrightarrow} \; \mathcal{N}(0, \widetilde{\chi}), 
	\qquad \mbox{w.r.t.} \;\; \widetildep, \; \quad\qquad\quad \mbox{as}\; \; N \rightarrow \infty,
	\end{equation*}
where $\widetilde{\chi}=\widetilde{\chi}(\beta,B)$ is the thermodynamic limit of the annealed susceptibility defined in \eqref{lim_susc_ann} and $\mathcal{N}(0, \sigma^2)$ denotes a centered normal random variable with variance $\sigma^2$.
\end{theorem}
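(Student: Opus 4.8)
The plan is to exploit the reduction of the annealed Ising model on $\GRGw$ to an inhomogeneous (rank-$1$) Curie--Weiss model, which already drives the pressure formula \eqref{annealedpGRG} of Theorem \ref{term_lim_annealed}, and then to derive the CLT by a Hubbard--Stratonovich linearization followed by a Laplace (saddle-point) analysis. Concretely, writing $\e^{\beta\sigma_i\sigma_j}=\cosh\beta+\sigma_i\sigma_j\sinh\beta$ and averaging the numerator of \eqref{annealing} over the independent edge indicators $I_{ij}$, the expectation factorizes; taking logarithms and inserting $p_{ij}=w_iw_j/(\ell_{\sss N}+w_iw_j)$ produces an effective Hamiltonian whose leading interaction is the quadratic form $\tfrac{\sinh\beta}{2\ell_{\sss N}}\big(\sum_{i}w_i\sigma_i\big)^2$. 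I would first verify that the corrections to this form (from expanding the logarithms and from the discrepancy $p_{ij}\neq w_iw_j/\ell_{\sss N}$) are uniformly $o(1)$ per vertex under Condition \ref{cond-WR-GRG}, and hence cannot affect fluctuations on the $\sqrt N$ scale.

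The second step is to linearize the quadratic interaction. Applying the Gaussian identity introduces a single scalar auxiliary variable, and, after summing over the now-decoupled spins and shifting the external field $B$ by a source $h$, yields the representation
\[
\widetildep\big(\e^{h S_{\sss N}}\big)=\frac{\int_{\mathbb R}\e^{N\Phi_{\sss N}(z,h)}\,\dint z}{\int_{\mathbb R}\e^{N\Phi_{\sss N}(z,0)}\,\dint z},\qquad
\Phi_{\sss N}(z,h)=-\frac{\ell_{\sss N}}{2N\mathbb{E}[W]}z^2+\frac1N\sum_{i\in[N]}\log\Big(2\cosh\big(\sqrt{\tfrac{\sinh\beta}{\mathbb{E}[W]}}\,w_i z+B+h\big)\Big).
\]
By Condition \ref{cond-WR-GRG} one has $\Phi_{\sss N}(\cdot,h)\to\Phi(\cdot,h)$ with $\Phi(z,h)=-z^2/2+\mathbb{E}\big[\log\big(2\cosh(\sqrt{\sinh\beta/\mathbb{E}[W]}\,Wz+B+h)\big)\big]$, whose stationarity equation at $h=0$ is precisely the fixed-point equation for $z^*$ in Theorem \ref{term_lim_annealed}.

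To extract the CLT I would set $h=t/\sqrt N$ and evaluate both integrals by Laplace's method around the maximizer $z^*(h)$. In the uniqueness regime $\mathcal{U}^{\mathrm{an}}$ the maximizer is unique and nondegenerate, i.e.\ $\partial_z^2\Phi(z^*,0)<0$ strictly, so the Gaussian fluctuation prefactors $\sqrt{2\pi/(N|\partial_z^2\Phi_{\sss N}|)}$ appear and, since $z^*(t/\sqrt N)\to z^*(0)$, their ratio tends to $1$. Writing $F_{\sss N}(h)=\Phi_{\sss N}(z^*(h),h)$ and using the envelope identity $F_{\sss N}'(h)=\partial_h\Phi_{\sss N}(z^*(h),h)=\tfrac1N\sum_i\tanh(\,\cdot\,)$, one finds $F_{\sss N}'(0)=\widetilde{M}_{\sss N}$ and $F_{\sss N}''(0)\to\widetilde{\chi}$; a Taylor expansion of $NF_{\sss N}(t/\sqrt N)$ to quadratic order (with an $o(1)$ remainder) then gives
\[
\log\widetildep\big(\e^{t(S_{\sss N}-\widetildep(S_{\sss N}))/\sqrt N}\big)=\tfrac12 t^2\,\widetilde{\chi}_{\sss N}+o(1)\longrightarrow \tfrac12 t^2\,\widetilde{\chi},
\]
after using $\widetildep(S_{\sss N})=N\widetilde{M}_{\sss N}$ to cancel the first-order term against the centering. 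Convergence of this moment generating function on a neighbourhood of the origin (equivalently of the characteristic function via $t\mapsto \mathrm{i}t$) yields the claimed CLT by the standard continuity theorem.

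The main obstacle is the rigorous control of the Laplace approximation uniformly in the $N$-dependent source $h=t/\sqrt N$: one must show that the mass of the integral away from $z^*$ is exponentially negligible and that the quadratic expansion of $F_{\sss N}$ carries a remainder that is $o(1)$ after multiplication by $N$. This is exactly where the strict nondegeneracy $\partial_z^2\Phi(z^*,0)<0$ (which fails only at the annealed critical point excluded from $\mathcal{U}^{\mathrm{an}}$) and the second-moment control in Condition \ref{cond-WR-GRG} are indispensable; the exponential concentration of Theorem \ref{slln_ann} can be invoked to confine the analysis to a neighbourhood of $z^*$ and to discard the tails of the $z$-integral.
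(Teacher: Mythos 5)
Your overall plan (Hubbard--Stratonovich with a source $h=t/\sqrt N$, then a Laplace expansion of $NF_{\sss N}(t/\sqrt N)$ to second order, \`a la Ellis--Newman) is a legitimate strategy in principle, but as written it has a genuine gap at the very first step, and it is precisely the gap that the paper's actual proof is designed to avoid. The reduction of $\q(Z_{\sss N})$ to the rank-1 quadratic form $\frac{\sinh\beta}{2\ell_{\sss N}}\big(\sum_i w_i\sigma_i\big)^2$ is \emph{not} exact: the exact couplings are $\beta_{ij}=\tfrac12\log\frac{1+p_{ij}(\e^{\beta}-1)}{1+p_{ij}(\e^{-\beta}-1)}$, which do not factorize, and replacing them by $\sinh(\beta)w_iw_j/\ell_{\sss N}$ introduces a spin-dependent error $R_{\sss N}(\sigma)$ in the exponent bounded only by $\big(\sum_i w_i^2/\ell_{\sss N}\big)^2\to\nu^2$, i.e.\ of aggregate order $O(1)$ (the paper records this as $o(N)$ because that suffices for the pressure). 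Your justification --- that the corrections are ``uniformly $o(1)$ per vertex and hence cannot affect fluctuations on the $\sqrt N$ scale'' --- is backwards: $o(1)$ per vertex is $o(N)$ in total, whereas to prove a CLT you need the log of the moment generating function of $S_{\sss N}/\sqrt N$, a ratio of two partition functions, to be controlled to $o(1)$ precision. A spin-dependent factor $\e^{R_{\sss N}(\sigma)}$ with $R_{\sss N}=O(1)$ does not cancel between numerator and denominator and can a priori shift the limiting log-MGF by an order-one amount, so the subsequent saddle-point computation is carried out on the wrong object unless you prove that the covariance of $R_{\sss N}$ with $S_{\sss N}$ under the tilted measures vanishes --- which you do not address and which is not obvious.

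The paper sidesteps this entirely: it keeps the \emph{exact} representation \eqref{explicit-parti-func-ann} of $\q(Z_{\sss N})$ as an inhomogeneous ferromagnetic Ising model with couplings $\beta_{ij}\ge 0$, uses the approximate rank-1 form only to compute the limiting pressure (where $o(N)$ errors are harmless), and then derives the CLT abstractly: since GHS holds exactly for the annealed measure, $t\mapsto\widetilde c_{\sss N}'(t)$ is concave, so Lemma \ref{limit_deriv} upgrades convergence of the pressure to convergence of $\widetilde c_{\sss N}''(t_{\sss N})\to\widetilde\chi$ along any $t_{\sss N}=o(1)$, and the Taylor identity $\log\widetildep[\exp(t(S_{\sss N}-\widetildep(S_{\sss N}))/\sqrt N)]=\tfrac{t^2}{2}\widetilde c_{\sss N}''(t_{\sss N})$ finishes the proof. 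If you want to salvage your route, you must either (i) quantify $R_{\sss N}$ well enough to show it contributes $o(1)$ to the tilted log-partition-function difference, or (ii) abandon the direct Laplace computation in favour of the monotonicity/GHS argument, for which your explicit saddle-point analysis is not needed. Your remaining steps (uniqueness and nondegeneracy of $z^*$ in $\mathcal U^{\mathrm{an}}$, envelope identity, ratio of Gaussian prefactors tending to $1$) are sound modulo this issue, though you would also need uniform convergence of $\partial_z^2\Phi_{\sss N}$ near $z^*$, which requires uniform integrability of $W_{\sss N}^2$; this does follow from Condition \ref{cond-WR-GRG}(a) and (c) but should be said.
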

\medskip

The proofs of Theorems \ref{term_lim_annealed}, \ref{slln_ann} and \ref{ann_CLT} all heavily rely on the fact that the annealed $\GRGw$ gives rise to an inhomogeneous Curie-Weiss model, which is interesting in its own right. We continue by studying the annealed measure on $\CMNtwo$.

\subsubsection{Results for $\CMNtwo$}
Our main result for $\CMNtwo$ concerns its thermodynamic limits, a SLLN and a CLT for its total spin, as formulated in the following theorems:

\begin{theorem}[Thermodynamic limits for the annealed $\CMNtwo$]\label{term_lim_ann_CM2}
Let  $\gs$ be a sequence  of $\CMNtwo$ graphs. Then,  for all $\beta > 0$, $B\in\mathbb{R}$, the following hold:
\begin{itemize}

\item[(i)] The {\em annealed pressure} exists in the thermodynamic limit $N\to\infty$ and is given by
	\begin{equation*}
	\widetilde{\psi} (\beta, B)  \,:=\,  
	\lim_{N\rightarrow \infty} \widetilde{\psi}_{\sss N} (\beta, B) \,= \,\log \lambda_+(\beta, B),
	\end{equation*}
where
	\begin{equation*}
	\lambda_+(\beta, B) \,=\, \e^{\beta } \left[ \cosh(B) + \sqrt{\sinh^2(B)+\e^{-4\beta }}\right]\;.
	\end{equation*}

\item[(ii)] The {\em magnetization per vertex} exists in the limit $N\to\infty$
and is given by
	\begin{equation}\label{lim_magn_ann_CM2}
		\widetilde{M}(\beta, B) \,:=\, 
		\lim_{N\rightarrow \infty} \widetilde{M}_{\sss N} (\beta, B) \,=\, \frac{\sinh(B)}{\sqrt{\sinh^2(B) + \e^{-4\beta}}}.
	\end{equation}
\end{itemize}
\end{theorem}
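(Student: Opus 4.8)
The plan is to exploit the fact that $\CMNtwo$ is, by construction, a disjoint union of cycles: since every vertex has degree exactly $2$, each connected component is a cycle, where a self-loop counts as a cycle on one vertex and a double edge as a cycle on two vertices. On a single cycle $C$ spanning $|C|$ vertices (hence $|C|$ edges), the Ising partition function is the trace of the $|C|$-th power of the one-dimensional transfer matrix
\[
T=\begin{pmatrix}\e^{\beta+B} & \e^{-\beta}\\ \e^{-\beta} & \e^{\beta-B}\end{pmatrix},
\]
so it equals $\lambda_+^{|C|}+\lambda_-^{|C|}$, where $\lambda_\pm=\e^{\beta}\big[\cosh B\pm\sqrt{\sinh^2 B+\e^{-4\beta}}\big]$ are the eigenvalues of $T$. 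A direct check confirms that this formula is also correct for the degenerate cycles of length $1$ (self-loops, for which $\sum_{\sigma}\e^{\beta+B\sigma}=2\e^{\beta}\cosh B=\mathrm{Tr}\,T$) and length $2$ (double edges). Because $\cosh^2 B-\sinh^2 B=1>\e^{-4\beta}$ for $\beta>0$, both eigenvalues are positive with $0<\lambda_-<\lambda_+$; set $r:=\lambda_-/\lambda_+\in(0,1)$. Since the cycle lengths partition the vertex set, $\sum_C|C|=N$, and the partition function factorizes deterministically as
\[
Z_{\sss N}(\beta,B)=\prod_{C}\big(\lambda_+^{|C|}+\lambda_-^{|C|}\big)=\lambda_+^{N}\prod_{C}\big(1+r^{|C|}\big).
\]

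For part (i), the lower bound is immediate: every factor $1+r^{|C|}\ge 1$, so $Z_{\sss N}\ge\lambda_+^{N}$ pointwise and hence $\widetilde\psi_{\sss N}\ge\log\lambda_+$. For the matching upper bound I would use $r^{|C|}\le r$ to get $\prod_C(1+r^{|C|})\le(1+r)^{K_{\sss N}}$, where $K_{\sss N}$ denotes the number of cycles, and then control the exponential moments of $K_{\sss N}$ under the graph average $\q$. To this end I would trace the cycles while revealing the uniform pairing one edge at a time: when the $k$-th edge is formed there are $2N-2k+1$ free half-edges available, exactly one of which closes the cycle currently being traced, and this occurs with conditional probability $1/(2N-2k+1)$ irrespective of the past. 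Hence the closing indicators are independent, and
\[
K_{\sss N}\stackrel{\mathcal D}{=}\sum_{j=0}^{N-1}\Bin\Big(1,\tfrac{1}{2j+1}\Big),
\qquad
\q\big(s^{K_{\sss N}}\big)=\prod_{j=0}^{N-1}\Big(1+\tfrac{s-1}{2j+1}\Big)=N^{O(1)}
\]
for every fixed $s>1$. Taking $s=1+r$ gives $\q(Z_{\sss N})\le\lambda_+^{N}\,\q\big((1+r)^{K_{\sss N}}\big)=\lambda_+^{N}N^{O(1)}$, so $\widetilde\psi_{\sss N}\le\log\lambda_++O(N^{-1}\log N)$. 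Combined with the lower bound this yields $\widetilde\psi(\beta,B)=\log\lambda_+(\beta,B)$.

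For part (ii) I would pass the $B$-derivative through the thermodynamic limit by convexity rather than differentiate the finite-volume expression. At finite $N$ one has $\widetilde M_{\sss N}=\partial_B\widetilde\psi_{\sss N}$ directly from the definitions, since $\partial_B\q(Z_{\sss N})=\q\big(\sum_\sigma S_{\sss N}\,\e^{\cdots}\big)$ and $\widetilde M_{\sss N}=\widetildep(S_{\sss N}/N)$. Moreover $\widetilde\psi_{\sss N}(\beta,\cdot)$ is convex, being $\tfrac1N$ times the logarithm of $\sum_s a_s\,\e^{Bs}$ with nonnegative coefficients $a_s=\sum_{\sigma:\,S_{\sss N}=s}\q\big(\e^{\beta\sum\sigma_i\sigma_j}\big)$. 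Since the limit $\widetilde\psi(\beta,B)=\log\lambda_+$ is everywhere differentiable in $B$ (the radicand $\sinh^2 B+\e^{-4\beta}\ge\e^{-4\beta}>0$ keeps $\lambda_+$ smooth and positive), the standard theorem on convergence of derivatives of convex functions gives $\widetilde M_{\sss N}=\partial_B\widetilde\psi_{\sss N}\to\partial_B\widetilde\psi$. A short computation then gives $\partial_B\log\lambda_+=\sinh B/\sqrt{\sinh^2 B+\e^{-4\beta}}$, the claimed expression.

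The main obstacle is the upper bound in part (i): the annealed average is genuinely sensitive to rare realizations with an atypically large number of short cycles, where the subleading eigenvalue $\lambda_-$ still contributes, so a pointwise or high-probability bound on $K_{\sss N}$ does not suffice and one really needs exponential-moment control. The exact representation of $K_{\sss N}$ as a sum of independent Bernoulli variables is what renders this control elementary; without it one would have to invoke finer results on the cycle structure of $\CMNtwo$ (as in \cite{JL}). A secondary technical point is the exchange of the $B$-derivative with the thermodynamic limit, which is precisely why the convexity argument, rather than termwise differentiation, is used.
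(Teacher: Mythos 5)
Your proposal is correct and follows essentially the same route as the paper: the factorization of $Z_{\sss N}$ over cycles via the transfer matrix, the bound $\prod_C(1+r^{|C|})\le (1+r)^{K_{\sss N}}$ (the paper uses the marginally cruder $2^{K_{\sss N}}$), the exponential-moment control of $K_{\sss N}$ through its representation as a sum of independent Bernoulli$\big(1/(2N-2j+1)\big)$ variables yielding $\q(s^{K_{\sss N}})=N^{O(1)}$, and the passage of the $B$-derivative to the limit by convexity (the paper's Lemma 2.1 applied with $\partial_B\widetilde{M}_{\sss N}=\Var_{\widetildep}(S_{\sss N})/N\ge 0$, which is exactly your convexity observation). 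No gaps.
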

\noindent
{\remark
Since $\lim_{B \rightarrow 0^+} \widetilde{M}(\beta,B)=0$  for all $\beta>0$, by definition \eqref{def_beta_c} we conclude that there is no annealed phase transition for $\CMNtwo$. This is not surprising, since $\CMNtwo$ consists of a collection of disjoint cycles, and the Ising model does not have a phase transition in dimension one.}

Next we state the SLLN for the total spin in $\CMNtwo$:

\begin{theorem}[Annealed SLLN for $\CMNtwo$]\label{slln_ann-CM2}
Let  $\gs$ be a sequence  of $\CMNtwo$ graphs. Then, for all $\beta\geq 0, B \in \mathbb{R}$, for any $\varepsilon > 0$ there exists a number $L=L(\varepsilon) > 0$ such that the total spin is exponentially concentrated in the form
	\be\nn
	\widetildep \left(\left | \frac{S_{\sss N}}{N}  - \widetilde{M}\right| \geq \varepsilon \right) 
	\leq \e^{-NL}  \quad \quad \mbox{for all sufficiently large N,} 
	\ee
where $\widetilde{M}= \widetilde{M}(\beta,B)$ is the annealed magnetization defined in \eqref{lim_magn_ann_CM2}.
\end{theorem}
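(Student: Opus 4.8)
The plan is to prove the exponential concentration by a Cram\'er--Chernoff argument built directly on the explicit annealed pressure of $\CMNtwo$ provided by Theorem \ref{term_lim_ann_CM2}(i), so that essentially all of the graph-theoretic input (the fact that $\CMNtwo$ is a union of cycles and hence reduces to a one-dimensional transfer matrix) enters only through the closed form $\widetilde\psi(\beta,\cdot)=\log\lambda_+(\beta,\cdot)$. The key structural observation is that, under the annealed measure, the moment generating function of $S_{\sss N}$ is a ratio of annealed partition functions at shifted external field. Indeed, since the graph average $\q(\cdot)$ is linear and the factor $\e^{tS_{\sss N}}=\e^{t\sum_{i\in[N]}\sigma_i}$ is deterministic, one may pull it inside the numerator of \eqref{annealing} and absorb it into the field, which gives, for every $t\in\mathbb{R}$,
\[
\widetildep(\e^{tS_{\sss N}})=\frac{\q(Z_{\sss N}(\beta,B+t))}{\q(Z_{\sss N}(\beta,B))}=\exp\big(N[\widetilde\psi_{\sss N}(\beta,B+t)-\widetilde\psi_{\sss N}(\beta,B)]\big).
\]

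By Theorem \ref{term_lim_ann_CM2}(i) the pressure $\widetilde\psi_{\sss N}(\beta,\cdot)$ converges pointwise to $\widetilde\psi(\beta,\cdot)=\log\lambda_+(\beta,\cdot)$ for \emph{every} value of the field and \emph{every} $\beta>0$, with no regime restriction whatsoever --- which is precisely the reason the SLLN holds in the entire parameter region, in contrast to the $\GRGw$ case. Hence the scaled cumulant generating function
\[
\Lambda(t):=\lim_{N\to\infty}\tfrac1N\log\widetildep(\e^{tS_{\sss N}})=\widetilde\psi(\beta,B+t)-\widetilde\psi(\beta,B)
\]
exists and is analytic in $t$, with $\Lambda(0)=0$ and $\Lambda'(0)=\partial_B\widetilde\psi(\beta,B)=\widetilde{M}(\beta,B)$, the last identity being verified directly by differentiating the explicit $\lambda_+(\beta,B)$ and matching \eqref{lim_magn_ann_CM2}.

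For the upper tail I would apply the exponential Markov inequality: for $t>0$,
\[
\widetildep\big(\tfrac{S_{\sss N}}{N}\ge\widetilde{M}+\varepsilon\big)\le\e^{-tN(\widetilde{M}+\varepsilon)}\,\widetildep(\e^{tS_{\sss N}})=\exp\big(-N[t(\widetilde{M}+\varepsilon)-\widetilde\psi_{\sss N}(\beta,B+t)+\widetilde\psi_{\sss N}(\beta,B)]\big).
\]
Letting $N\to\infty$, the bracketed exponent converges to $t(\widetilde{M}+\varepsilon)-\Lambda(t)=t\varepsilon-[\Lambda(t)-t\Lambda'(0)]=t\varepsilon-O(t^2)$ by Taylor expansion of the analytic $\Lambda$ around $0$, which is strictly positive for $t>0$ small. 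Fixing such a $t$, for all large $N$ the finite-$N$ exponent exceeds half of its positive limit, yielding $\e^{-NL_+}$ with $L_+>0$. The lower tail $\widetildep(S_{\sss N}/N\le\widetilde{M}-\varepsilon)$ is treated identically with $t<0$, giving $\e^{-NL_-}$; a union bound and $L:=\tfrac12\min\{L_+,L_-\}$ then absorb the prefactor $2$ and produce the stated bound.

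The argument is essentially routine once the shifted-field identity is in place, so I do not expect a serious obstacle. The only two points demanding care are (i) the derivative identity $\partial_B\widetilde\psi=\widetilde{M}$, which here is immediate from the closed form of $\lambda_+$ rather than requiring an abstract convexity or G\"artner--Ellis argument; and (ii) ensuring that the replacement of $\widetilde\psi_{\sss N}$ by its limit is uniform enough, which is harmless because for each fixed $t$ the pointwise convergence $\widetilde\psi_{\sss N}(\beta,B+t)\to\widetilde\psi(\beta,B+t)$ makes the finite-$N$ exponent eventually exceed half of its strictly positive limit. The degenerate case $\beta=0$, where the spins are i.i.d.\ with mean $\widetilde{M}(0,B)=\tanh B$, follows from the same computation (or from classical Cram\'er).
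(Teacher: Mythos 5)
Your proof is correct and follows essentially the same route as the paper: the paper also reduces the statement to the identity $\widetilde{c}_{\sss N}(t)=\widetilde{\psi}_{\sss N}(\beta,B+t)-\widetilde{\psi}_{\sss N}(\beta,B)$, the convergence of the annealed pressure to $\log\lambda_+(\beta,\cdot)$ for all fields, and its differentiability at $B$, and then invokes Ellis's Theorem II.6.3 for the exponential concentration. Your explicit Chernoff/Cram\'er computation is simply a self-contained instantiation of that cited general result, so there is nothing substantively different to compare.
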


Finally, we investigate the CLT for $\CMNtwo$:

\begin{theorem}[Annealed CLT for $\CMNtwo$]
\label{CLT_annealed1}
Let  $\gs$ be a sequence  of $\CMNtwo$ graphs. Then,  for all $\beta \geq 0$, 
$B\in\mathbb{R}$, the total spin satisfies a CLT of the form
	\begin{equation*}
		\frac{S_{\sss N} - \widetildep \left(S_{\sss N}\right) }{\sqrt{N}} \; \stackrel{{\cal D}}{\longrightarrow} 
		\; \mathcal{N}\left(0, \chi \right), \qquad \mbox{w.r.t.} \;\; \widetildep, \; \quad \mbox{as} \; \; N \rightarrow \infty,
	\end{equation*}
where $\chi=\chi(\beta, B)$ is the thermodynamic limit of the quenched susceptibility (see \cite[Theorem 1.1]{GGPvdH1}) of the Ising model on $\CMNtwo$. Moreover, $\chi(\beta, B)$ is also equal to the susceptibility of the one-dimensional Ising model, i.e.,
	\begin{equation*}
	\chi(\beta, B)=\chi^{d=1}(\beta,B) = \frac{\cosh(B) \e^{-4\beta}}{(\sinh(B)+\e^{-4\beta})^{3/2}}\; .
	\end{equation*}
\end{theorem}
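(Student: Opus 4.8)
The plan is to show convergence of the annealed moment generating function of the centered, $\sqrt N$-rescaled total spin, exploiting that shifting the external field by $t/\sqrt N$ exactly reproduces the tilt $\e^{tS_{\sss N}/\sqrt N}$. Since the field term $\e^{B\sum_i\sigma_i}$ in \eqref{annealing} is non-random, one has the model-independent identity
\begin{equation*}
\widetildep\big(\e^{tS_{\sss N}/\sqrt N}\big)=\frac{\q\big(Z_{\sss N}(\beta,B+t/\sqrt N)\big)}{\q\big(Z_{\sss N}(\beta,B)\big)}=\exp\Big\{N\big[\widetilde\psi_{\sss N}(\beta,B+t/\sqrt N)-\widetilde\psi_{\sss N}(\beta,B)\big]\Big\},
\end{equation*}
where $\widetilde\psi_{\sss N}$ is the finite-volume annealed pressure of \eqref{def-annealed-press}. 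This turns the CLT into a second-order Taylor analysis of $B\mapsto\widetilde\psi_{\sss N}(\beta,B)$.

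Differentiating the pressure gives $\partial_B\widetilde\psi_{\sss N}(\beta,B)=\widetildep(S_{\sss N})/N$ and $\partial_B^2\widetilde\psi_{\sss N}(\beta,B)=\Var_{\widetildep}(S_{\sss N}/\sqrt N)$. Multiplying the identity above by $\e^{-t\widetildep(S_{\sss N})/\sqrt N}$ and Taylor expanding $\widetilde\psi_{\sss N}(\beta,B+t/\sqrt N)$ to third order in $1/\sqrt N$, the $O(\sqrt N)$ terms cancel \emph{exactly} against the centering (both equal $\sqrt N\,t\,\partial_B\widetilde\psi_{\sss N}(\beta,B)$), the surviving $O(1)$ term is $\tfrac{t^2}{2}\partial_B^2\widetilde\psi_{\sss N}(\beta,B)$, and the remainder is $O(N^{-1/2})$ times a bound on $\partial_B^3\widetilde\psi_{\sss N}$ on a shrinking interval around $B$. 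Hence it suffices to prove that $\partial_B^2\widetilde\psi_{\sss N}(\beta,B)\to\partial_B^2\widetilde\psi(\beta,B)$ and that $\partial_B^3\widetilde\psi_{\sss N}$ stays uniformly bounded near $B$. The target variance is then read off by differentiating the explicit limit $\widetilde\psi=\log\lambda_+$ of Theorem \ref{term_lim_ann_CM2} twice in $B$: one obtains $\partial_B^2\widetilde\psi(\beta,B)=\cosh(B)\e^{-4\beta}/(\sinh^2(B)+\e^{-4\beta})^{3/2}$, which is exactly $\chi^{d=1}(\beta,B)$ and coincides with the quenched susceptibility of \cite{GGPvdH1}.

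The genuinely new input beyond Theorem \ref{term_lim_ann_CM2} (which states only the pressure and magnetization limits, not the susceptibility) is therefore the convergence of the second $B$-derivative together with the uniform bound on the third. I would obtain this from the cycle structure of $\CMNtwo$: for every realization the graph is a disjoint union of cycles, so the partition function factorizes into transfer-matrix traces $\mathrm{Tr}(T^{|c|})=\lambda_+^{|c|}+\lambda_-^{|c|}$, where $\lambda_\pm$ are the eigenvalues of the $2\times2$ transfer matrix $T$, and $\widetilde\psi_{\sss N}$ is the normalized logarithm of the expectation of this product over the random cycle-length profile of the configuration model. The cleanest way to upgrade the pointwise pressure limit to a limit of derivatives is to show that $z\mapsto\widetilde\psi_{\sss N}(\beta,z)$ is analytic and uniformly bounded in a fixed complex neighborhood of the real point $B$; then Vitali's theorem forces all $B$-derivatives to converge and the third to remain bounded.

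This analyticity/zero-freeness is the step I expect to be the main obstacle. Up to a monomial, $\q(Z_{\sss N}(\beta,z))$ is a degree-$N$ polynomial in $z=\e^{2B}$ whose coefficients are the averaged interaction weights $\q\big(\e^{\beta\sum_{(i,j)\in E_{\sss N}}\sigma_i\sigma_j}\big)$, and one must keep its zeros away from the relevant positive real point uniformly in $N$. For a single ferromagnetic Ising graph this is the Lee--Yang theorem, but here the annealed weight is an average over matchings, and preserving the zero-free region under this positive, exchangeable averaging is delicate; I would establish it either via the multivariate Lee--Yang property of the ferromagnet, or, bypassing complex analysis entirely, by a direct Laplace/saddle-point analysis of the explicit one-dimensional law $\widetildep(S_{\sss N}=s)\propto\binom{N}{(N+s)/2}\,\e^{Bs}\,\q\big(\e^{\beta\sum_{(i,j)\in E_{\sss N}}\sigma_i\sigma_j}\big)$, whose Gaussian curvature at the maximizer reproduces $\chi$ directly. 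Once the centered generating function converges to $\e^{\chi t^2/2}$ for $t$ near $0$, the CLT follows by Curtiss' theorem (equivalently, passing to imaginary $t$, by L\'evy's continuity theorem), with limiting variance $\chi=\chi^{d=1}$.
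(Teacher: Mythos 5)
Your reduction of the CLT to control of $\partial_B^2\widetilde\psi_{\sss N}(\beta,B)$ is exactly the paper's starting point: the identity $\widetilde c_{\sss N}(t)=\widetilde\psi_{\sss N}(\beta,B+t)-\widetilde\psi_{\sss N}(\beta,B)$ together with a Taylor expansion (the paper uses the Lagrange form \eqref{cumgfcm2cder2}, $\tfrac{t^2}{2}\widetilde c''_{\sss N}(t_{\sss N})$ with $t_{\sss N}\in[0,t/\sqrt N]$, which packages your ``second derivative converges plus third derivative uniformly bounded'' into a single requirement). The exact cancellation of the $O(\sqrt N)$ terms against the centering and the identification of the limit variance as $\partial_B^2\log\lambda_+=\chi^{d=1}$ are also as in the paper. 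Where your proposal stops short is precisely the step you yourself flag as the main obstacle: you never prove that $\widetilde c''_{\sss N}(t_{\sss N})\to\partial_t^2\log\lambda_+(\beta,B+t)|_{t=0}$, and neither of your two suggested routes is likely to go through as stated. The Lee--Yang/Vitali route is problematic because $\q(Z_{\sss N}(\beta,\cdot))$ is a \emph{positive combination} over matchings of ferromagnetic partition functions, and the circle theorem is not preserved under positive combinations; nor is the effective annealed Hamiltonian $-\frac1\beta\log\q\big(\e^{\beta\sum I_{ij}\sigma_i\sigma_j}\big)$ manifestly a pair ferromagnet, since the edge indicators of $\CMNtwo$ are dependent and the edge-by-edge factorization that works for $\GRGw$ fails here. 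A zero-free complex neighborhood of $B$ uniform in $N$ would therefore need a genuinely new argument. The Laplace route via $\widetildep(S_{\sss N}=s)$ requires sharp asymptotics of the averaged interaction weights, which is essentially the original difficulty restated.

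The paper closes this gap by direct computation, exploiting the cycle structure: it writes $\widetilde c_{\sss N}(t)=\log\lambda_+(\beta,B+t)-\log\lambda_+(\beta,B)+\frac1N\log\big[\q\big(\prod_i(1+r_{\sss B+t}^{\sss L_{\sss N}(i)})\big)\big/\q\big(\prod_i(1+r_{\sss B}^{\sss L_{\sss N}(i)})\big)\big]$, differentiates the last term twice explicitly, and shows each resulting contribution vanishes after division by $N$. The key input is Lemma \ref{lem-part-function-tori}, the uniform bound $\q\big[\prod_{i=1}^{K_N^t}(1+\alpha\gamma^{L_N(i)})\big]\le A$ for $\gamma<1$, proved by induction on $N$ using the recursive law of the first cycle length; combined with $\q\big[(K^t_{\sss N})^2\big]^{1/2}=O(\log N)$ and Cauchy--Schwarz this kills all correction terms. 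If you want to complete your argument, this explicit differentiation plus the inductive uniform bound is the missing ingredient; it is elementary, entirely real-variable, and bypasses the zero-freeness question altogether.
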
 
\medskip

Theorems \ref{term_lim_ann_CM2}, \ref{slln_ann-CM2} and \ref{CLT_annealed1} are proved in Section \ref{three}. Their proofs heavily rely on the fact that $\CMNtwo$ consists of a collection of cycles, and the partition function on a cycle can be computed explicitly.

\subsubsection{Results for $\CMNonetwo$}
Our main result for $\CMNonetwo$ again concerns its thermodynamic limits, SLLN and CLT for its total spin. Some of the quantities involved in the statement
of these results are defined in Section \ref{four}.

\begin{theorem}[Thermodynamic limits for the annealed  $\CMNonetwo$]\label{ann_press2_CM12}
Let $\gs$ be a sequence  of $\CMNonetwo$ graphs for a given $p \in (0,1)$. Then,  for all $\beta > 0$, $B\in\mathbb{R}$, the following hold:
\begin{itemize}

\item[(i)] The {\em annealed pressure} exists in the thermodynamic limit $N\to\infty$ and is given by
	\begin{align}\label{lim_press_ann_CM12}
		\widetilde{\psi} (\beta, B)  \,&:=\, \lim_{N\rightarrow \infty} \widetilde{\psi}_{\sss N} (\beta, B) \nn\\
		&\,= \, \log{\lambda_+}(\beta, B) + \frac{1-p}{2}\log{A_+}(\beta, B) + H(s^*,t^*),
		\end{align}
where ${A_+}(\beta, B)$ is defined in (\ref{A_piu_meno}) below,
the function $H\colon [0, \frac{1-p}{2}] \times [0,p] \rightarrow \mathbb{R}$ is defined in (\ref{defH}) below, and $(s^*,t^*)$ is the unique maximum point of $(s,t)\mapsto H(s,t)$ on $[0, \frac{1-p}{2}] \times [0,p]$. 

\item[(ii)] The {\em magnetization per vertex} exists in the limit $N\to\infty$, i.e., 
	\begin{equation}\label{lim_magn_ann_CM12}
		\widetilde{M}(\beta, B) \,:=\, \lim_{N\rightarrow \infty} \widetilde{M}_{\sss N} (\beta, B) 
		\,=\, \frac{\partial}{\partial B}\widetilde{\psi} (\beta, B),
	\end{equation}
and is given in (\ref{magn_12_expl}) below.
\end{itemize} 
\end{theorem}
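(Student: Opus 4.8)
The plan is to exploit, exactly as for $\CMNtwo$, the fact that \emph{every} realization of $\CMNonetwo$ is a disjoint union of paths and cycles: the $n_1 := N-\lfloor pN\rfloor$ degree-$1$ vertices are precisely the endpoints of the paths, so the graph contains exactly $n_1/2$ paths (the evenness of $\ell_{\sss N}$ forces $n_1$ even), while the $n_2 := \lfloor pN\rfloor$ degree-$2$ vertices are distributed among the interiors of these paths and among cycles. Since the Ising partition function factorizes over connected components, and the partition function of a one-dimensional chain or loop is computed by the $2\times 2$ transfer matrix $T_{\sigma\sigma'} = \exp[\beta\sigma\sigma'+\tfrac{B}{2}(\sigma+\sigma')]$, I would record that a cycle on $k$ vertices contributes $\mathrm{tr}(T^k)=\lambda_+^k+\lambda_-^k$ and a path on $k$ vertices contributes $u^{\mathsf T}T^{k-1}u$ with $u=(\e^{B/2},\e^{-B/2})^{\mathsf T}$, where $\lambda_\pm$ are the two eigenvalues of $T$, of which $\lambda_+$ is the one appearing in Theorem~\ref{term_lim_ann_CM2}. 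Asymptotically each vertex then carries a factor $\lambda_+$ and each path an extra boundary factor, which is the origin of the terms $\log\lambda_+$ and $\tfrac{1-p}{2}\log A_+$ in \eqref{lim_press_ann_CM12}.

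The second step is the average over the configuration model. The crucial observation is that $\q(Z_{\sss N})=\sum_{\sigma}\e^{B S_{\sss N}(\sigma)}\,\q\big(\prod_{(i,j)\in E_{\sss N}}\e^{\beta\sigma_i\sigma_j}\big)$, and the matching average of the edge weights depends on $\sigma$ only through coarse statistics: the numbers of $+$ and $-$ half-edges carried by degree-$1$ and by degree-$2$ vertices. I would classify the half-edge pairings by type --- degree-$1$/degree-$1$, degree-$1$/degree-$2$, and degree-$2$/degree-$2$ --- and note that the number of degree-$1$/degree-$1$ pairs ranges over $\{0,\ldots,n_1/2\}$, which after rescaling produces the first order parameter $s\in[0,\tfrac{1-p}{2}]$, while a spin-type count among the degree-$2$ vertices produces the second order parameter $t\in[0,p]$ (the precise definitions being those of Section~\ref{four}). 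Performing the inner sums over all remaining degrees of freedom explicitly --- against the transfer-matrix weights and the factor $\e^{BS_{\sss N}}$, using products of binomials and double factorials --- reduces $\q(Z_{\sss N})$ to a two-fold sum over $(s,t)$ whose summand, after Stirling's approximation, has exponential rate $N\,\widetilde\Phi(s,t)$.

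Extracting the limit by Laplace's method then yields $\widetilde\psi=\log\lambda_++\tfrac{1-p}{2}\log A_++\max_{(s,t)}H(s,t)$, with $H$ the purely entropic remainder and $(s^*,t^*)$ its maximizer; the subdominant contributions (the $\lambda_-$ terms, short cycles, and the Stirling corrections) must be shown to be subexponential in $\tfrac1N\log$, so that only the dominant term survives. The main obstacle is precisely this variational analysis: one must prove that $H$ attains its maximum at a \emph{unique} point $(s^*,t^*)$ of $[0,\tfrac{1-p}{2}]\times[0,p]$ and that this point depends smoothly on $(\beta,B)$. This is delicate because, the annealed average being dominated by atypical matchings, the maximizer need not coincide with the typical (quenched) graph statistics. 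Concavity of $H$ --- or a direct study of the first-order critical-point equations together with a Hessian computation --- is the natural route to both uniqueness and smoothness.

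Finally, for part (ii) I would argue as follows. For each finite $N$ one has $\widetilde M_{\sss N}=\partial_B\widetilde\psi_{\sss N}$ directly from the definitions, and $\widetilde\psi_{\sss N}$ is convex in $B$ because $\q(Z_{\sss N})=\sum_\sigma \q\big(\e^{\beta\sum\sigma_i\sigma_j}\big)\,\e^{BS_{\sss N}(\sigma)}$ is a sum of exponentials in $B$ with nonnegative coefficients, whose logarithm is therefore convex. Since $\widetilde\psi_{\sss N}\to\widetilde\psi$ pointwise and the limit is differentiable in $B$ (by the uniqueness and smoothness of $(s^*,t^*)$ established above), the standard convexity lemma forces $\partial_B\widetilde\psi_{\sss N}\to\partial_B\widetilde\psi$, whence $\widetilde M=\partial_B\widetilde\psi$. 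The explicit expression \eqref{magn_12_expl} then follows by differentiating \eqref{lim_press_ann_CM12}: by the envelope theorem the variation of the $H$-term through the moving maximizer does not contribute, so only the explicit $B$-dependence of $\lambda_+$, $A_+$ and $H$ remains to be computed.
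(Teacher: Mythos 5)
Your first paragraph (component decomposition, transfer matrix, $\lambda_\pm$, $A_\pm$, one boundary factor per line) and your part (ii) argument (convexity in $B$ plus differentiability of the limit, envelope theorem at $(s^*,t^*)$) coincide with what the paper does. The genuine gap is in your second step, which is where the specific function $H$ of \eqref{defH} has to come from. Your proposed order parameters are not the paper's $(s,t)$: in the paper these variables are not graph or spin statistics at all. After factorizing over components one writes $\q(Z_{\sss N}(B))=\lambda_+^N A_+^{n_1/2}\,\q\big[\bar{Z}^{\sss(2)}_{M_{\sss N}}(B)\prod_{l\ge2}(1+a r^l)^{N_l}\big]$ with $N_l$ the number of lines of length $l$; the conditional expectation of $\prod_l(1+ar^l)^{N_l}$ given $M_{\sss N}=m$ is then computed \emph{exactly} (Lemma \ref{lem-ZN-condM=m}) by expanding the product over the $n_1/2$ lines, and $\ell=|\Gamma|$ is the number of lines selected in that binomial expansion while $k=\sum_{j\in\Gamma}i_j$ is the number of degree-$2$ vertices those selected lines contain; $(s,t)=(\ell/N,k/N)$. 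Your identification of $s$ with the rescaled number of degree-$1$/degree-$1$ half-edge pairs and of $t$ with a spin count on degree-$2$ vertices is incorrect, and the half-edge-type classification you propose (after interchanging the spin sum with the matching average) is incompatible with the transfer-matrix factorization of your first paragraph: the latter requires the full empirical distribution $(p_l^{\sss(N)})_{l\geq 2}$ of line lengths, not merely pair-type counts, and the interchange route would instead lead to a variational problem over spin fractions and unlike-pair densities. That route is not absurd, but it produces a differently parametrized rate function, and identifying it analytically with $\log\lambda_++\tfrac{1-p}{2}\log A_++H(s^*,t^*)$ is exactly the kind of step the authors themselves report being unable to do for an alternative formula (see the Remark following the theorem, where two expressions for the pressure could only be matched numerically). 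As written, your sketch therefore does not reach the statement \eqref{lim_press_ann_CM12}.

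A second, smaller gap is the treatment of the cycles. It is not enough to say that short cycles are subexponential: the number $M_{\sss N}$ of vertices in tori can be macroscopic with small probability, and one must sum over all $m\le n_2$. The paper needs (i) the uniform bound $\sup_m\expec_m[\bar{Z}_m^{\sss(2)}(B)]\le A$ of Lemma \ref{lem-part-function-tori} (proved by induction on $N$), (ii) the limit law of $M_{\sss N}$ in Lemma \ref{lem-vertices-tori}, and (iii) the uniform geometric decay in $m$ of \eqref{d_m1_su_d_m} together with the boundary estimate of Lemma \ref{contribution_l_magg}, in order to conclude in Proposition \ref{prop-exp-part-funct} that the tori only enter the prefactor $J(B)$ and not the exponential rate. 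None of this is visible in your outline. The variational analysis you flag as the main obstacle (strict concavity of $H$ via the Hessian, interior maximizer, smoothness in $B$) is indeed carried out in Lemma \ref{lemma_asymptB} and is the right concern, but it presupposes having the correct $H$ in the first place.
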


\noindent
{\remark
(a) Since $\lim_{B \rightarrow 0^+} \widetilde{M}(\beta,B)=0$ for all $\beta>0$ (the explicit expression of the
magnetization is given in \eqref{magn_12_expl}), we again conclude that there is not an annealed phase transition 
also for $\CMNonetwo$. Again this is not surprising, since $\CMNonetwo$ consists of a collection of one-dimensional 
lines and cycles, and the one-dimensional Ising model does not have a phase transition.\\
{(b) \cite{dPB} proved a CLT for the number of lines of given lengths in $\CMNonetwo$. 
Leveraging on this result, we proved in \cite{GGPvdH1} the averaged quenched CLT for the total spin of  the Ising model on 
$\CMNonetwo$. We have applied the result of \cite{dPB} to compute also the  annealed pressure \eqref{def-annealed-press}, 
but obtaining a result different form \eqref{lim_press_ann_CM12}. While we are able to see numerically that the two formulas agree, 
we have no {analytic} proof that they coincide.}\\

}
\medskip

We next state the SLLN for the total spin in $\CMNonetwo$:

\begin{theorem}[Annealed SLLN for $\CMNonetwo$]\label{slln_ann-CM12}
Let  $\gs$ be a sequence  of $\CMNonetwo$ graphs. Then, for all $\beta\geq 0, B \in \mathbb{R}$, for any $\varepsilon > 0$ there exists a number $L=L(\varepsilon) > 0$ such that the total spin is exponentially concentrated in the form
	\begin{equation*}
	\widetildep \left(\left | \frac{S_{\sss N}}{N}  - \widetilde{M}\right| \geq \varepsilon \right) 
	\leq \e^{-NL}  \quad \quad \mbox{for all sufficiently large N,} 
	\end{equation*}
where $\widetilde{M}= \widetilde{M}(\beta,B)$ is the annealed magnetization defined in \eqref{lim_magn_ann_CM12}.
\end{theorem}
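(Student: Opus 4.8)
The plan is to follow the same exponential Markov (Chernoff) strategy used for the annealed SLLN on $\GRGw$ (Theorem \ref{slln_ann}) and on $\CMNtwo$ (Theorem \ref{slln_ann-CM2}); the only model-specific input is the existence and differentiability of the annealed pressure, which for $\CMNonetwo$ is already supplied by Theorem \ref{ann_press2_CM12}. For $\beta=0$ the spins are independent under $\widetildep$ and the statement reduces to the classical Cram\'er bound, so I may assume $\beta>0$ throughout.

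The starting point is the tilting identity for the annealed moment generating function of the total spin. Replacing $B$ by $B+t$ in the Boltzmann weight reproduces exactly the extra factor $\e^{t S_{\sss N}}$, so for every $t\in\mathbb{R}$,
\[
\widetildep\big(\e^{t S_{\sss N}}\big) = \frac{\q\big(Z_{\sss N}(\beta, B+t)\big)}{\q\big(Z_{\sss N}(\beta, B)\big)} = \exp\Big(N\big[\widetilde{\psi}_{\sss N}(\beta, B+t) - \widetilde{\psi}_{\sss N}(\beta, B)\big]\Big).
\]
By Theorem \ref{ann_press2_CM12}(i), which holds for every $B\in\mathbb{R}$, the exponent on the right converges, so that
\[
\tfrac1N\log\widetildep\big(\e^{tS_{\sss N}}\big) \longrightarrow c(t) := \widetilde{\psi}(\beta, B+t) - \widetilde{\psi}(\beta, B).
\]
By Theorem \ref{ann_press2_CM12}(ii), $\widetilde{\psi}(\beta,\cdot)$ is differentiable in the field with $\frac{\partial}{\partial B}\widetilde{\psi}(\beta, B)=\widetilde{M}(\beta,B)$, hence $c$ is differentiable at $0$ with $c(0)=0$ and $c'(0)=\widetilde{M}$; in particular the first-order expansion $c(t) = \widetilde{M}\, t + o(t)$ holds as $t\to 0$.

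For the upper tail I apply the exponential Markov inequality with a small parameter $t>0$:
\[
\widetildep\Big(\frac{S_{\sss N}}{N} - \widetilde{M} \geq \varepsilon\Big) \leq \e^{-Nt(\widetilde{M}+\varepsilon)}\,\widetildep\big(\e^{tS_{\sss N}}\big) = \exp\Big(-N\big[t(\widetilde{M}+\varepsilon) - \widetilde{\psi}_{\sss N}(\beta, B+t) + \widetilde{\psi}_{\sss N}(\beta, B)\big]\Big).
\]
Using $c(t)=\widetilde{M}\,t + o(t)$, I fix $t>0$ small enough that $c(t) \leq \widetilde{M}\,t + \tfrac{\varepsilon}{2}\, t$; then for all large $N$ the bracketed exponent is at least $t(\widetilde{M}+\varepsilon) - (\widetilde{M}\, t + \tfrac{\varepsilon}{2}t) = \tfrac{\varepsilon}{2}t > 0$, giving the bound $\e^{-N\varepsilon t/2}$. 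The lower tail $\widetildep(S_{\sss N}/N - \widetilde{M} \leq -\varepsilon)$ is handled identically with a small parameter $t<0$, again invoking $c(t)=\widetilde{M}\,t + o(t)$. Taking $L(\varepsilon)$ to be the minimum of the two resulting positive rates yields the claimed exponential concentration.

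The argument requires essentially no model-specific computation beyond Theorem \ref{ann_press2_CM12}; the only point requiring care is the differentiability of $\widetilde{\psi}(\beta,\cdot)$ in $B$, which is what guarantees the expansion $c(t)=\widetilde{M}\,t + o(t)$ and hence that both Chernoff rates are strictly positive. Since $\CMNonetwo$ exhibits no annealed phase transition, $\widetilde{\psi}(\beta,\cdot)$ is smooth in $B$ for all $\beta>0$, so this expansion is valid for every $B\in\mathbb{R}$ and no exceptional field values arise. (If one prefers to avoid appealing to differentiability directly, the convexity of $B\mapsto\widetilde{\psi}_{\sss N}(\beta,B)$, inherited from the log-sum-exp structure of $\log\q(Z_{\sss N})$, together with the convergence of the pressure places one in the setting of the G\"artner--Ellis theorem, whose rate function has its unique zero at $\widetilde{M}$ and is strictly positive on $\{|x-\widetilde{M}|\geq\varepsilon\}$.)
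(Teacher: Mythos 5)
Your proposal is correct and follows essentially the same route as the paper: the paper derives the identity $\tilde{c}_{\sss N}(t)=\widetilde{\psi}_{\sss N}(\beta,B+t)-\widetilde{\psi}_{\sss N}(\beta,B)$, invokes the existence and $B$-differentiability of the limiting pressure from Theorem \ref{ann_press2_CM12}, and then cites Ellis's Theorem II.6.3, whose proof is exactly the two-sided Chernoff/tilting argument you have written out explicitly. Your only additions are the (correct) separate treatment of $\beta=0$ and the unpacking of the cited theorem, so there is no substantive difference in approach.
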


We finish with the annealed CLT in $\CMNonetwo$:

\begin{theorem}[Annealed CLT for $\CMNonetwo$]
\label{CLT_annealed2}
Let  $\gs$ be a sequence  of $\CMNonetwo$ graphs. Then, for all $\beta > 0$, 
$B\in\mathbb{R}$,
	\begin{equation*}
		\frac{S_{\sss N} - \widetildep \left(S_{\sss N}\right) }{\sqrt{N}} \; \stackrel{{\cal D}}{\longrightarrow} \; 	\mathcal{N}\left(0,\sigma^2_2\right), \qquad \mbox{w.r.t.} \;\; \widetildep, \; \quad \mbox{as} \; \; N \rightarrow \infty,
	\end{equation*}
where $\sigma^2_2$ is defined in (\ref{var_sigma_2}) below.
\end{theorem}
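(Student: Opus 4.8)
The plan is to reduce the annealed model on $\CMNonetwo$ to a controllable collection of independent one-dimensional Ising systems, and then apply a CLT for a sum of independent contributions. The graph $\CMNonetwo$, after pairing the half-edges, decomposes into disjoint connected components, each of which is either a \emph{line} (a path whose two endpoints are degree-$1$ vertices) or a \emph{cycle} (all vertices of degree $2$). The key structural input I would invoke is the result of \cite{dPB}, already cited in the excerpt, giving a CLT for the number of lines of each given length in $\CMNonetwo$; together with the fact that the vertices of degree $1$ (there are $N-\lfloor pN\rfloor$ of them) force the number of lines to be of order $N$. Conditionally on the component structure, the total spin $S_{\sss N}$ is a sum of the spins over the components, and these are \emph{independent} under the annealed measure because the Ising Hamiltonian is a sum over edges and the partition function factorizes over connected components. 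Thus, conditionally on the graph, $S_{\sss N}=\sum_{\text{components }C} S_C$ with the $S_C$ independent.

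First I would write the annealed measure as an average over the component structure of a product measure, exploiting the factorization
	$\q\big(\e^{\beta\sum_{(i,j)\in E_{\sss N}}\sigma_i\sigma_j + B\sum_i\sigma_i}\big)
	= \q\big(\prod_C Z_C(\sigma_C)\big)$,
where each $Z_C$ depends only on the spins in $C$. On a line or cycle of length $\ell$, the partition function and the generating function of $S_C$ are computable explicitly by the transfer-matrix method, exactly as the cycle computation underlying Theorem \ref{CLT_annealed1}; the relevant transfer matrix has eigenvalues $\lambda_\pm(\beta,B)$ for cycles and the line contributes the boundary-modified factor $A_\pm(\beta,B)$ appearing in \eqref{lim_press_ann_CM12}. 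The second step is to compute, for a single line and a single cycle of length $\ell$, the mean and variance of $S_C$ under the corresponding one-dimensional Ising measure; these converge, as $\ell\to\infty$, to $\ell$ times the one-dimensional magnetization and $\ell$ times the one-dimensional susceptibility $\chi^{d=1}(\beta,B)$, with correction terms of order $O(1)$ coming from the finite length and the boundaries of the lines.

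The third step is to assemble the CLT. Since the number of components is $\Theta(N)$ and each has bounded expected contribution, I would like to apply a Lyapunov/Lindeberg CLT for triangular arrays of independent summands, after a truncation to control the rare very long components (whose lengths have exponentially decaying tails in $\CMNtwo$-type pairings, so a truncation at length $C\log N$ is harmless). The conditional variance $\Var(S_{\sss N}\mid G_{\sss N})/N$ must be shown to converge in probability to a deterministic constant; this is where the component-count CLT of \cite{dPB} enters, as it controls the empirical distribution of line lengths and hence the law of large numbers for $\sum_C \Var(S_C)$. A subtlety is that the annealed measure reweights each graph by its partition function $Z_{\sss N}(\beta,B)$, so the effective law of the component structure is \emph{tilted} relative to the plain configuration-model law; I would need to verify that this tilting shifts the typical component statistics in a controlled way and does not destroy the concentration of the conditional variance. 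The limiting variance $\sigma^2_2$ in \eqref{var_sigma_2} then emerges as the tilted average of the per-component variance contributions, which by construction combines the bulk term $\chi^{d=1}(\beta,B)$ with the fluctuation contribution of the random number of lines.

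The main obstacle I anticipate is precisely this interaction between the annealed tilt and the fluctuations in the number and lengths of lines. Unlike the pure $\CMNtwo$ case (Theorem \ref{CLT_annealed1}), where every component is a cycle and the limiting variance is simply $\chi^{d=1}$, here the randomness in the line/cycle split contributes an \emph{extra} variance term: fluctuations in the number of long lines directly perturb $S_{\sss N}$, and these fluctuations are themselves Gaussian by \cite{dPB}. Disentangling the within-component thermal fluctuations (giving $\chi^{d=1}$) from the across-component structural fluctuations (giving the additional term in $\sigma^2_2$), while simultaneously tracking how the annealed reweighting by $Z_{\sss N}$ biases both, is the delicate part. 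I would handle it by writing $S_{\sss N}=\expec[S_{\sss N}\mid G_{\sss N}] + (S_{\sss N}-\expec[S_{\sss N}\mid G_{\sss N}])$, establishing joint asymptotic normality of the structural part (via \cite{dPB}) and the thermal part (via the independent-component CLT) under the tilted law, and finally identifying $\sigma^2_2$ as the sum of the two variances plus any covariance, which I expect to vanish or simplify because the thermal fluctuations average to zero within each fixed component.
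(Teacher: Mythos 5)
Your overall decomposition (lines and cycles, factorization of the partition function over components, transfer matrices with eigenvalues $\lambda_\pm$ and boundary factors $A_\pm$) matches the paper's starting point, but the core of your argument --- controlling the component statistics under the annealed measure by invoking the CLT of \cite{dPB} for line counts and then ``verifying that the tilting shifts the typical component statistics in a controlled way'' --- has a genuine gap, and it is exactly the gap that the paper's machinery is built to fill. The annealed measure reweights each graph by $Z_{\sss N}(\beta,B)=\e^{\Theta(N)}$, so the effective law of the empirical line-length distribution $p^{\sss(N)}$ is an exponential tilt at large-deviation scale: the typical value of $p^{\sss(N)}$ under the annealed measure is \emph{not} the configuration-model typical value around which \cite{dPB} establishes Gaussian fluctuations, but the maximizer of a variational problem. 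The paper handles this by an explicit combinatorial formula for $Q_{\sss N}[\prod_l c_l(B)^{N_l}\mid M_{\sss N}=m]$ (Lemma \ref{lem-ZN-condM=m}), Stirling asymptotics yielding the rate function $H(s,t)$ and prefactor $C(s,t)$ (Lemma \ref{lemma_asymptB}), and a discrete Laplace method around the strictly concave maximum $(s^*,t^*)$ (Lemma \ref{asy_M_N_equal_zero}), all assembled in Proposition \ref{prop-exp-part-funct} to obtain $Q_{\sss N}[\bar{Z}^{\sss(2)}_{M_{\sss N}}(B)\prod_l c_l(B)^{N_l}]=J(B)\e^{I(B)N}(1+o(1))$ with $J$ continuous and $I$ smooth; the extra term in $\sigma_2^2$ beyond the one-dimensional susceptibility is precisely $\partial_t^2 I(B+t)|_{t=0}$. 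Notably, the authors state in the remark following Theorem \ref{ann_press2_CM12} that they did attempt to compute the annealed pressure via \cite{dPB} and obtained a formula they can only match numerically, not analytically, with \eqref{lim_press_ann_CM12} --- strong evidence that the route you propose does not close without substantial new input.

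A second, smaller divergence: the paper never uses conditional independence of the per-component spins plus a Lindeberg/Lyapunov CLT. It works entirely with moment generating functions, writing $\widetildep[\e^{tS_{\sss N}/\sqrt{N}}]=Q_{\sss N}[Z_{\sss N}(B+t/\sqrt{N})]/Q_{\sss N}[Z_{\sss N}(B)]$ and extracting the Gaussian limit from the expansion of $I(B+t/\sqrt{N})$ to second order together with the continuity of $J$. Your thermal/structural split is conceptually appealing and closer in spirit to the averaged-quenched argument of \cite{GGPvdH1}, but in the annealed setting the ``structural'' half is precisely the tilted large-deviation computation you have deferred, so as written the proposal does not constitute a proof.
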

\medskip

Theorems \ref{ann_press2_CM12}, \ref{slln_ann-CM12} and \ref{CLT_annealed2} are proved in Section \ref{four}.

\subsection{Discussion}
\label{discussio}

\subsubsection{Properties of annealing}
\label{sec-ann-prop}

From the results described above, the following general picture emerges on the effect of annealing: 

\begin{itemize}

\item[(i)]
First of all, in the presence of a ferromagnetic phase transition, annealing can change the critical 
temperature, meaning that $\beta_c^{\mathrm{an}} < \beta_c{^{\mathrm{qu}}}$. We proved this for the rank-1 inhomogeneous graph.
For the configuration models with vertex degrees at most two that we have analyzed, it holds $\beta_c^{ \mathrm{an}} = \beta_c{^{\mathrm{qu}}} =\infty$. We conjecture that in the general case when there is a positive proportion of vertices of degree at least 3 and $\nu>1$ (so that there exists a giant component), an annealed positive critical temperature exists. We believe that this annealed critical temperature is strictly larger than the quenched critical temperature whenever the vertex degrees fluctuate and a positive proportion of the vertices have at least degree three.

\item[(ii)]
Furthermore, the annealed state satisfies a central limit theorem for the rescaled magnetization, as the quenched state does as proved in our previous paper \cite{GGPvdH1}. Unfortunately, we can only prove this for certain random graph sequences, but we believe this to be true in general. The variance of the annealed CLT and the variance of the quenched CLT are different whenever the degrees are allowed to fluctuate. We showed this in the case of the generalized random graph, where they can not be ordered because the quenched and annealed  critical temperatures are different
and the quenched and annealed susceptibilities diverge at the critical point. For $\CMNonetwo$ having zero critical temperature and fluctuating degrees
the variances are also different, and we believe the annealed variance to be {\em larger} than the quenched variance. 
Unfortunately, we have not been able to prove this.

\item[(iii)]
From the analysis of the $\CMNtwo$, we see that both the annealed critical temperature and the annealed variance are the same of their quenched counterparts. We conjecture this behavior to occur for all random regular graphs.

{
\item[(iv)] In the $\GRGw$, when the weights $(w_i)_{i\in[N]}$ are i.i.d and such such $\prob(w_1>w)= cw^{-(\tau-1)}(1+o(1))$ for some {$\tau>1$}, the annealed partition function satisfies
	\begin{equation}
	\label{weird-PF}
	\q(Z_{\sss N}(\beta,B))=\e^{\frac{\beta N^2}{2}(1+o(1))}.
	\end{equation}
Thus, the effect of annealing of the weights is dramatic, as the pressure becomes infinite for every $\beta>0$. To see \eqref{weird-PF}, we first note that the upper bound is trivial, as $H(\sigma)\leq N(N-1)/2$. Thus, it suffices to prove a matching lower bound. With $K_{\sss N}$ the complete graph on $N$ vertices {and for $a>0$}, 
	\begin{align*}
	\prob(\GRGw=K_{\sss N})\,=\,\expec\Big[\prod_{ij}p_{ij}\Big]\,\geq\, &\prob({w}_i\in [N^a,2N^a]\forall i\in [N])\\
	&\times \expec\Big[\prod_{ij}p_{ij}\mid {w}_i\in [N^a,2N^a] \forall i\in [N]\Big].
	\end{align*}
We analyze both terms separately. Firstly, since the weights are i.i.d.,
	\begin{equation*}
	\prob({w}_i\in [N^a,2N^a]\forall i\in [N])=\prob({w}_1\in [N^a,2N^a])^N
	\geq \Big(cN^{-a(\tau-1)}\Big)^N=\e^{o(N^2)}.
	\end{equation*}
Secondly, when ${w}_i\in [N^a,2N^a]$ for every $i$, there exists $b>0$ such that
	\begin{equation*}
	p_{ij}\geq 1-bN^{1-a}.
	\end{equation*}
Therefore,
	\begin{equation*}
	\expec\Big[\prod_{ij}p_{ij}\mid {w}_i\in [N^a,2N^a]\forall i\in [N]\Big]
	\geq \Big(1-bN^{1-a}\Big)^{N(N-1)/2}=1-o(1),
	\end{equation*}
when $a>3$. Thus,
	\begin{equation*}
	\q(Z_{\sss N}(\beta,B))\geq Z_{\sss N}^{\sss K_N}(\beta,B)\e^{o(N^2)}\geq \e^{\frac{\beta N^2}{2}(1+o(1))},
	\end{equation*}
where $Z_{\sss N}^{\sss K_N}(\beta,B)$ is the partition function on the complete graph.
This proves the claim.
}

\end{itemize}
\medskip

We will expand on the analysis of the annealed critical behavior of Ising models on generalized random graphs in a forthcoming paper \cite{GGPvdH3}, where we study critical exponents around the annealed critical temperature and we derive non-classical asymptotic laws {\em at} criticality.

\subsubsection{CLT proof strategy}\label{sec_proof_strategy}
By applying a commonly used strategy \cite{El}, we can prove CLTs for $(S_{\sss N})_{N \ge 1 }$ by showing that the moment generating function of the rescaled total spin $V_{\sss N}=\frac{S_{\sss N} - \mathbb{E}(S_{\sss N})}{\sqrt{N}}$, converges in a neighborhood of $t=0$ to the moment generating function of a centered Gaussian random variable. The convergence can be achieved by considering the
so-called scaled \emph{cumulant generating functions}  of $S_{\sss N}$, defined as
	\begin{equation}
		\label{c_N}
		c_{\sss N}(t) = \frac{1}{N} \log{\mathbb{E} \left[\exp\left(t S_{\sss N}\right)\right]},
	\end{equation}
and by proving the convergence of the sequence  $(c_{\sss N}^{\prime\prime}(t_{\sss N}))_{N\ge 1}$ for $t_{\sss N}=o(1)$  to a finite value  $\chi$, which turns out to be the variance of the normal limit. This strategy has been followed in the quenched setting in \cite{GGPvdH1} where, specializing  $\mathbb{E}$ to the relevant measures, the CLT was proved for the Ising model on the whole class of locally tree-like random graphs in the random quenched setting, and for the $\CMNtwo$ and $\CMNonetwo$ models in the averaged quenched setting.  In the former case, the limit $c(t) := \lim_{N\to\infty} c_{\sss N}(t)$ can be established as a simple consequence of the existence of the random quenched pressure on locally tree-like graphs, while the convergence of $(c_{\sss N}^{\prime\prime}(t_{\sss N}))_{N\ge 1}$ follows from the concavity of the first derivatives of the cumulant generating functions. 
In the random quenched setting, this in turn is a consequence of the GHS inequality, which holds for the ferromagnetic Boltzmann-Gibbs measure $\mu_{G_{\sss N}}$. 
On the other hand, under the averaged quenched measure this derivative can not be expressed in terms of the averaged quenched magnetization to exploit the GHS inequality.  
Because of that, only the $\CMNtwo$ and $\CMNonetwo$ setting have been treated in \cite{GGPvdH1} explicitly, by exploiting the structure of the graphs and 
connecting these systems to the one-dimensional Ising model.

A similar scenario is found in this paper, where the approach to the proof of the CLT described above is applied to the annealed setting, i.e., with \eqref{c_N} replaced by the {\em annealed} cumulant generating function
	\begin{equation*}
	\tilde{c}_{\sss N}(t) = \frac{1}{N} \log{\widetildep \left[\exp\left(t S_{\sss N}\right)\right]}
	\end{equation*}
that can be connected to the annealed pressure, since  $\tilde{c}_{\sss N}(t) =\widetilde{\psi}_{\sss N}(\beta,B+t)-\widetilde{\psi}_{\sss N}(\beta,B)$,  see \eqref{def-annealed-press}.

We will show by an explicit computation that the annealed pressure of $\GRGw$ coincides with that of an inhomogeneous Curie-Weiss model. From this fact, the thermodynamic limit of the annealed pressure, magnetization and susceptibility can be obtained. This again relies on the GHS inequality that is valid also for this inhomogeneous ferromagnetic system. 
Thus, for the generalized random graph, the annealed CLT can be  proven in a similar way as for the random quenched measure.  

On the other hand, the proofs  of the CLT for the configuration models do {\em not} follow from the abstract argument based on the GHS inequality, since GHS is not available in the general annealed context. Because of that, we have to explicitly control the limit $(\tilde{c}_{\sss N}^{\prime\prime}(t_{\sss N}))_{N\ge 1}$ throughout the computation of the annealed  pressure. It is relatively simple to accomplish this task in the case of the regular $\CMNtwo$ graph consisting of cycles only.  The fluctuating degree of  $\CMNonetwo$ makes the computation of the pressure and of the  limit $(\tilde{c}_{\sss N}^{\prime\prime}(t_{\sss N}))_{N\ge 1}$ much more involved. $\CMNonetwo$ consists of both lines and cycles. While the cycles give a vanishing contribution to the thermodynamic limit, the distribution of the length of the lines has to be carefully analyzed and its Gaussian fluctuations appear in the CLT for the total spin.

\subsubsection{Paper organization}
The rest of the paper is organized as follows. 
In Section \ref{two} we deal with $\GRGw$ for which we compute the pressure and magnetization in the thermodynamic limit, identify the critical temperature and then prove the SLLN and CLT.  All of these results rely on the fact that the Ising model on $\GRGw$ in the annealed setting turns into an inhomogeneous Curie-Weiss model. The pressures and CLTs for the 2-regular configuration model are considered in Section \ref{three} and for the configuration model with vertex degrees 1 and 2 in Section \ref{four}. 
In the former case, we show that the variance of the limiting normal variable is the susceptibility of the one-dimensional
 Ising model.  In the latter case, which is much more difficult, the varying degrees of the vertices affect the pressure and the limiting distribution. In fact, the limiting variance is the sum of that of the one-dimensional Ising model and of an extra term emerging from the fluctuations of the connected structures of the graph. 

\section{Proofs for $\mathrm{GRG}_{\sss N}({\bf w})$}{GRG}\label{sec_annealedGRG}\label{two}
In this section, we derive our results for the generalized random graph $\GRGw$ stated in Theorems \ref{term_lim_annealed}, \ref{slln_ann} and \ref{ann_CLT}.
\subsection{Annealed thermodynamic limits: Proof of Theorem \ref{term_lim_annealed}}

The proof is divided into several steps. \\

\paragraph{Annealed partition function.} 
We start by analyzing the average of the partition function for $\GRGw$. {By remembering that in this random graph the edges are independent and  denoting by  $I_{ij}$  the Bernoulli indicator that the edge between vertex $i$ and vertex $j$ is present}, we compute
  \begin{align*}\nonumber
	\q \left(Z_{\sss N} \left( \beta, B \right)\right) \, & = \, \q\Big( \sum_{\sigma \in \Omega_{\sss N}} \exp \Big[ \beta \sum_{i<j}{I_{ij}\sigma_i \sigma_j} + B \sum_{i \in[N]}  {\sigma_i}  \Big] \Big)  \\ \nonumber
	&  = \,  \sum_{\sigma \in \Omega_{\sss N}} \e^{B \sum_{i \in[N]} {\sigma_i}} 
	\q\left( \e^{\beta \sum_{i<j}{I_{ij}\sigma_i \sigma_j}} \right) \\ \nonumber
	&  = \,  \sum_{\sigma \in \Omega_{\sss N}} \e^{B \sum_{i \in[N]} {\sigma_i}}
	\prod_{i<j} \q \big(\e^{\beta I_{ij}\sigma_i \sigma_j}\big)\\ 
	&  = \,  \sum_{\sigma \in \Omega_{\sss N}}\e^{B \sum_{i \in[N]} {\sigma_i}} \prod_{i<j} \left(\e^{\beta\sigma_i \sigma_j}p_{ij} + \left(1 - p_{ij}\right)\right).
	\end{align*}
We rewrite
	\begin{equation*} 
	\e^{\beta\sigma_i \sigma_j}p_{ij} + \left(1 - p_{ij}\right) \, = \, C_{ij}\e^{\beta_{ij}\sigma_i \sigma_j},
	\end{equation*} 
where $\beta_{ij}$ and $C_{ij}$ are  chosen such that
	\begin{equation*} 
	\e^{-\beta}p_{ij} + \left(1 - p_{ij}\right) \, =\, C_{ij}\e^{-\beta_{ij}},
	\qquad
	\text{and}
	\qquad \e^{\beta}p_{ij} + \left(1 - p_{ij}\right) 
	\, =\, C_{ij}\e^{\beta_{ij}}.
	\end{equation*}
Now, by adding and dividing the two equations of the system above, we get
	\begin{equation*} 
	C_{ij} \cosh\left(\beta_{ij}\right) \, = \, p_{ij} \cosh(\beta) + \left(1 - p_{ij}\right),
	\qquad\quad
	\beta_{ij}\, = \, \frac{1}{2} \log \frac{\e^{\beta}p_{ij} + \left(1 - p_{ij}\right)}{\e^{-\beta}p_{ij} + \left(1 - p_{ij}\right)}.
	\end{equation*}
Then, using the symmetry $\beta_{ij}=\beta_{ji}$ we arrive at
	\begin{align} \nonumber
	\q \left(Z_{\sss N} \left( \beta, B \right)\right) \, 
	& = \, \Big(\prod_{i < j} C_{ij}\Big)\sum_{\sigma \in \Omega_{\sss N}}
	\e^{B \sum_{i \in[N]} {\sigma_i}}\e^{\sum_{i<j}{\beta_{ij}\sigma_i \sigma_j}} \\
	& = \, G(\beta)G_1(\beta)\sum_{\sigma \in \Omega_{\sss N}}
	\e^{B \sum_{i \in[N]} {\sigma_i}}\e^{\frac{1}{2}\sum_{i,j \in[N]}{\beta_{ij}\sigma_i \sigma_j}},
	\label{explicit-parti-func-ann}
	\end{align}
where $G(\beta) = \prod_{i < j} C_{ij}$ and $G_1(\beta) = \prod_{i\in[N]} \e^{-\beta_{ii}/2}$ and we write $p_{ii}=w_i^2/(\ell_{\sss N}+w_i^2).$ This is the starting point of our analysis. We can recognize the r.h.s.\ as the partition function of an inhomogeneous Ising model on the complete graph, where the coupling constant between vertices $i$ and $j$ is equal to $\beta_{ij}.$ In the next step, we analyze this partition function in detail.\\

\paragraph{Towards an inhomogeneous Curie-Weiss model.}
We continue by showing that $\beta_{ij}$ is close to factorizing into a contribution due to $i$ and to $j$. For this, by a Taylor expansion of $x\mapsto \log(1+x)$,
	\begin{align*} \nonumber
	\beta_{ij}\, & = \, \frac{1}{2}\log\left(1 + p_{ij}(\e^{\beta} -1)\right) 
	- \frac{1}{2}\log\left(1 + p_{ij}(\e^{-\beta} -1)\right) \\
	& = \,\frac{1}{2}p_{ij}(\e^{\beta} -1) - \frac{1}{2}p_{ij}(\e^{-\beta} -1) + O(p_{ij}^2)
	= \, \sinh(\beta)p_{ij}+ O(p_{ij}^2).
	\end{align*}
Then,
	\eqan{
	&\q \left(Z_{\sss N}(\beta, B)\right) \,\\
	&\qquad= \, G_2(\beta)\sum_{\sigma \in \Omega_{\sss N}}\e^{B \sum_{i \in[N]} {\sigma_i}}
	\e^{\frac{1}{2}\sinh\left(\beta\right)\sum_{i,j \in[N]}p_{ij}\sigma_i \sigma_j 
	+ O\left(\sum_{i,j \in[N]}p_{ij}^2\sigma_i \sigma_j\right)}.\nn
	}
where $G_{2}(\beta)=G(\beta)G_{1}(\beta)$. To control the error in the exponent, we use $p_{ij}\leq w_iw_j/\ell_{\sss N}$ and 
the assumptions in Condition \ref{cond-WR-GRG}, to obtain
%
	\begin{equation*}
	\Big|\sum_{i,j \in[N]}p_{ij}^2\sigma_i \sigma_j\Big|\leq  \sum_{i,j \in[N]}\Big(\frac{w_i w_j}{\ell_{\sss N}}\Big)^2
	\, = \, \left(\frac{\sum_{i\in[N]} w_i^2}{\ell_{\sss N}}\right)^2 \, = \, o(N).
	\end{equation*}
Then,
	\begin{align*} 
	\q \left(Z_{\sss N}( \beta, B)\right) \, 
	&= \,G_2(\beta) \e^{o(N)}
	\sum_{\sigma \in \Omega_{\sss N}}\e^{B \sum_{i \in[N]} {\sigma_i}}\e^{\frac{1}{2}\sinh(\beta)
	\sum_{i,j \in[N]}\frac{w_i w_j}{\ell_{\sss N}}\sigma_i \sigma_j}\\
	&= \, G_2(\beta) \e^{o(N)}\sum_{\sigma \in \Omega_{\sss N}}
	\e^{B \sum_{i \in[N]} {\sigma_i}}\e^{\frac{1}{2}\frac{\sinh(\beta)}{\ell_{\sss N}}\left(\sum_{i\in[N]}w_i \sigma_i\right)^2}.
	\end{align*}
When $w_i\equiv w$ for all $i$, so that $\GRGw$ is the Erd\H{o}s-R\'enyi random graph, we retrieve the Curie-Weiss model at inverse temperature $\beta' = \sinh(\beta) w$.
In our inhomogeneous setting, we obtain an inhomogeneous Curie-Weiss model that we will analyze next. \\

\paragraph{Analysis of the inhomogeneous Curie-Weiss model.}
We use the Hubbard-Stratonovich identity, i.e., we write $\e^{t^2/2} = \mathbb{E}[\e^{tZ}]$, with $Z$ standard Gaussian. Then, we find
	\begin{align*}
	\q \left(Z_{\sss N}(\beta, B)\right) \, 
	&= \,G_{2}(\beta) \e^{o(N)}\sum_{\sigma \in \Omega_{\sss N}}
	\e^{B \sum_{i \in[N]} {\sigma_i}}
	\mathbb{E} \Big[\e^{\sqrt{\frac{\sinh\left(\beta\right)}{\ell_{\sss N}}}
	\left(\sum_{i\in[N]}w_i \sigma_i\right) Z}\Big] \\
	&= \,G_{2}(\beta) \e^{o(N)} 2^N 
	\mathbb{E} \Big[\prod_{i=1}^N \cosh \Big(\sqrt{\frac{\sinh\left(\beta\right)}{\ell_{\sss N}}}w_i Z + B\Big)\Big]\\
	&= \,G_{2}(\beta) \e^{o(N)} 2^N 
	\mathbb{E} \Big[\exp\Big\{\sum_{i=1}^N \log \cosh \Big(\sqrt{\frac{\sinh(\beta)}{\ell_{\sss N}}}w_i Z + B\Big)\Big\}\Big].
	\end{align*}
We rewrite the sum in the exponential, using the fact that $W_{\sss N}=w_{I_{\sss N}}$, where we recall that $I_{\sss N}$ is a uniform vertex in $[N]$, to obtain
	\begin{align*}
	\q \left(Z_{\sss N}(\beta, B)\right)
	&=G_{2}(\beta) \e^{o(N)} 2^N 
	\mathbb{E} \Big[\hspace{-0.05cm}\exp{\Big\{\hspace{-0.05cm}N \mathbb{E} \Big[\hspace{-0.05cm}\log \cosh\hspace{-0.08cm}\Big(\sqrt{\frac{\sinh(\beta)}{N \mathbb{E}[W_{\sss N}]}}W_{\sss N} Z + B\Big)
	\Big| Z \Big]\hspace{-0.05cm}\Big\}}\hspace{-0.05cm}\Big]\\ 
	&=G_{2}(\beta) \e^{o(N)} 2^N 
	\mathbb{E} \Big[\e^{N F_{\sss N}\left(\frac{Z}{\sqrt{N}}\right)}\Big],
	\end{align*}
where
	\be\label{FN-def}
	F_{\sss N}(z) \, = \,  \mathbb{E}\Big[\log \cosh 
	\Big(\sqrt{\frac{\sinh\left(\beta\right)}{\mathbb{E}\left[W_{\sss N}\right]}}W_{\sss N} z + B\Big)\Big].
	\ee
Here we emphasize the fact that in \eqref{FN-def}, the expectation is w.r.t.\ $W_{\sss N}$ only.\\We continue by analyzing $F_{\sss N}(z)$. We claim that, uniformly for $|z|\leq a$ and any $a<\infty$,
	\be\label{equicont-AA}
	\sup_{|z|\leq a} |F_{\sss N}(z)-F(z)|=o(1),
	\ee
where 
	\begin{equation*}
	F(z) \, = \,  \mathbb{E}\Big[\log \cosh 
	\Big(\sqrt{\frac{\sinh\left(\beta\right)}{\mathbb{E}[W]}}W z + B\Big)\Big].
	\end{equation*}
To see \eqref{equicont-AA}, we note that $F_{\sss N}(z)\rightarrow F(z)$ for every $z$ fixed by Condition \ref{cond-WR-GRG}(a)-(b), and the fact that $\log\cosh(x)\leq |x|$. Further,
	\begin{equation*}
	|F_{\sss N}'(z)|\leq 
	\frac{\sinh(\beta)}{\mathbb{E}[W_{\sss N}]}\mathbb{E}\Big[\tanh 
	\Big(\sqrt{\frac{\sinh\left(\beta\right)}{\mathbb{E}[W_{\sss N}]}}W_{\sss N} z + B\Big)W_{\sss N}\Big]
	\leq \sinh(\beta),
	\end{equation*}
since $\tanh(x)\leq 1$ for all $x$, so that $|F_{\sss N}'(z)|$ is uniformly bounded in $N$ and $z$. Therefore, $(F_{\sss N})_{N\geq 1}$ forms a uniformly equicontinuous family of functions, so that \eqref{equicont-AA} follows from Arzel\`a-Ascoli. Since $F_{\sss N}(z)\leq \sinh(\beta)|z|$, it further follows that, for $a > 4 \sinh(\beta)$,
	\begin{align*}
	\mathbb{E} \left[\e^{N F_{\sss N}\left(\frac{Z}{\sqrt{N}}\right)}\indic{|Z|>a\sqrt{N}}\right]
	&\leq \mathbb{E} \left[\e^{\sqrt{N}\sinh(\beta)|Z|}\indic{|Z|>a\sqrt{N}}\right]\\
	&=2\mathbb{E} \left[\e^{\sqrt{N}\sinh(\beta)Z}\indic{Z>a\sqrt{N}}\right]\\
	&=\frac{2}{\sqrt{2\pi}}\int_{a\sqrt{N}}^{\infty} \e^{\sqrt{N}\sinh(\beta)z}\e^{-z^2/2}dz\\
	&\leq \e^{a\sinh(\beta)N-a^2N/2} \int_0^{\infty} \e^{\sqrt{N}(\sinh(\beta)-a)x}dx \,\leq \,\e^{-a^2N/4},\nonumber
	\end{align*}
which, for $a$ sufficiently large, is negligible compared to $\mathbb{E} \left[\e^{N F_{\sss N}\left(\frac{Z}{\sqrt{N}}\right)}\indic{|Z|\leq a\sqrt{N}}\right]$. We conclude that
	\begin{align}
		\label{LD-form}
	\q \left(Z_{\sss N} \left( \beta, B \right)\right) \, 
	&= \,G_{2}(\beta) \e^{o(N)} 2^N 
	\mathbb{E} \Big[\e^{N F\left(\frac{Z}{\sqrt{N}}\right)}\Big](1+o(1)).
	\end{align}

\paragraph{A large deviation analysis.}
The expectation in \eqref{LD-form} is an expectation of an exponential functional, to which we apply large deviation machinery.
The Gaussian variable  $Z/\sqrt{N}$ satisfies a large deviation principle with rate function $I(z)=z^2/2$ and speed $N$, because $Z/\sqrt{N} \stackrel{d}{=} \frac{1}{N}\left(Z_1+...+Z_{\sss N}\right)$, where $(Z_i)_{i\in[N]}$ are i.i.d.\ standard Gaussian variables. 
Using Varadhan's Lemma and the fact that $z\mapsto F(z)$ is continuous, we calculate the thermodynamic limit of the pressure as
	\begin{align} 
	\label{limannealedp}\nn
	\lim_{N \rightarrow \infty}\, 
	\frac{1}{N} \log \q \left(Z_{\sss N} \left( \beta, B \right)\right)\, 
	&= \, \log 2 + \lim_{N \rightarrow \infty}\frac{1}{N} \log G_2 (\beta) + \sup_{z}\left[F(z) - I(z)\right] \\ 
	&= \, \log 2 + \alpha \left(\beta\right) \\
	&\, \; + \sup_{z}\Big[\mathbb{E}\Big[\log \cosh \Big(\sqrt{\frac{\sinh(\beta)}{\mathbb{E}\left[W\right]}}W z + B\Big)\Big] - \frac{z^2}{2}\Big].\nn
	\end{align}
where $\alpha \left(\beta\right) = \lim_{N \rightarrow \infty}\frac{1}{N} \log G_2 \left(\beta\right)$. The equation that defines the 
supremum is
	\be\label{fixpGRG}
	z^*=z^*(\beta,B) \, = \, 
	\mathbb{E}\Big[\tanh \Big(\sqrt{\frac{\sinh\left(\beta\right)}{\mathbb{E}[W]}}W z^* + B\Big) 
	\sqrt{\frac{\sinh\left(\beta\right)}{\mathbb{E}[W]}}\,W\Big],
	\ee
and the annealed pressure is obtained by substituting the 
supremum point  $z^{*}$ in the right hand side of  (\ref{limannealedp})  as
	\be
	\label{annealedpGRG}
	\tilde{\psi}(\beta,B)=  \, \log 2 + \alpha(\beta) 
	+ \mathbb{E}\Big[\log \cosh \Big(\sqrt{\frac{\sinh\left(\beta\right)}{\mathbb{E}[W]}}W z^{*}(\beta,B) + B\Big)\Big] - z^{*}(\beta,B)^2/2.
	\ee
This completes the proof of Theorem \ref{term_lim_annealed}(i).\\

\paragraph{The critical inverse temperature.}
To identify $\beta_c^{\mathrm{an}}$ as stated in Theorem \ref{term_lim_annealed}(ii), we evaluate \eqref{fixpGRG} when $B \searrow 0$ to obtain
	\be
	\label{eq_max}
	z^* \, = H(z^*)\, \qquad \text{where} 
	\qquad H(z)=\mathbb{E}\left[\tanh \left(\sqrt{\frac{\sinh\left(\beta\right)}{\mathbb{E}\left[W\right]}}W z\right)
		\sqrt{\frac{\sinh\left(\beta\right)}{\mathbb{E}\left[W\right]}}\,W\right].
	\ee
We investigate the solutions of $z^* \, = H(z^*)$ in \eqref{eq_max}. 
We note that $z\mapsto H(z)$ is an increasing and concave function in $[0, \infty)$. When $H'(0) > 1$, we have three solutions of \eqref{eq_max}, i.e., $\pm z^*$ and 0, where $z^*=z^*(\beta,0^+)>0$. When $H'(0) \leq 1$, instead, $z^*=0$ is the only solution. This leads us to compute that
	\begin{equation*}
	H'(0) 
	\, = \, \sinh\left(\beta\right) 	\frac{\mathbb{E}\left[W^2\right]}{\mathbb{E}\left[W\right]} \, = \, \sinh\left(\beta\right) \nu.
	\end{equation*}
Thus, the annealed critical temperature $\beta_c^{\mathrm{an}}$ satisfies $\sinh\left(\beta_c^{\mathrm{an}}\right) \, = \, 1/\nu$. Since $\tanh\left(\beta_c^{\mathrm{qu}}\right)=1/\nu$, and $\tanh(x) < \sinh(x)$  $\, \forall \, x > 0$,
we obtain $\beta_c^{\mathrm{qu}} \, > \, \beta_c^{\mathrm{an}}$, unless when $\nu=\infty$, in which case $\beta_c^{\mathrm{an}}=\beta_c^{\mathrm{qu}}=0$.\\

\paragraph{Thermodynamic limit of the magnetization.}
To prove the existence of the magnetization in the thermodynamic limit stated in Theorem \ref{term_lim_annealed}(ii), we follow the strategy used in \cite{DGH}. We use the following lemma:

\begin{lemma}\label{limit_deriv}
Let $(f_n)_{n\geq1}$ be a sequence of functions that are twice differentiable in $x$. Assume that
\begin{itemize}
\item[(a)] $\lim_{n\rightarrow \infty} f_n(x) = f(x)$ for some function $y \mapsto f(y)$ that is differentiable in $x$;
\item[(b)] $\frac{d}{dx}f_n(x)$ is monotone in $\left[x-h, x+h\right]$ for all $n\geq 1$ and some $h>0$.
\end{itemize}
Then,
	\begin{equation*}
	\lim_{n\rightarrow \infty}\frac{d}{dx} f_n(x) = \frac{d}{dx} f(x).
	\end{equation*}
\end{lemma}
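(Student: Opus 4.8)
The plan is to trap $f_n'(x)$ between two difference quotients of $f_n$ and then to pass to the limit twice, first in $n$ and then in the increment. Fix $\delta$ with $0<\delta\le h$. Applying the mean value theorem to $f_n$ on the intervals $[x-\delta,x]$ and $[x,x+\delta]$ produces points $\xi_n^-\in(x-\delta,x)$ and $\xi_n^+\in(x,x+\delta)$ with $f_n'(\xi_n^-)=\frac{f_n(x)-f_n(x-\delta)}{\delta}$ and $f_n'(\xi_n^+)=\frac{f_n(x+\delta)-f_n(x)}{\delta}$. Since by hypothesis (b) the derivative $f_n'$ is monotone on $[x-h,x+h]\supseteq[x-\delta,x+\delta]$ and $\xi_n^-<x<\xi_n^+$, the value $f_n'(x)$ must lie between $f_n'(\xi_n^-)$ and $f_n'(\xi_n^+)$. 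Writing $A_n(\delta)=\frac{f_n(x)-f_n(x-\delta)}{\delta}$ and $B_n(\delta)=\frac{f_n(x+\delta)-f_n(x)}{\delta}$, this reads $\min\{A_n(\delta),B_n(\delta)\}\le f_n'(x)\le\max\{A_n(\delta),B_n(\delta)\}$ for every $n$.

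First I would let $n\to\infty$. By the pointwise convergence in (a), evaluated at the three points $x-\delta$, $x$ and $x+\delta$, we have $A_n(\delta)\to A(\delta):=\frac{f(x)-f(x-\delta)}{\delta}$ and $B_n(\delta)\to B(\delta):=\frac{f(x+\delta)-f(x)}{\delta}$, whence $\min\{A(\delta),B(\delta)\}\le\liminf_{n\to\infty}f_n'(x)\le\limsup_{n\to\infty}f_n'(x)\le\max\{A(\delta),B(\delta)\}$. Then I would let $\delta\downarrow0$: since $f$ is differentiable at $x$ by (a), both $A(\delta)$ and $B(\delta)$ converge to $f'(x)$, so the outer bounds collapse and the squeeze yields $\lim_{n\to\infty}f_n'(x)=f'(x)$, as claimed.

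The only point requiring a little care is that hypothesis (b) does not fix a common direction of monotonicity: $f_n'$ may be nondecreasing for some $n$ and nonincreasing for others. This is precisely why I would phrase the trap symmetrically with $\min$ and $\max$ rather than as a single two-sided inequality, so that no case distinction on the sign of the monotonicity is needed and the passage to the limit in $n$ goes through verbatim. The other implicit requirement is that the pointwise convergence in (a) hold not only at $x$ but at the neighbouring points $x\pm\delta$ for all small $\delta$; this is the natural reading of (a) and is all that the argument consumes. Beyond these bookkeeping matters the lemma is soft, relying only on the mean value theorem, the local monotonicity of the $f_n'$, and the differentiability of the limit $f$ at $x$, so I expect no genuine obstacle.
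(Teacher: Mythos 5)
Your proof is correct. The paper itself gives no proof of this lemma --- it is quoted as a known tool (following the strategy of \cite{DGH}) and applied directly --- so there is nothing to compare against; your argument (trapping $f_n'(x)$ between the two difference quotients via the mean value theorem and the monotonicity of $f_n'$, letting $n\to\infty$ and then $\delta\downarrow 0$) is the standard one, and your two points of care --- using $\min$/$\max$ so that no common direction of monotonicity is needed, and reading hypothesis (a) as pointwise convergence in a neighbourhood of $x$ rather than at $x$ alone --- are exactly the right ones, the latter being genuinely necessary since convergence at the single point $x$ would not suffice.
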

\noindent
We apply Lemma \ref{limit_deriv} with $n=N$ and $f_n$ equal to $B \mapsto  \widetilde{\psi}_{\sss N} (\beta, B)$. We verify the conditions in Lemma \ref{limit_deriv} and start by noting that
	\begin{equation*}
	\widetilde{M}_{\sss N} (\beta, B) \,=\, \widetildep\big(S_{\sss N}/N\big)  
	\,=\, \frac{\partial}{\partial B}\widetilde{\psi}_{\sss N} (\beta, B),
	\end{equation*}
and $\lim_{N\rightarrow \infty}\widetilde{\psi}_{\sss N} (\beta, B) = \widetilde{\psi} (\beta, B)$ by Theorem \ref{term_lim_annealed}(i) with $B \mapsto  \widetilde{M}_{\sss N} (\beta, B)$ 
non-decreasing:
	\begin{equation*}
	\frac{\partial}{\partial B}\widetilde{M}_{\sss N} (\beta, B) 
	\,=\, \frac{1}{N} \left[ \widetildep\left(S_{\sss N}^2 \right)- \widetildep\left(S_{\sss N}\right)^2\right] \, \geq \,0. 
	\end{equation*}
Thus, we can indeed conclude that
	\begin{equation*}
	\widetilde{M}(\beta,B)
	=\lim_{N\rightarrow \infty}\widetilde{M}_{\sss N} (\beta, B)
	\,=\,\lim_{N\rightarrow \infty} \frac{\partial}{\partial B}\widetilde{\psi}_{\sss N} (\beta, B)
	\,=\,\frac{\partial}{\partial B}\widetilde{\psi}(\beta, B).
	\end{equation*}
The limit magnetization $\widetilde{M}(\beta,B)$ can be explicitly computed by taking the derivative of $\widetilde{\psi}(\beta, B),$ (\ref{annealedpGRG}) and using the fixed point equation (\ref{fixpGRG}), to obtain
	\begin{equation*}
	\widetilde{M}(\beta,B)\, 
	= \, \mathbb{E}\left[\tanh \left(\sqrt{\frac{\sinh\left(\beta\right)}{\mathbb{E}\left[W\right]}}W z^* + B\right) \right].
	\end{equation*}

\paragraph{Thermodynamic limit of the susceptibility.}
Finally, the thermodynamic limit of the susceptibility in Theorem \ref{term_lim_annealed}(iv) is proved using Lemma \ref{limit_deriv} by combining Theorem \ref{term_lim_annealed}(ii) and the fact that $B \mapsto  \frac{\partial}{\partial B}\widetilde{M}_{\sss N} (\beta, B)$ is non-increasing by the GHS inequality. Indeed, by the explicit computation in \eqref{explicit-parti-func-ann}, we see that the annealed partition function can be viewed as the partition function of an inhomogeneous Curie-Weiss model, where the field is  homogeneous and the coupling constants depend on the edges. Since such an inhomogeneous Ising model also satisfies the GHS inequality, the same follows for the annealed partition function for $\GRGw$. Therefore,
	\begin{equation*}
	\frac{\partial^2}{\partial B^2}\widetilde{M}_{\sss N} (\beta, B) \,=\, \frac{1}{N}\sum_{i \in [N]} \frac{\partial^2}{\partial B^2} 	\widetildep\left(\sigma_i\right) \,\leq \,0.
	\end{equation*}
\qed

\subsection{Annealed SLLN and CLT: Proofs of Theorems \ref{slln_ann} and \ref{ann_CLT}}
\noindent
With Theorem \ref{term_lim_annealed} in hand, we now have all the hypotheses to prove Theorems \ref{slln_ann} and \ref{ann_CLT} following the strategy used for the random quenched setting in Sections 2.2 and 2.3 of \cite{GGPvdH1} verbatim. Indeed, for the proof of the annealed SLLN, referring to \cite[Section 2.2]{GGPvdH1}, we obtain the existence of the thermodynamic limit of the annealed cumulant generating function
	\begin{equation*}
	\widetilde{c}_{\sss N}(t) \, = \, \frac{1}{N} \log \widetildep \left[\exp\left(t S_{\sss N}\right)\right] 
	\, = \, \widetilde{\psi}_{\sss N} (\beta, B+t) - \widetilde{\psi}_{\sss N} (\beta, B)
	\end{equation*}
by Theorem \ref{term_lim_annealed}(i). Then, from \cite[Theorem II.6.3]{El} 
and Theorem \ref{term_lim_annealed}(ii) we conclude the proof.

To prove the annealed CLT (see \cite[Section 2.3]{GGPvdH1} for the proof of the random quenched CLT) we need the existence in the thermodynamic limit of pressure, magnetization and susceptibility given by Theorem \ref{term_lim_annealed} together with the GHS inequality that is still true in the annealed setting
thanks to the mapping to the inhomogeneous Curie-Weiss model.

\section{Proofs for $\CMNtwo$}
\label{three}

In this section we prove the CLT with respect to the annealed measure for the 2-regular random graph. We start by computing the annealed pressure using the partition functions for the one-dimensional Ising model with periodic boundary conditions.

\subsection{Annealed thermodynamic limits and SLLN: Proofs of Theorems \ref{term_lim_ann_CM2} and \ref{slln_ann-CM2}}
From our previous paper \cite{GGPvdH1}, we remember that  any 2-regular random graph is formed by cycles only. Thus, as in \cite{GGPvdH1}, denoting the  random number of cycles in the graph by  $K^t_{\sss N}$, we can enumerate them in an arbitrary order from 1 to $K^t_{\sss N}$ and call $L_{\sss N}(i)$ the length (i.e., the number of vertices) of the $i$th cycle.  The random variable $K^t_{\sss N}$  has distribution given  by 
	\be\label{kappa_N}
	K^t_{\sss N}= \sum_{j=1}^N I_j,
	\ee
where $I_j$ are independent Bernoulli variables given by
	\be\label{bernI}
	I_j = \mbox{Bern}\,\left(\frac{1}{2N -2j+ 1}\right).
	\ee
See \cite{GGPvdH1} for a proof of this fact.
Since the random  graph splits into (disjoint) cycles, its quenched partition function factorizes into the product of the partition functions of each cycle. Therefore,
	\be\label{zprod}
	Z_{\sss N}(\beta, B) = \prod_{i=1}^{K^t_{\sss N}} Z_{L_{\sss N}(i)}^{\sss (t)} (\beta, B).
	\ee
By \cite[Section 3.1]{GGPvdH1}, we have that the partition function of the one-dimensional Ising model with periodic boundary conditions $Z^{\sss (t)}_{\sss N}$ is given by
\begin{equation}
\label{tre}
Z^{\sss (t)}_{\sss N}(\beta, B) \, = \, \lambda_+^N(\beta, B) + \lambda_-^N(\beta, B),
\end{equation}
where
\be\label{lambda_piu_meno}
\lambda_{\pm}(\beta, B) = \e^{\beta } \left[ \cosh(B)\pm \sqrt{\sinh^2(B)+\e^{-4\beta }}\right]\;,
\ee
so we can write
\begin{equation*}
Z_{L_{\sss N}(i)}^{\sss (t)}(\beta, B) = \lambda_+^{L_{\sss N}(i)}(\beta, B) + \lambda_-^{L_{\sss N}(i)}(\beta, B).
\end{equation*}
Because $\beta > 0$, we have $0 < \lambda_-(\beta, B) < \lambda_+(\beta, B)$, so that, for every $i$,
\begin{equation*}
\lambda_+^{L_{\sss N}(i)}(\beta, B) \leq Z_{L_{\sss N}(i)}^{\sss (t)}(\beta, B) \leq 2 \lambda_+^{L_{\sss N}(i)}(\beta, B).
\end{equation*}
As a result, we can bound the the pressure as follows:
\begin{equation*}
\prod_{i=1}^{K^t_{\sss N}} \lambda_+^{L_{\sss N}(i)}(\beta, B) \leq \prod_{i=1}^{K^t_{\sss N}} Z_{L_{\sss N}(i)}^{\sss (t)}(\beta, B) \leq \prod_{i=1}^{K^t_{\sss N}} 2 \lambda_+^{L_{\sss N}(i)}(\beta, B),
\end{equation*}
and, since $\sum_{i=1}^{K^t_{\sss N}}L_{\sss N}(i) = N$, we finally obtain
\be\label{Bound}
\lambda_+^N(\beta, B) \leq Z_{\sss N}(\beta, B) \leq 2^{K^t_{\sss N}} \lambda_+^N(\beta, B).
\ee

\noindent
The  thermodynamic limit  of the annealed pressure $\widetilde{\psi}_{\sss N} (\beta, B)$,  defined in \eqref{def-annealed-press}, can be computed along the same lines of the averaged quenched one in \cite{GGPvdH1}. Indeed, 
by applying the monotone operator $N^{-1}\log (Q_{\sss N}(\cdot))$ to \eqref{Bound} and using the fact that $\lambda_{+}(\beta, B)$ is non random, we obtain
	\begin{equation*}
	\log \lambda_+(\beta, B) \leq \widetilde{\psi}_{\sss N} (\beta, B) 
	\leq \frac{1}{N} \log \left(\q \left(2^{K^t_{\sss N}}\right)\right) + \log \lambda_+(\beta, B).
	\end{equation*}
Now using the fact 
	\begin{align}
	\label{dueallaK}\nonumber
	\frac{1}{N} \log \left(\q \left(2^{K^t_{\sss N}}\right)\right) 
	&= \frac{1}{N} \log \prod_{i=1}^N \q \left(2^{I_i}\right)\\ \nn
	&= \, \frac{1}{N} \log 
	\prod_{i=1}^N \left[ \frac{2}{2N -2i +1} + \left(1 - \frac{1}{2N -2i +1} \right)\right] & \\ 
	& = \, \frac{1}{N} \sum_{i=1}^N \log \left(1 + \frac{1}{2N -2i +1} \right) 
	\stackrel{N \rightarrow \infty}{\longrightarrow} 0,
	\end{align}
we conclude that the annealed pressure of $\CMNtwo$ coincides with the pressure of the one-dimensional Ising model $\psi^{d=1} (\beta, B)$, i.e.,
	\be\label{espr_ann_press_CM2}
	\widetilde{\psi} (\beta, B)\,=\, \psi^{d=1} (\beta, B) \, \equiv \,\log \lambda_+(\beta, B).
	\ee
Moreover, it also agrees with the averaged and random quenched pressures  \cite{GGPvdH1}, i.e.,
	\begin{equation*}
	\widetilde{\psi} (\beta, B) =\overline{\psi} (\beta, B) = {\psi} (\beta, B),
	\end{equation*}
where
	$$
	\overline{\psi} (\beta, B) := \lim_{N\to\infty} \frac{1}{N} \q (\ln Z_N(\beta,B))
	\qquad
	\text{and}
	\qquad
	{\psi} (\beta, B) := \lim_{N\to\infty} \frac{1}{N}\ln Z_N(\beta,B)\;.
	$$
It also straightforwardly follows that the annealed cumulant generating function of $\CMNtwo$ coincides with the random and averaged quenched ones  \cite{GGPvdH1} i.e.,
	\be\label{tre_c}
	\widetilde{c}(t)=\overline{c}(t) =c(t)= \log \lambda_+(\beta, B+t) - \log \lambda_+(\beta, B).
	\ee
The existence of the magnetization in the thermodynamic limit (Theorem  \ref{term_lim_ann_CM2}(ii)) can be proved, as in the previous section, using Lemma \ref{limit_deriv} and the existence of the thermodynamic limit of the pressure (\ref{espr_ann_press_CM2}), so we obtain
	\begin{equation*}
	\widetilde{M}(\beta,B) \,=\, \frac{\partial}{\partial B}\widetilde{\psi}(\beta, B) 
	\,=\, \frac{\sinh(B)}{\sqrt{\sinh^2(B) + \e^{-4\beta}}},
	\end{equation*}
as required.\qed
\medskip

\noindent
{\it Proof of Theorem \ref{slln_ann-CM2}:} The proof, as for Theorem \ref{slln_ann}, follows immediately from the existence of the annealed pressure in the thermodynamic limit and its differentiability with respect to $B$. See also \cite[Section 2.2]{GGPvdH1}.
\qed

\subsection{Annealed CLT: Proof of Theorem \ref{CLT_annealed1}}
To prove the CLT in the annealed setting, we follow the strategy used in  \cite{GGPvdH1}  for the averaged quenched CLT. \\

\paragraph{Rewrite in terms of cumulant generating functions.}
Using the annealed cumulant generating function and using a Taylor expansion, we write
	\begin{align}\label{cumgfcm2cder2}
		\log \widetildep  \left[\exp\left( \frac{t S_{\sss N} - t \widetildep(S_{\sss N})}{\sqrt{N}}\right) \right] 
		\, = \: \frac{t^2}{2} \widetilde{c}''_{\sss N}(t_{\sss N}),
	\end{align}
where $t_{\sss N} \in [0 , t/\sqrt{N}]$. Then the aim is to prove that  $\lim_{N \rightarrow \infty} \, \widetilde{c}''_{\sss N}(t_{\sss N})$ exists as a finite limit. 

By expressing $\widetilde{c}_{\sss N}(t)$ in terms of $Z^{\sss (t)}_{N}=\lambda_{+}^{N}+\lambda_{-}^{N}$ and using \eqref{tre_c}, we can compute the difference as
	\begin{equation*}
		\widetilde{c}_{\sss N}(t) - \widetilde{c}(t) = \frac{1}{N} \log \hspace{-0.05cm} \left[ \frac{\q \hspace{-0.05cm}\left(\frac{Z^{\sss (t)}_{\sss N}(\beta, B+t)}
		{\left(\lambda_+(\beta, B+t)\right)^N}\right)}
		{\q \hspace{-0.05cm}\left(\frac{Z^{\sss (t)}_{\sss N}(\beta, B)}{\left(\lambda_+(\beta, B)\right)^N}\right)}\right] \hspace{-0.1cm}
		= \frac{1}{N} \log \hspace{-0.05cm} \left[ \frac{\q \hspace{-0.05cm}\left(\prod_{i=1}^{K^t_{\sss N}} 
		\left(1 + (r_{B+t})^{\sss L_{\sss N}(i)}\right)\right)}{\q \hspace{-0.05cm} \left(\prod_{i=1}^{K^t_{\sss N}}
		\left(1 + (r_B)^{\sss L_{\sss N}(i)}\right)\right)} \right],
	\end{equation*}
where, as in \cite{GGPvdH1}, we have defined
	\be\label{defalpha}
	r_{\sss B} \, = \, r(\beta, B) \, = \, \frac{\lambda_-(\beta, B)}{\lambda_+(\beta, B)}\;. 
	\ee
Then
	\be \label{c_diff_ann}
	\widetilde{c}_{\sss N}(t) 
	= \log \lambda_+(\beta, B+t) - \log \lambda_+(\beta, B) 
	+ \frac{1}{N} \log \left[ \frac{\q\left(\prod_{i=1}^{K^t_{\sss N}} \left(1 + (r_{B+t})^{\sss L_{\sss N}(i)}\right)\right)}
	{\q \left(\prod_{i=1}^{K^t_{\sss N}} \left(1 +(r_B)^{\sss L_{\sss N}(i)}\right)\right)} \right]. 
	\ee
Our aim is to show that the double derivative arises from the first term only, the second derivative of the last term vanishes.\\

\paragraph{Computation of the second derivative of the cumulant generating function.}
The second derivative of (\ref{c_diff_ann}) is
	\be
	\label{der2ctildalim}
	\widetilde{c}''_{\sss N}(t) 
	\,=\, \frac{\partial^2}{\partial t^2} \log \lambda_+(\beta, B+t) 
	\,+\, \frac{1}{N\widetilde{D}_{\sss N}(t)}\left[\widetilde{I}_{\sss N}(t) + \widetilde{II}_{\sss N}(t) + 	\frac{\widetilde{III}_{\sss N}(t)}{\widetilde{D}_{\sss N}(t)}\right],
	\ee
where
	\begin{align*}
	\widetilde{I}_{\sss N}(t) 
	&= \q \Big[ \displaystyle \sum_{i=1}^{K^t_{\sss N}}L_{\sss N}(i)(L_{\sss N}(i)-1)(r_{\sss B+t})^{\sss L_{\sss N}(i)-2}
	(r'_{\sss B+t})^2 \\
	& \quad \quad \quad \quad \quad \; + L_{\sss N}(i)(r_{\sss B+t})^{\sss L_{\sss N}(i)-1} r''_{\sss B+t}
	\prod \limits_{\substack{j=1 \\ j\neq i}}^{K^t_{\sss N}}\left(1 + (r_{\sss B+t})^{\sss L_{\sss N}(j)}\right) \Big],\\ 
	\widetilde{II}_{\sss N}(t)  
	&=  \q \Big[ \displaystyle \sum_{i=1}^{K^t_{\sss N}}\sum \limits_{\substack{j=1 \\ j\neq i}}^{K^t_{\sss N}}
	L_{\sss N}(i)L_{\sss N}(j) (r_{\sss B+t})^{\sss L_{\sss N}(i)+L_{\sss N}(j)-2}(r'_{\sss B+t})^2 
	\prod \limits_{\substack{l=1 \\ l\neq i,j}}^{K^t_{\sss N}}\left(1 + (r_{\sss B+t})^{\sss L_{\sss N}(l)} \right)\Big],\\ 
	\widetilde{III}_{\sss N} (t) 
	&=  \Big[\q \Big( \displaystyle 
	\sum_{i=1}^{K^t_{\sss N}}L_{\sss N}(i)(r_{\sss B+t})^{\sss L_{\sss N}(i)-1} r'_{\sss B+t}
	\prod \limits_{\substack{j=1 \\ j\neq i}}^{K^t_{\sss N}}\left(1 + (r_{\sss B+t})^{\sss L_{\sss N}(j)}\right)\Big)\Big]^2\; ,\\ 
	\widetilde{D}_{\sss N}(t) 
	&=  \q\Big[\displaystyle \prod_{i=1}^{K^t_{\sss N}}\left(1 + (r_{\sss B+t})^{\sss L_{\sss N}(i)}\right)\Big]. 
\end{align*}
	
\paragraph{Uniform bound of the averaged normalized partition function.}
To analyze the contributions above we show that the averaged normalized partition
function of $\CMtwo{N}$ is uniformly bounded:
\begin{lemma}[The partition function on tori]
\label{lem-part-function-tori}
For every $\gamma<1$ and $\alpha\in(0,\infty)$, there exists a constant $A=A(\alpha,\gamma)$ such that, uniformly in $N$,
	\begin{equation*}
	\q\big[\prod_{i=1}^{K_N^t} \big(1+\alpha \gamma^{L_N(i)}\big)\big]\leq A.
	\end{equation*}
\end{lemma}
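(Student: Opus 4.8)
The plan is to expand the product over the cycles into a weighted sum over \emph{subsets} of cycles, to recognise the resulting expectations as weighted factorial moments of the cycle counts of the uniform half-edge pairing, and to evaluate these moments exactly by a short matching count. Throughout I write $s:=\ell_1+\cdots+\ell_m$ and work in the relevant range $0<\gamma<1$.

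\emph{Step 1 (expansion).} For a fixed realization of $\CMNtwo$ with cycles of lengths $L_{\sss N}(1),\dots,L_{\sss N}(K^t_{\sss N})$ I would use $\prod_i(1+x_i)=\sum_m e_m$ to write
\[
\prod_{i=1}^{K^t_{\sss N}}\bigl(1+\alpha\gamma^{L_{\sss N}(i)}\bigr)
=\sum_{m\ge0}\alpha^m\sum_{\{i_1,\dots,i_m\}}\gamma^{L_{\sss N}(i_1)+\cdots+L_{\sss N}(i_m)},
\]
the inner sum running over unordered $m$-subsets of distinct cycles. Taking $\q$ and using that distinct cycles are distinguished by their vertex sets,
\[
\q\Big[\prod_{i=1}^{K^t_{\sss N}}\bigl(1+\alpha\gamma^{L_{\sss N}(i)}\bigr)\Big]
=\sum_{m\ge0}\frac{\alpha^m}{m!}\sum_{\ell_1,\dots,\ell_m\ge1}A_{\sss N}(\ell_1,\dots,\ell_m)\,\gamma^{s},
\]
where $A_{\sss N}(\ell_1,\dots,\ell_m)$ is the expected number of \emph{ordered} $m$-tuples of vertex-disjoint cycles of lengths $\ell_1,\dots,\ell_m$.

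\emph{Step 2 (exact moment).} Since $\CMNtwo$ is a uniform pairing of the $2N$ half-edges, I would compute $A_{\sss N}$ directly. The number of pairings of the $2\ell$ half-edges of $\ell$ prescribed vertices that form a single cycle through all of them equals $2^{\ell-1}(\ell-1)!$; choosing and ordering the $s$ vertices, forming the cycles, and pairing the remaining $2(N-s)$ half-edges freely gives
\[
A_{\sss N}(\ell_1,\dots,\ell_m)=\frac{N!}{(N-s)!\prod_j\ell_j!}\ \prod_{j=1}^m 2^{\ell_j-1}(\ell_j-1)!\ \frac{(2(N-s)-1)!!}{(2N-1)!!}
=\Big(\prod_{r=0}^{s-1}\frac{N-r}{2(N-r)-1}\Big)\frac{2^{s}}{2^m\prod_j\ell_j}.
\]

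\emph{Step 3 (summation).} Writing $\prod_{r=0}^{s-1}\frac{N-r}{2(N-r)-1}=2^{-s}\prod_{r=0}^{s-1}\bigl(1+\tfrac1{2(N-r)-1}\bigr)$, for $s\le N/2$ the exponent obeys $\sum_{r=0}^{s-1}\frac1{2(N-r)-1}\le\frac{s}{2(N-s)+1}\le\frac12$, so the product is at most $e^{1/2}2^{-s}$. Hence the part of the sum with $s\le N/2$ is bounded by
\[
e^{1/2}\sum_{m\ge0}\frac{\alpha^m}{m!\,2^m}\Big(\sum_{\ell\ge1}\frac{\gamma^\ell}{\ell}\Big)^m
=e^{1/2}\exp\Big(\frac{\alpha}{2}\log\frac1{1-\gamma}\Big)=e^{1/2}(1-\gamma)^{-\alpha/2},
\]
a finite constant depending only on $(\alpha,\gamma)$. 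For $s>N/2$ I would use the crude Wallis-type bound $\prod_{r=0}^{s-1}\frac{N-r}{2(N-r)-1}\le 2^{-s}\prod_{q=1}^{N}\frac{2q}{2q-1}\le C\sqrt N\,2^{-s}$ together with $\gamma^{s}\le\gamma^{N/4}\gamma^{s/2}$, which bounds this remaining part by $C\sqrt N\,\gamma^{N/4}(1-\sqrt\gamma)^{-\alpha/2}\to0$; in particular it is bounded uniformly in $N$. Adding the two pieces yields a bound $A(\alpha,\gamma)$ independent of $N$, as claimed.

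\emph{Main obstacle.} The delicate point is Step 2: the matching count $2^{\ell-1}(\ell-1)!$ for a single cycle (hence the clean factor $2^\ell/(2\ell)$) must be established carefully, including the degenerate cycles of length $1$ (self-loops) and $2$ (double edges), and the product $\prod_{r=0}^{s-1}\frac{N-r}{2(N-r)-1}$ must be controlled \emph{uniformly} up to $s=N$. The large-$s$ regime is precisely where the convergent series $\sum_\ell \gamma^\ell/\ell$ does not by itself suffice and the separate Wallis estimate is needed.
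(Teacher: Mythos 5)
Your proposal is correct, but it proves the lemma by a genuinely different route than the paper. The paper argues by induction on $N$: writing ${\cal Z}_N=\q\big[\prod_{i}(1+\alpha\gamma^{L_N(i)})\big]$, it conditions on the length of the first cycle to get the recursion ${\cal Z}_N=\sum_l\q(L_N(1)=l)(1+\alpha\gamma^l){\cal Z}_{N-l}$, and advances the hypothesis ${\cal Z}_N\le A(1-\tfrac{1}{2\sqrt[3]{N+1}})$ using $\sum_l\q(L_N(1)=l)\gamma^l\le c/(N+1)$ together with a lower bound on $\sum_l\q(L_N(1)=l)(N-l+1)^{-1/3}$ obtained from the weak limit $L_N(1)/N\convd T$ and Fatou's lemma. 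You instead expand the product into elementary symmetric functions, identify the resulting expectations as weighted factorial moments of the cycle counts, and evaluate them exactly by counting matchings; your cycle count $2^{\ell-1}(\ell-1)!$ is right (including $\ell=1,2$), the simplification of $A_N$ checks out, and the split at $s=N/2$ with the Wallis-type estimate handles the uniformity issue you correctly flag as the delicate point. Your computation buys strictly more than the paper's: it yields an explicit admissible constant $e^{1/2}(1-\gamma)^{-\alpha/2}$ plus a vanishing remainder, and in fact exhibits the limit $(1-\gamma)^{-\alpha/2}$, consistent with the Poisson limit of the cycle counts with means $\lambda_\ell=1/(2\ell)$ that the paper derives separately in its Lemma on the number of vertices in tori. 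The paper's induction is softer and shorter but gives only boundedness with an inexplicit constant. One minor caveat: as stated the lemma allows any $\gamma<1$, while both your argument and the paper's implicitly use $\gamma\in(0,1)$; this is the only regime used in the applications (where $\gamma=r_{B+t_N}$), and negative $\gamma$ with $|\gamma|<1$ would follow from your bound applied to $|\gamma|$, so nothing is lost.
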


\proof Denote ${\cal Z}_N=\q\big[\prod_{i=1}^{K_N^t} \big(1+\alpha \gamma^{L_N(i)}\big)\big]$.
For the proof we use induction in $N$. The induction hypothesis is that there exists an $A>1$ such that
	\eqn{
	\label{IH-Zn}
	{\cal Z}_N\leq A\big(1-\frac{1}{2\sqrt[3]{N+1}}\big).
	}
Fix $M\geq 1$ large. We note that we can fix $A$ so large that the inequality is trivially satisfied for $N\leq M$. 
To advance the induction hypothesis we first derive a recursion relation for ${\cal Z}_N$.
We have	
\begin{align}\label{induct_alpha}
		{\cal Z}_ N\, &= \, \sum_{l=1}^N 
		\q(L_N(1)=l) \; \q \Big(\prod_{i=1}^{K^t_{\sss N}}  
		\left(1 + \alpha \gamma^{\sss L_{\sss N}(i)}\right)\Big| L_N(1)=l\Big) \nn\\
		&= \, \sum_{l=1}^N \q(L_N(1)=l) \left(1 + \alpha \gamma^l\right) {\cal Z}_{\sss N-l}.
	\end{align}
Indeed, the average of $\prod_{i=1}^{K^t_{\sss N}}\left(1 + \alpha \gamma^{\sss L_{\sss N}(i)}\right)$ conditioned on  $L_N(1)=l$, reduces  to the average on a $\CMNtwo$ graph with $N-l$ vertices of a similar product. This average gives rise to the factor ${\cal Z}_{\sss N-l}$ in \eqref{induct_alpha}, while the term corresponding to the first cycle is factorized, being $\left(1 + \alpha \gamma^l\right)$. 	
Substituting the induction hypothesis into \eqref{induct_alpha} leads to
	\begin{align*}
	{\cal Z}_N&\leq A\sum_{l=1}^{N} \q(L_N(1)=l)\big(1+\alpha \gamma^{l}\big)\big(1-\frac{1}{2\sqrt[3]{N-l+1}}\big)\\
	&\leq A\sum_{l=1}^{N} \q(L_N(1)=l)\big(1+\alpha \gamma^{l}\big)
	-A\sum_{l=1}^{N} \q(L_N(1)=l)\frac{1}{2\sqrt[3]{N-l+1}}.
	\end{align*}
It is not hard to see that
	\begin{equation*}
	\sum_{l=1}^{N} \q(L_N(1)=l)\gamma^{l}\leq c/(N+1),
	\end{equation*}
while there exists a constant $\theta>1$ such that
	\eqn{
	\label{theta-bd-cube-root}
	\sum_{l=1}^{N} \q(L_N(1)=l)\frac{1}{\sqrt[3]{N-l+1}}\geq \frac{\theta}{\sqrt[3]{N+1}}.
	}
Indeed, by \cite[Exercise 4.1]{vdHII}, or an explicit computation, $L_N(1)/N\convd T$, where $T$ has density $f_{\sss T}(x)$ given by
	\begin{equation*}
	f_{\sss T}(x)=\frac{1}{2\sqrt{1-x}}.
	\end{equation*}
Therefore, rewriting the sum in \eqref{theta-bd-cube-root} we have: 
	\begin{equation*}
	\sum_{l=1}^{N} \q(L_N(1)=l)\frac{1}{\sqrt[3]{N-l+1}}
	=\frac{1}{\sqrt[3]{N+1}}\q\Big[\frac{1}{\sqrt[3]{1-L_N(1)/(N+1)}}\Big],
	\end{equation*}
and by Fatou's Lemma and weak convergence, we obtain
	\begin{equation*}
	\liminf_{N\rightarrow \infty} \q\Big[\frac{1}{\sqrt[3]{1-L_N(1)/(N+1)}}\Big]\geq\expec\Big[\frac{1}{\sqrt[3]{1-T}}\Big]>1.
	\end{equation*}
Since we can assume that $N\geq M$, which is sufficiently large, we thus obtain \eqref{theta-bd-cube-root}.
Thus,
	\begin{equation*}
	{\cal Z}_N\leq A\Big(1+\frac{c}{N+1}-\frac{\theta}{2\sqrt[3]{N+1}}\Big)\leq A\Big(1-\frac{1}{2\sqrt[3]{N+1}}\Big),
	\end{equation*}
when $N$ is sufficiently large. This advances the induction hypothesis and completes the proof
of the lemma.
\qed
\bigskip

\paragraph{Analysis of the second derivative of the cumulant generating function.}
Armed with Lemma \ref{lem-part-function-tori}, it is now easy to show that all the contributions 
in the second term of the r.h.s.\ of \eqref{der2ctildalim} indeed vanish on a sequence $t_{\sss N} = o(1)$.
To see this, let $t>0$ and $(t_{\sss N})_{N\geq 1}$ a sequence of real numbers  such that $t_{\sss N} \in [0 , t/\sqrt{N}]$. 
%
We consider first the term $\widetilde{I}_{\sss N}(t_{\sss N})$. As in \cite[Lemma 3.1]{GGPvdH1}, there exists a constant $C>0$  such that
	\begin{equation*}
	\sum_{i=1}^{K^t_{\sss N}} \left |L_{\sss N}(i)(L_{\sss N}(i)-1)(r_{\sss B+t_{\sss N}})^{\sss L_{\sss N}(i)-2}(r'_{B+t_{\sss N}})^2  + L_{\sss N}(i)(r_{\sss B+t_{\sss N}})^{\sss L_{\sss N}(i)-1} r''_{B+t_{\sss N}}\right |  \leq C \cdot K^t_{\sss N}, 
	\end{equation*}
since $r_{\sss B+t_{\sss N}}<1$.
Then, using the Cauchy-Schwarz inequality and recalling that $0< r <1$, we obtain
	\begin{align*}
		|\widetilde{I}_{\sss N}(t_{\sss N})| \; 
		&\leq \, C \cdot \q \Big(K^t_{\sss N} \, \prod_{i=1}^{K^t_{\sss N}}
		\left(1 + (r_{\sss B+t_{\sss N}})^{\sss L_{\sss N}(i)}\right)\Big) \\ 
		&\leq \, C \cdot \q\left(\left(K^t_{\sss N}\right)^2\right)^{1/2} \cdot 
		\q \Big(\prod_{i=1}^{K^t_{\sss N}}\left(1 + (r_{\sss B+t_{\sss N}})^{\sss L_{\sss N}(i)}\right)^2\Big)^{1/2} \\ 
		& \leq \, C \q\Big(\left(K^t_{\sss N}\right)^2\Big)^{1/2} \cdot \q \Big(\prod_{i=1}^{K^t_{\sss N}}
		\left(1 + 3(r_{\sss B+t_{\sss N}})^{\sss L_{\sss N}(i)}\right)\Big)^{1/2}.
	\end{align*}
Using Lemma \ref{lem-part-function-tori} with $\alpha = 3$ and $\gamma = r_{\sss B+t_{\sss N}}$
we conclude that
	\begin{equation*}
	\q \Big(\prod_{i=1}^{K^t_{\sss N}}\left(1 + 3(r_{\sss B+t_{\sss N}})^{\sss L_{\sss N}(i)}\right)\Big)^{1/2} 
	\, \leq \, A^{\frac{1}{2}}.
	\end{equation*}
Finally, since $\widetilde{D}_{\sss N}(t_{\sss N}) \geq 1$,
	\begin{equation*}
	\frac{|\widetilde{I}_{\sss N}(t_{\sss N})|}{N \widetilde{D}_{\sss N}(t_{\sss N})} 
	\, \leq \, \frac{C \cdot A^{\frac{1}{2}} \cdot \log{N} }{N} 
	\, \stackrel{N \to \infty}{\longrightarrow}\, 0.
	\end{equation*}
Similar computations allow us to estimate $\widetilde{II}_{\sss N}(t_{\sss N})$ and $\widetilde{III}_{\sss N}(t_{\sss N})$ to obtain
	\begin{equation*}
	\lim_{N \to \infty } \frac{\widetilde{II}_{\sss N}(t_{\sss N})}{N \widetilde{D}_{\sss N}(t_{\sss N})} = 0, 
	\qquad  \lim_{N \to \infty } \frac{\widetilde{III}_{\sss N}(t_{\sss N})}{N \left(\widetilde{D}_{\sss N}(t_{\sss N})\right)^2} = 0\;.
	\end{equation*}

\paragraph{Completion of the proof of Theorem \ref{CLT_annealed1}.}
Having proved that 
	\begin{equation*}
	\lim_{N \rightarrow \infty} \frac{1}{N\widetilde{D}_{\sss N}(t_{\sss N})}
	\left[\widetilde{I}_{\sss N}(t_{\sss N}) + \widetilde{II}_{\sss N}(t_{\sss N}) 
	+ \frac{\widetilde{III}_{\sss N}(t_{\sss N})}{\widetilde{D}_{\sss N}(t_{\sss N})}\right] = 0\;,
	\end{equation*}
the combination of  
\eqref{cumgfcm2cder2} and \eqref{der2ctildalim} yields the proof of the annealed CLT, i.e.,
	\begin{align*}
		\lim_{N \rightarrow \infty} \log \widetildep 
		\Big[\exp\Big(t \frac{S_{\sss N} - \widetildep(S_{\sss N})}{\sqrt{N}}\Big) \Big]   
		&=   \frac{t^2}{2} \left.\frac{\partial^2}{\partial t^2} \log \lambda_+(\beta, B+t)\right|_{t=0}\\
		&= \, \frac{t^2}{2} \frac{\cosh(B) \e^{-4\beta}}{(\sinh(B)+\e^{-4\beta})^{3/2}}.
	\end{align*}
Therefore, we conclude that the annealed CLT has the same variance as in averaged quenched case  \cite{GGPvdH1}, i.e., the variance in both cases is the susceptibility of the one-dimensional Ising model.\qed

\section{Proofs for $\CMNonetwo$}
\label{four}

In this section, we consider the Configuration Model $\CMNonetwo$, introduced in Section 1.2. In this graph, the connected components are either cycles or tori (which we indicate by a superscript $(t)$) connecting vertices of degree $2$, or lines (indicated by a superscript $(l)$) having vertices of degree 2 between two vertices of degree 1. In order to state some properties of the number of lines and tori, we need to introduce some notation. By taking $p \in (0,1)$, let us define the number of vertex of degree 1 and 2 by
	\begin{equation*}
		n_1 \;:=\;  \# \left\{i \in [N] \colon d_i = 1\right\}\,=\, N - \lfloor pN \rfloor,\qquad
		n_2  \;:=\;  \# \, \left\{i \in [N] \colon d_i =2\right\}\,=\, \lfloor pN\rfloor,
	\end{equation*}
and the total degree of the graph by
	\be\label{defln}
	\ell_{\sss N} \; =\; \sum_{i \in [N]} d_{i} \,=\, 2n_2 + n_1 \,=\, N+\lfloor pN\rfloor.
	\ee
Then, the number of edges is given by $\ell_{\sss N}/2$.  Let us also denote by $K_{\sss N}$ the number of connected components in the graph and by $K_{\sss N}^{\sss (l)}$ and $K_{\sss N}^{\sss (t)}$ the number lines and tori. Obviously,
\begin{equation*}
	K_{\sss N}= K_{\sss N}^{\sss (l)} + K_{\sss N}^{\sss (t)}.
	\end{equation*}
Because every line uses up two vertices of degree 1, the number of lines is given by $n_1/2$, i.e., $K_{\sss N}^{\sss (l)} = (N - \lfloor pN \rfloor)/2$ a.s.. Regarding the number of cycles, we have that $K_{\sss N}^{\sss (t)}$ has the same distribution  of $K_{\bar{N}}^t$, where $\overline{N}$ is the (random) number of vertices with degree 2 that do not belong to any line and $K_{\bar{N}}^t$ is the number of tori on this set of vertices.  Then, since this subset forms a $\mathrm{CM}_{\sss \bar{N}}(\mathrm{\textbf{2}})$ graph, we can apply \cite[(3.16) in Section 3.2]{GGPvdH1}, obtaining that $K_{\sss N}^{\sss (t)}/N\stackrel{\mathbb{P}}{\longrightarrow} 0$, so that also
	\begin{equation*}
	K_{\sss N}/N \stackrel{\mathbb{P}}{\longrightarrow}(1-p)/2.
\end{equation*}
\medskip

Denoting the length (i.e. the number of vertices) in the $i$th line and $j$th torus (for an arbitrary labeling) by $L^{\sss (l)}_{\sss N}(i)$ and $L^{\sss (t)}_{\sss N}(j)$, the partition function can be computed as 
	\be\label{zcm12}
	Z_{\sss N}(\beta, B) = \prod_{i=1}^{K_{\sss N}^{\sss (l)}} Z_{L^{\sss (l)}_{\sss N}(i)}^{\sss (l)} (\beta, B) \cdot 	\prod_{i=1}^{K_{\sss N}^{\sss (t)}} Z_{L^{\sss (t)}_{\sss N}(i)}^{\sss (t)} (\beta, B),
	\ee
where, by (\ref{tre}),
	\begin{equation*}
	Z_{L^{\sss (t)}_{\sss N}(i)}^{\sss (t)} (\beta, B) = \lambda_+^{L_{\sss N}^{\sss (t)}(i)} + \lambda_-^{L_{\sss N}^{\sss (t)}(i)},
	\end{equation*}
while the partition function on each line is obtained using the partition function on one-dimensional Ising model with free boundary condition
\cite[Section 3.1]{GGPvdH1}
as
	\begin{equation*}
	Z^{\sss (l)}_{\sss N}  =  A_+ \lambda_+^N + A_- \lambda_-^N,
	\end{equation*}
where
	\be\label{A_piu_meno}
	A_{\pm} = A_{\pm}(\beta, B) 
	= \frac{\e^{-2\beta}\e^{\pm B} + (\lambda_+-\e^{\beta+B})^2 \e^{\mp B} 
	\pm 2 \e^{-\beta} (\lambda_+ - \e^{\beta+B})}
	{[\e^{-2\beta} + (\lambda_+ - \e^{\beta+B})^2 ] \lambda_{\pm}}.
	\ee
This is the starting point of our analysis of the annealed Ising measure on $\CMNonetwo$.

\subsection{Annealed CLT: proof of Theorem \ref{CLT_annealed2}}
\label{sec-ann-CLT-CM12}
In order to prove the CLT in the annealed setting, we will show that
	\eqn{
	\label{CLT-ann-12-aim}
	\lim_{N \rightarrow \infty}\, 
	\widetildep \Big[\exp \Big(\frac{t}{\sqrt{N}} \big(S_{\sss N} - \widetildep(S_{\sss N})\big)\Big)\Big] \, = \, \exp(\sigma_2^2t^2/2), \quad t\in \bbR.
	}
From now on, to alleviate notation we will omit the dependence on $\beta$ and 
abbreviate $B_{\sss N}=B + \frac{t}{\sqrt{N}}$. We start by writing
	\begin{equation*}
	\widetildep \Big[\exp \Big(\frac{t}{\sqrt{N}} S_{\sss N} \Big)\Big ]=\frac{\q [Z_{\sss N}(B_{\sss N})]}{\q [Z_{\sss N}(B)]}
=\, \frac {\q [\e^{NF_{B_{\sss N}} (p^{\sss(N)})+ NE_{\sss N}(B_{\sss N})}]}{\q [\e^{NF_{B} (p^{\sss(N)})+ N E_{\sss N}(B)}]},
	\end{equation*}
where, by \cite{GGPvdH1},
	\eqan{
	\label{f-def}
	F_{B}(p^{\sss(N)})
	&= \log \lambda_+(B)  
	+ \sum_{l \geq 2} p_l^{\sss{(N)}} \log \left(A_+(B) + A_- (B)\left(r(B)\right)^{l}\right),\\
	\label{e-def}
	E_{\sss N}(B)
	&= \frac{1}{N} \sum_{i=1}^{K_{\sss N}^{\sss (t)}} 
	\log \Big(1 + r(B)^{L_{\sss N}^{\sss (t)}(i)} \Big),
	}
with $r(B)=r_B$ defined in \eqref{defalpha} and $p^{\sss(N)} = \left(p_l^{\sss{(N)}}\right)_{l\geq 2}$ the empirical distribution of the lines lengths given by 
	\be\label{def_pelle}
	p_l^{\sss{(N)}} \,:=\,  \frac{1}{N} \sum_{i=1}^{K_{\sss N}^{\sss (l)}} 
	\mathbbm{1}_{\{L_{\sss N}^{\sss (l)}(i) = l\}}.
	\ee
	
\paragraph{Analysis of the annealed partition function.}
We have 
	\eqan{\label{pluto}
	\e^{N F_{B}\left(p^{\sss(N)}\right)}
	&=\left(\lambda_+(B)\right)^N \prod_{l=2}^{\infty}\left(A_+(B) + A_-(B) \left(r(B)\right)^{l}\right)^{N_l }\nn\\
	&=\left(\lambda_+(B)\right)^N  \prod_{l=2}^{\infty} \left(A_{+}(B)\right)^{N_l}\prod_{l=2}^{\infty}\left(1+ a(B)\left(r(B)\right)^{l}\right)^{N_l}.
	}
where 
	\be
	\label{a-def}
	a(B) = \frac{A_-(B)}{A_+(B)}
	\ee	
and $N_l = N p_l^{\sss{(N)}}$ is the number of lines of length $l$. We rewrite the second factor in \eqref{pluto} as
	\begin{equation*}
	\prod_{l=2}^{\infty} \left(A_{+}(B)\right)^{N_l}=\left(A_+(B)\right)^{n_1/2},
	\end{equation*}
since $\sum_{l\geq 2} N_l=n_1/2$. Therefore, we arrive at
	\begin{equation*}
	\e^{N F_{B}\left(p^{\sss(N)}\right)} = \lambda_+^N(B)  A_+^{n_1/2}(B)\prod_{l=2}^{\infty}\left(1+ a(B) r^l(B)\right)^{N_l}\, = \, \lambda_+^N(B)  A_+^{n_1/2}(B)\prod_{l=2}^{\infty}c_l(B)^{N_l},
	\end{equation*}
where we define 
\begin{equation*}
c_l(B) := 1+ a(B) r^l(B)\;.
\end{equation*}
Next, define 
	\begin{equation*}
	M_{\sss N}=N-\sum_{l\geq 2}l N_l
	\end{equation*}
for the number of vertices that are {\em not} part of a line. Then, 
denoting by $Z_{\sss N}^{(2)}(B)$  the partition function of $\CMNtwo$,
	\eqan{
	\label{rewrite-part-func-12}
	\frac{\q [Z_{\sss N}(B_{\sss N})]}{\q [Z_{\sss N}(B)]}&= \frac{\q [\e^{NF_{B_{\sss N}} (p^{\sss(N)})}\bar{Z}_{\sss M_{\sss N}}^{\sss (2)}(B_{\sss N})]}{\q[\e^{NF_{B} (p^{\sss(N)})}\bar{Z}_{\sss M_{\sss N}}^{\sss (2)}(B)]}\\ \nonumber
	& = \frac{\lambda_+^N(B_{\sss N})A_+^{n_1/2}(B_{\sss N}) Q_{\sss N}\big[\bar{Z}_{\sss M_{\sss N}}^{\sss (2)}(B_{\sss N})
	\prod_{l=2}^{\infty} c_l(B_{\sss N})^{N_l}\big]}
	{\lambda_+^N(B)A_+^{n_1/2}(B) Q_{\sss N}\big[\bar{Z}_{\sss M_{\sss N}}^{\sss (2)}(B)\prod_{l=2}^{\infty} c_l(B)^{N_l}\big]},
}
where we write
	\begin{equation*}
	\bar{Z}_{\sss N}^{\sss(2)}(B)=\lambda_+^{-N}Z_{\sss N}^{\sss(2)}(B).
	\end{equation*}
	
\paragraph{Asymptotic behavior of the annealed partition function.}
The key result for the proof of Theorem \ref{CLT_annealed2}  is the following proposition
that establishes the exponential growth of the annealed partition function with polynomial corrections:

\begin{proposition}\label{prop-exp-part-funct}
The following holds true:
\begin{itemize}
\item[(a)] For $B \neq 0$, there exist  $I=I(B)$ and $J=J(B)$ such that, as $N \rightarrow \infty$,
	\eqn{
	\label{asymp-part-func-12}
	Q_{\sss N}\big[\bar{Z}_{\sss M_{\sss N}}^{\sss (2)}(B)\prod_{l=2}^{\infty} c_l(B)^{N_l}\big]
	=J(B)\e^{I(B)N}(1+o(1)).
	}
The function $B\mapsto J(B)$ is continuous, while $B\mapsto I(B)$ is infinitely differentiable.
\item[(b)] 
Given $t\in \bbR$ 
there exist  $\bar{I}=\bar{I}(t)$ and $\bar{J}$ such that, as $N \rightarrow \infty$,

\be\label{IeJBN0}
Q_{\sss N}\big[\bar{Z}_{\sss M_{\sss N}}^{\sss (2)}\left(\frac{t}{\sqrt{N}}\right)\prod_{l=2}^{\infty} c_l\left(\frac{t}{\sqrt{N}}\right)^{N_l}\big] =\bar{J}\e^{\bar{I}(t/\sqrt{N}) N}(1+o(1)).
\ee
The function  $t\mapsto \bar{I}(t)$ is infinitely differentiable.
\end{itemize}
\end{proposition}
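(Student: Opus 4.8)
The plan is to condition on the line-length profile $\mathbf{N}=(N_l)_{l\ge 2}$ and to separate the line contribution $\prod_{l\ge2}c_l(B)^{N_l}$, which depends on the graph only through $\mathbf{N}$, from the tori contribution $\bar{Z}_{\sss M_{\sss N}}^{\sss (2)}(B)$. Conditionally on $\mathbf{N}$, the number $M_{\sss N}=N-\sum_{l\ge2} l N_l$ of vertices outside the lines is fixed, and these vertices form a $\CMtwo{M_{\sss N}}$ graph independently of the line structure. Hence, writing $\Phi_m(B):=\q\big[\bar{Z}_{m}^{\sss (2)}(B)\big]$, we have
$$\q\big[\bar{Z}_{\sss M_{\sss N}}^{\sss (2)}(B)\textstyle\prod_{l\ge2}c_l(B)^{N_l}\big]=\q\big[\Phi_{M_{\sss N}}(B)\textstyle\prod_{l\ge2}c_l(B)^{N_l}\big].$$
By Lemma \ref{lem-part-function-tori}, applied with $\alpha=a(B)$ and $\gamma=r(B)$ (legitimate since $0<r(B)<1$), the factor $\Phi_m(B)$ is bounded uniformly in $m$; moreover, since the short-cycle counts of $\CMtwo{m}$ converge to independent Poisson variables, $\Phi_m(B)\to\Phi_\infty(B)$ as $m\to\infty$ for a finite limit that is continuous in $B$. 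Along the typical profiles $M_{\sss N}\to\infty$, so the tori produce only the bounded, continuous prefactor $\Phi_\infty(B)$ and the exponential growth rate is carried entirely by the line part.

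It remains to analyze the line part $G_{\sss N}(B):=\q\big[\exp\big(\sum_{l\ge2}N_l\log c_l(B)\big)\big]$. The input is the law of $\mathbf{N}$ in $\CMNonetwo$: the line lengths are produced by the uniform pairing of half-edges, and $(N_l)_{l\ge2}$ obeys a law of large numbers $N_l/N\convp\mu_l$ together with a multivariate central limit theorem for $(N_l-N\mu_l)/\sqrt{N}$, as established in \cite{dPB} and exploited in \cite{GGPvdH1}. Since $\theta_l:=\log c_l(B)$ is bounded and summable in $l$ (because $r(B)<1$), the sum $\sum_{l\ge2}N_l\theta_l$ is of order $N$, so the plain central limit theorem is not enough; I would instead compute the scaled cumulant generating function $I(B):=\lim_{N\to\infty}\tfrac1N\log G_{\sss N}(B)$ by an exponential-tilting/saddle-point argument, the tilt $\exp(\sum_l N_l\theta_l)$ moving the typical profile to $N_l\approx N\mu_l^{\ast}(B)$ determined by a fixed-point equation. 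The subexponential factor $J(B)$ is then the product of $\Phi_\infty(B)$ with the Gaussian local-CLT correction at the tilted mean. Smoothness of $B\mapsto I(B)$ and continuity of $B\mapsto J(B)$ follow from the real-analyticity of $B\mapsto c_l(B)$, the implicit function theorem for the fixed-point equation, and nondegeneracy of the limiting covariance. This proves (a).

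For part (b) the point $B=0$ requires care. A direct computation gives $A_-(0)=0$, hence $a(0)=0$ and $c_l(0)\equiv 1$: at zero field the line weight degenerates to $1$, so the whole profile dependence is carried by the small-field expansion $\log c_l(B)=a(B)\,r(B)^l-\tfrac12 a(B)^2 r(B)^{2l}+\cdots$ with $a(B)=a'(0)B+O(B^2)$. Substituting $B=t/\sqrt{N}$ and using $N_l/N\convp\mu_l$ together with the $\sqrt{N}$-scale Gaussian fluctuations of $(N_l)$, the exponent $\sum_{l\ge2}N_l\log c_l(t/\sqrt{N})$ splits into a deterministic part of order $t\sqrt{N}$ (the mean of $a(B)\sum_l N_l r(B)^l$), which feeds $\bar{I}'(0)$, and $O(1)$-in-$t$ contributions, coming both from the Gaussian fluctuation of $\sum_l N_l r(0)^l$ and from the second-order term $-\tfrac12 a'(0)^2 t^2\sum_l\mu_l r(0)^{2l}$, which together feed $\bar{I}''(0)$. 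The net effect is the expansion
$$\bar{I}(t/\sqrt{N})\,N=\bar{I}(0)\,N+\bar{I}'(0)\,t\sqrt{N}+\tfrac12\bar{I}''(0)\,t^2+o(1),$$
with $\bar{I}(0)=0$ and $\bar{J}=\Phi_\infty(0)$; here $\bar{I}$ is the rate $I$ from (a) extended smoothly to $B=0$, and infinite differentiability of $t\mapsto\bar{I}(t)$ is inherited from the analyticity of the tilt. To make this rigorous I would establish the expansion of (a) uniformly for $B$ in a neighborhood of $0$ and then specialize to $B=t/\sqrt{N}$.

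The main obstacle is precisely this passage from the central limit theorem for $\mathbf{N}$ to the control of the exponential functional $G_{\sss N}(B)$: one must produce the true rate $I(B)$ (rather than the naive law-of-large-numbers value $\sum_{l\ge2}\mu_l\log c_l(B)$) together with the sharp prefactor $J(B)$, and do so uniformly in $B$ near $0$ so that the substitution $B=t/\sqrt{N}$ in part (b) is justified. This requires quantitative, rather than merely distributional, control of the line-length statistics of $\CMNonetwo$—through an explicit generating-function and saddle-point analysis of the joint law of $(N_l)_{l\ge2}$, or equivalently a tilted local limit theorem—which is the technical heart of the argument.
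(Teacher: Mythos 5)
Your high-level decomposition --- condition on the line-length profile, bound the tori factor via Lemma \ref{lem-part-function-tori}, and extract the exponential rate of the line part by a saddle-point/tilting argument --- is the same skeleton the paper uses, but two of your specific claims fail and the technical core is left unexecuted. First, it is not true that $M_{\sss N}\to\infty$ along typical profiles: Lemma \ref{lem-vertices-tori} shows $M_{\sss N}\convd M=\sum_{l\geq 1}lP_l$, a \emph{finite} random variable, because almost all degree-2 vertices are absorbed into the $n_1/2$ lines and only $O(1)$ of them end up in cycles. Consequently the tori do not decouple into a single prefactor $\Phi_\infty(B)$: since $M_{\sss N}=N-\sum_l lN_l$ is a deterministic function of the line profile, the conditional expectation $\q\big[\prod_l c_l(B)^{N_l}\mid M_{\sss N}=m\big]$ carries an $m$-dependent factor $(b^*)^m$ (Lemma \ref{asy_M_N_equal_zero}), and the correct prefactor is the tilted sum $J(B)\propto\sum_m \expec_m[\bar Z_m^{\sss(2)}(B)]\,(b^*)^m\,\prob(M=m)$, whose finiteness itself requires $b^*<(1+p)/(2p)$ (Remark \ref{bstar_less_bc}). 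Second, the ``technical heart'' you defer is precisely what the proposition asserts: the paper obtains it not from a CLT for $(N_l)_{l\geq 2}$ but from an exact combinatorial formula for the conditional law of the profile given $M_{\sss N}=m$ (Lemma \ref{lem-ZN-condM=m}), Stirling asymptotics yielding a strictly concave exponent $H(s,t)$ with interior maximizer (Lemma \ref{lemma_asymptB}), a two-dimensional discrete Laplace method (Lemma \ref{asy_M_N_equal_zero}), and a separate boundary estimate (Lemma \ref{contribution_l_magg}); none of this is supplied or replaced by your outline.

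In part (b) there is also a concrete miscalculation. One has $A_-(0)=0$ and in fact $a(B)=A_-(B)/A_+(B)=O(B^2)$ as $B\to 0$, so at $B=t/\sqrt{N}$ the total tilt is $a(t/\sqrt{N})\sum_l N_l r^l=O(t^2)$: there is no deterministic contribution of order $t\sqrt{N}$ from the line weights (hence $\bar I'(0)=0$; the $t\sqrt{N}$ terms in the CLT come only from $\lambda_+^N$ and $A_+^{n_1/2}$), and the Gaussian fluctuations of the profile enter at order $t^2/\sqrt{N}$ and contribute nothing to $\bar I''(0)$ --- the paper shows the corresponding term $S_2(N)$ vanishes. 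The limit is simply $\bar I(t)=\bar I_0\,C\,t^2$ with $\bar I_0=\lim_{N\to\infty}\sum_l r^l\,\q(p_l^{\sss(N)})$, obtained from the law of large numbers alone, so your attribution of $\bar I'(0)$ and $\bar I''(0)$ to profile fluctuations is incorrect.
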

\medskip

\noindent
\medskip

\noindent
{\em Proof of Theorem \ref{CLT_annealed2} subject to Proposition \ref{prop-exp-part-funct}.}
We start proving the theorem for  $B \neq 0$. 
We substitute \eqref{asymp-part-func-12} into \eqref{rewrite-part-func-12} to arrive at
	\eqan{
	\frac{\q [Z_{\sss N}(B_{\sss N})]}{\q [Z_{\sss N}(B)]}
	&= (1+o(1))\frac{\lambda_+^N(B_{\sss N})A_+^{n_1/2}(B_{\sss N})J(B_{\sss N})\e^{I(B_{\sss N})N}}
	{\lambda_+^N(B)A_+^{n_1/2}(B)J(B)\e^{I(B)N}}\\
	&=(1+o(1))\Big(\frac{\lambda_+(B_{\sss N})}{\lambda_+(B)}\Big)^N\Big(\frac{A_+(B_{\sss N})}{A_+(B)}\Big)^{n_1/2}\e^{N(I(B_{\sss N})-I(B))},\nn
	}
where we use the fact that $B\mapsto J(B)$ is continuous to obtain that $J(B_{\sss N})=(1+o(1))J(B)$. We can next use the differentiability of $B\mapsto I(B)$ and the fact that $B_{\sss N}=B+t/\sqrt{N}$ to expand out
	\begin{align}\label{lapiuimp}\nn
	\frac{\q [Z_{\sss N}(B_{\sss N})]}{\q [Z_{\sss N}(B)]}
	= &(1+o(1))\e^{t\sqrt{N} \big[\frac{\partial}{\partial t}\log{\lambda_+(B+t)}|_{t=0}+\frac{n_1}{2N}\frac{\partial}{\partial t}\log{A_+(B+t)}|_{t=0}+\frac{\partial}{\partial t}I(B+t)|_{t=0}\big]}\\
	& \times \e^{\sigma_2^2t^2/2},
	\end{align}
where
	\eqn{\label{var_sigma_2}
	\sigma_2^2 \,=\, \frac{\partial^2}{\partial t^2}\log{\lambda_+(B+t)}|_{t=0}+\frac{(1-p)}{2}\frac{\partial^2}{\partial t^2}\log{A_+(B+t)}|_{t=0}+\frac{\partial^2}{\partial t^2}I(B+t)|_{t=0}.
	}
Since 
\begin{align*}
\widetildep(S_{\sss N})= &N\big[\frac{\partial}{\partial t}\log{\lambda_+(B+t)}|_{t=0}+\frac{n_1}{2N}\frac{\partial}{\partial t}\log{A_+(B+t)}|_{t=0}+\frac{\partial}{\partial t}I(B+t)|_{t=0}\big]\\
&+o(\sqrt{N}) \;,
\end{align*}
then \eqref{lapiuimp}  implies  \eqref{CLT-ann-12-aim}, thus proving the theorem in the case $B\ne 0$.
\medskip

\noindent
For $B=0$, in a similar way now using \eqref{IeJBN0}, we get
	\begin{equation*}
	\frac{\q [Z_{\sss N}(t/\sqrt{N})]}{\q [Z_{\sss N}(0)]}
	= (1+o(1))\e^{t\sqrt{N} \big[\frac{\partial}{\partial t}\log{\lambda_+(t)}|_{t=0}+\frac{n_1}{2N}\frac{\partial}{\partial t}\log{A_+(t)}|_{t=0}+\frac{\partial}{\partial t}\bar{I}(t)|_{t=0}\big]} \e^{\bar{\sigma}_2^2t^2/2},\nn
	\end{equation*}
where
	\begin{equation*}
	\bar{\sigma}_2^2 \,=\, \frac{\partial^2}{\partial t^2}\log{\lambda_+(t)}|_{t=0}+\frac{(1-p)}{2}\frac{\partial^2}{\partial t^2}\log{A_+(t)}|_{t=0}+\frac{\partial^2}{\partial t^2}\bar{I}(t)|_{t=0}.
	\end{equation*}
\qed
\smallskip

\paragraph{Strategy to prove asymptotic behavior.}
The remainder of this section is devoted to the proof of Proposition \ref{prop-exp-part-funct}.
We use the law of total probability to write
	\begin{align}\label{prob_tot_prop4.1}\nn
	&Q_{\sss N}\big[\bar{Z}_{\sss M_{\sss N}}^{\sss(2)}(B)\prod_{l=2}^{\infty} c_l(B)^{N_l}\big]\\
	&=\sum_{m=0}^{n_2}\expec_{m}[\bar{Z}_{m}^{\sss(2)}(B)] Q_{\sss N}\big[\prod_{l=2}^{\infty} c_l(B)^{N_l}\mid M_{\sss N}=m\big] \q(M_{\sss N}=m),
	\end{align}
where we denote by the symbol $\expec_{m}$ the expectation with respect to an independent ${\mathrm{CM}_{m}(\mathrm{\textbf{2}})}$.

Our aim is to prove that the asymptotic behavior  of \eqref{prob_tot_prop4.1} is essentially dominated by the term with $m=0$, which gives the exponential 
growth $J(B) e^{NI(B)}$ stated in Proposition \ref{prop-exp-part-funct}. 
To achieve a full control we analyze in the following the three contributions whose product gives rise to the summand of \eqref{prob_tot_prop4.1}:
\begin{itemize}
\item[i)] $\expec_m[\bar{Z}_{m}^{\sss(2)}(B)]$: this is subdominant in the  limit $N\to\infty$ since, by  Lemma \ref{lem-part-function-tori},  
$\sup_m  \expec_m[\bar{Z}_{m}^{\sss(2)}(B)]$ is bounded. Therefore it will appear only in the prefactor $J(B)$.
\item[ii)]  $\q(M_{\sss N}=m)$: we study the distribution of the number of vertices in tori $M_{\sss N}$  in Lemma \ref{lem-vertices-tori};
in particular we prove the existence of a limiting distribution function in the limit $N\to\infty$.
\item[iii)] $Q_{\sss N}\big[\prod_{l=2}^{\infty} c_l(B)^{N_l}\mid M_{\sss N}=m\big]$:  this is rewritten explicitly in Lemma  \ref{lem-ZN-condM=m} 
and its asymptotics is computed in Lemmata \ref{lemma_asymptB} and \ref{asy_M_N_equal_zero}.
\end{itemize}
\medskip

\paragraph{The number of vertices in tori.}
We start by analyzing the random variable $M_{\sss N}$  representing the number of vertices belonging to tori.

\begin{lemma}[The number of vertices in tori]
\label{lem-vertices-tori}
When $N\rightarrow \infty$, there exists a random variable $M$ such that
	\begin{equation*}
	M_{\sss N}\convd M.
	\end{equation*}
Further,
	\eqan{
	\label{law-MN}\nn
	&\q(M_{\sss N}=m)\\ \nn
	&=\frac{1}{(n_1+2n_2-1)!!} {{n_2}\choose{m}} 2^{n_2-m} (n_2-m)! (n_1-1)!! (2m-1)!! {{n_1/2+n_2-m-1}\choose {n_2-m}}\\ 
	&=2^{n_2} \frac{(n_1-1)!!n_2!}{(n_1+2n_2-1)!!} 2^{-2m} {{2m}\choose{m}} {{n_1/2+n_2-m-1}\choose {n_2-m}}.
	}
\end{lemma}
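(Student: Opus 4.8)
The statement has two parts: the exact finite-$N$ formula for $\q(M_{\sss N}=m)$, and the weak convergence $M_{\sss N}\convd M$. The plan is to prove the exact formula first by a direct enumeration of half-edge pairings, and then to read off the limiting law as a corollary, so essentially all the work is combinatorial. Recall that the configuration model is the uniform pairing of the $n_1+2n_2$ half-edges, so there are $(n_1+2n_2-1)!!$ equally likely pairings in total (here $n_1$ is even since $\ell_{\sss N}$ is assumed even). Every connected component is either a torus (all vertices of degree $2$) or a line (a path with its two degree-$1$ endpoints), so the set of degree-$2$ vertices lying on tori is exactly the complement of those on lines. The key structural observation is that a pairing with $M_{\sss N}=m$ decomposes \emph{exactly}: the chosen set $S$ of $m$ degree-$2$ torus vertices has its $2m$ half-edges paired among themselves, and the remaining $n_1+2(n_2-m)$ half-edges are paired among themselves into lines only. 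Since the torus-set is uniquely determined by the pairing, each admissible pairing is counted once if we first choose $S$ (there are $\binom{n_2}{m}$ choices), then pair $S$ internally, then pair the complement into lines.

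\paragraph{The two sub-counts.}
Any pairing of the $2m$ half-edges of the $m$ vertices in $S$ yields a $2$-regular multigraph, hence a disjoint union of tori, so all $(2m-1)!!$ pairings of $S$ are admissible and each puts every vertex of $S$ on a torus. For the complement I would prove that the number $P(n_1,k)$ of pairings of $n_1$ degree-$1$ and $k:=n_2-m$ degree-$2$ vertices producing \emph{only} lines equals $P(n_1,k)=2^{k}(n_1-1)!!\prod_{j=0}^{k-1}\big(\tfrac{n_1}{2}+j\big)$. To see this, count the underlying line-graphs and the half-edge labelings separately: first pair the degree-$1$ vertices into the $n_1/2$ endpoint pairs in $(n_1-1)!!$ ways, giving $n_1/2$ base edges; then insert the labelled degree-$2$ vertices $w_1,\dots,w_k$ in this fixed order, each subdividing one currently present edge, which gives $\prod_{j=0}^{k-1}(n_1/2+j)$ choices. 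The map from insertion sequences to line-graphs is a bijection, because a given line-graph is reconstructed uniquely by deleting $w_k,w_{k-1},\dots$ in turn. Finally, each line-graph admits exactly $2^{k}$ half-edge pairings, one factor of $2$ for the orientation of the two half-edges at each interior degree-$2$ vertex. Assembling the three factors and dividing by $(n_1+2n_2-1)!!$ gives the first displayed identity; the second follows from $(2m-1)!!=(2m)!/(2^m m!)$ together with $\binom{n_2}{m}(n_2-m)!=n_2!/m!$, which turns $(2m-1)!!/m!$ into $2^{-m}\binom{2m}{m}$ and collects the powers of $2$ into $2^{n_2}2^{-2m}$.

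\paragraph{The limiting law.}
With the exact formula in hand, I would establish $M_{\sss N}\convd M$ by comparing to the $m=0$ term. Forming the ratio $\q(M_{\sss N}=m)/\q(M_{\sss N}=0)$, all the $m$-independent prefactors cancel and one is left with $2^{-2m}\binom{2m}{m}$ times the ratio of the two generalized binomials, which simplifies to
\[
\frac{\q(M_{\sss N}=m)}{\q(M_{\sss N}=0)}
=2^{-2m}\binom{2m}{m}\,
\frac{n_2(n_2-1)\cdots(n_2-m+1)}{(\tfrac{n_1}{2}+n_2-1)\cdots(\tfrac{n_1}{2}+n_2-m)}.
\]
Using $n_1\sim(1-p)N$ and $n_2\sim pN$, for fixed $m$ this tends to $\binom{2m}{m}\theta^{m}$ with $\theta=\tfrac{p}{2(1+p)}$. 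Since $\sum_{m\ge0}\binom{2m}{m}\theta^{m}=(1-4\theta)^{-1/2}=\sqrt{(1+p)/(1-p)}<\infty$ because $4\theta=\tfrac{2p}{1+p}<1$ for $p\in(0,1)$, the limiting ratios are summable. A uniform geometric domination of the ratios (valid since the displayed ratio is bounded by $C\rho^{m}$ for some $\rho<1$ uniformly in $N$) then lets me pass to the limit under the sum, forcing $\q(M_{\sss N}=0)\to\sqrt{(1-p)/(1+p)}$ and hence $\q(M_{\sss N}=m)\to\sqrt{\tfrac{1-p}{1+p}}\binom{2m}{m}\theta^{m}=:\q(M=m)$ for every $m$, which is the announced weak convergence.

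\paragraph{Main obstacle.}
The delicate point is the enumeration $P(n_1,k)$, i.e.\ getting every constant right: the $(n_1-1)!!$ for endpoint pairings, the rising-factorial insertion count with its bijection justification, and the orientation factor $2^{k}$. Equally important is arguing that the cycle/line split is \emph{exact}—that torus vertices never attach to line vertices and that the complement pairing produces no accidental torus—so that each pairing is counted exactly once. The passage to the limit, by contrast, is routine once the ratio is identified with the generating function of the central binomial coefficients and a uniform geometric bound is supplied.
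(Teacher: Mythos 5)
Your proof is correct, and it splits naturally into two halves that compare differently with the paper's argument. The combinatorial half is essentially the paper's own count in a different packaging: the paper's factors $\binom{n_2}{m}$, $(2m-1)!!$, $(n_1-1)!!$, $(n_2-m)!$, $2^{n_2-m}$ and $\binom{n_1/2+n_2-m-1}{n_2-m}$ are exactly your $\binom{n_2}{m}\,(2m-1)!!\,P(n_1,n_2-m)$, since $(n_2-m)!\binom{n_1/2+n_2-m-1}{n_2-m}=\prod_{j=0}^{n_2-m-1}(n_1/2+j)$; your sequential-insertion bijection is a cleaner justification of the paper's ``order the line vertices and distribute them'' step, and your explicit remark that the torus/line split is exact (so each pairing is counted once) is a point the paper leaves implicit. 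The convergence half is genuinely different. The paper proves $M_{\sss N}\convd M$ \emph{before} and independently of the exact formula, by showing that the cycle counts $(N_l^{\sss(t)})_{l\ge 1}$ converge to independent Poissons with means $\lambda_l=\frac{1}{2l}(2p/(1+p))^l$ and truncating large $l$, which identifies $M=\sum_l lP_l$ as a compound Poisson variable and yields the generating function $\q[b^M]=\e^{\sum_l(b^l-1)\lambda_l}$ used later in the paper (finiteness for $b<(1+p)/(2p)$). You instead extract the limit law directly from the exact formula via the ratio $\q(M_{\sss N}=m)/\q(M_{\sss N}=0)\to\binom{2m}{m}\theta^m$ with $\theta=p/(2(1+p))$ plus a uniform geometric domination; this gives the closed-form mass function $\q(M=m)=\sqrt{\tfrac{1-p}{1+p}}\binom{2m}{m}\theta^m$, whose generating function $\sqrt{(1-p)/(1+p)}\,(1-4\theta b)^{-1/2}$ indeed coincides with the paper's $\e^{\sum_l(b^l-1)\lambda_l}$, so nothing needed downstream is lost. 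Your route is more self-contained (no appeal to the Poisson-convergence machinery of \cite[Proposition 7.12]{vdH}) and produces an explicit pmf; the paper's route exposes the structural decomposition of $M$ into independent cycle contributions, which is conceptually informative but not required by the statement.
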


\proof Number the vertices of degree 2 in an arbitrary way. We write
	\begin{equation*}
	M_{\sss N}=\sum_{l=1}^{\infty} l N_{l}^{\sss(t)},
	\qquad
	\text{where}
	\qquad
	N_{l}^{\sss(t)}=\sum_{i=1}^{n_2} J_i(l),
	\end{equation*}
and $J_i(l)$ is the indicator that vertex $i$ is in a cycle of length $l$ of which vertex $i$ has the smallest label. We compute that
	\begin{equation*}
	\q[N_{l}^{\sss(t)}]
	=\frac{n_2}{l}\q(\text{vertex $1$ is in cycle of length $l$})
	\rightarrow \frac{1}{2 l} (2p/(1+p))^{l}\equiv \lambda_{l}.
	\end{equation*}
It is not hard to see, along the lines of \cite[Proposition 7.12]{vdH}, that $(N_{l}^{\sss(t)})_{l\geq 1}$ converges in distribution to a collection of {\em independent} Poisson random variables $(P_{l})_{l\geq 1}$ with parameters $(\lambda_{l})_{l\geq 1}$. Further, since $\q[N_{l}^{\sss(t)}]\leq \frac{1}{2 l} (n_2/\ell_{\sss N})^{l}$, which decays exponentially, the contribution from large $l$ equals zero whp, i.e., $\q(\exists l>T\colon N_{l}^{\sss(t)}>0)$ is small uniformly in $N$ for $T$ large. This shows that
	\begin{equation*}
	M_{\sss N}\convd \sum_{l\geq 1} l P_{l}\equiv M.
	\end{equation*}
Note that
	\eqn{\label{mom_gen_func_b}
	\q[b^M]=\prod_{l=1}^{\infty}\q[b^{l P_{l}}]=\prod_{l=1}^{\infty}\e^{(b^{l}-1)\lambda_{l}}
	=\e^{\sum_{l\geq 1}(b^{l}-1)\lambda_{l}},
	}
which is finite only when $b<(1+p)/(2p)$.

To prove \eqref{law-MN}, we note that
	\eqn{
	\label{law-MN-proof}
	\q(M_{\sss N}=m)=\frac{1}{(n_1+2n_2-1)!!}N(n_1,n_2,m),
	}
where $N(n_1,n_2,m)$ is the number of ways in which the half-edges can be paired such that there are precisely $m$ degree 2 vertices in cycles. We claim that
	\eqn{
	\label{Nn1n2m-form}
	N(n_1,n_2,m)={{n_2}\choose{m}}2^{n_2-m} (n_2-m)! (n_1-1)!!(2m-1)!!{{n_1/2+n_2-m-1}\choose {n_2-m}}.
	}
For this, note that 
\begin{enumerate}
	\item[(1)] there are ${{n_2}\choose{m}}$ ways to choose the $m$ vertices of degree 2 that are in cycles;
	\item[(2)] there are $(2m-1)!!$ ways to pair the half-edges that are incident to vertices in cycles;
	\item[(3)] there are $(n_1-1)!!$ ways to pair the vertices of degree 1 (and this corresponds to the pairing of degree 1 vertices in lines);
	\item[(4)] there are $(n_2-m)!$ ways to order the vertices that are in lines; 
	\item[(5)] there are $2$ ways to attach the half-edges of a degree 2 vertex inside a line, and there are in total $n_2-m$ degree 2 vertices in lines, giving $2^{n_2-m}$ ways to attach their half-edges; and
	\item[(6)] finally, there are ${{n_1/2+n_2-m-1}\choose {n_2-m}}$ ways to create $n_1/2$ lines with $n_2-m$ vertices of degree 2. 
\end{enumerate}
Multiplying these numbers out gives \eqref{Nn1n2m-form}.
This completes the proof of Lemma \ref{lem-vertices-tori}.\qed
\smallskip

\paragraph{Combinatorial expression of the partition function.}
To perform the asymptotic analysis of the partition function $Q_{\sss N}\big[\prod_{l=2}^{\infty} c_l(B)^{N_l}\mid M_{\sss N}=m\big]$,
we rewrite it as double sum in  Lemma  \ref{lem-ZN-condM=m} and then we investigate the asymptotics of the summand in 
Lemma \ref{lemma_asymptB} by Stirling's formula. Finally, in Lemma \ref{asy_M_N_equal_zero}, we use the Laplace method to estimate the asymptotics of the double sum.

\begin{lemma}[Generating function of number of lines in $\CMNonetwo$]
\label{lem-ZN-condM=m}
For every $a,r$, for $c_l=1+ar^l$ for every $l\geq 2$,
	\eqn{\label{gen_fun_numb_lin}
	Q_{\sss N}\big[\prod_{l=2}^{\infty} c_l(B)^{N_l}\mid M_{\sss N}=m\big]
	\,=\, \sum_{\ell=0}^{n_1/2}\sum_{k=0}^{n_2-m}\,B_{\ell,k}^{\sss (N)}(n_2-m),
	}
where 
\be\label{def_Bmlk}
B_{\ell,k}^{\sss (N)}(n_2-m) \,=\, {{{n_1/2}\choose {\ell}}} (ar^2)^{\ell}  r^k \frac{{{{\ell+k-1}\choose {k}}}\,{{{n_1/2-\ell + n_2-m-k-1}\choose {n_2-m-k}}}}{{{{n_1/2+n_2-m-1}\choose {n_2-m}}}}.
\ee
\end{lemma}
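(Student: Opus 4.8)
The statement is an exact finite-$N$ combinatorial identity in the parameters $a$ and $r$, so the plan is to evaluate the conditional expectation directly, with no asymptotics. First I would rewrite the product over lengths as a product over the individual lines. Since the graph has exactly $K_{\sss N}^{\sss(l)}=n_1/2$ lines and a line of length $l$ contributes the factor $c_l=1+ar^l$, one has $\prod_{l\ge2}c_l^{N_l}=\prod_{i=1}^{n_1/2}\big(1+ar^{L_{\sss N}^{\sss(l)}(i)}\big)$. Writing $m_i:=L_{\sss N}^{\sss(l)}(i)-2$ for the number of degree-$2$ vertices in the $i$th line (each line carries two degree-$1$ endpoints, so $m_i\ge0$), this becomes $\prod_{i}\big(1+ar^2 r^{m_i}\big)$, and expanding the product over subsets $S$ of the line labels gives the exact identity $\prod_{l\ge2}c_l^{N_l}=\sum_{\ell=0}^{n_1/2}(ar^2)^\ell\sum_{|S|=\ell}r^{\sum_{i\in S}m_i}$, in which a subset $S$ with $|S|=\ell$ and $\sum_{i\in S}m_i=k$ contributes precisely the weight $(ar^2)^\ell r^k$ that appears in $B_{\ell,k}^{\sss(N)}$.

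The crux is the conditional law of the composition $(m_1,\dots,m_{n_1/2})$ given $M_{\sss N}=m$. I claim that, conditionally on $M_{\sss N}=m$, this vector is uniformly distributed over the weak compositions of $n_2-m$ into $n_1/2$ parts, of which there are $\binom{n_1/2+n_2-m-1}{n_2-m}$. This is exactly where the combinatorics of Lemma \ref{lem-vertices-tori} is reused: in the counting of pairings there, the factor $\binom{n_1/2+n_2-m-1}{n_2-m}$ (step (6)) enumerates the compositions, while the remaining factors $(n_1-1)!!$, $(n_2-m)!$ and $2^{n_2-m}$ (steps (3)--(5)) count the pairing of the degree-$1$ endpoints, the ordering of the line degree-$2$ vertices, and their orientations, and are all independent of the composition. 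Hence every composition is realized by the same number of pairings, which yields uniformity. I expect this labeling/counting step to be the main obstacle: one must verify that the number of half-edge pairings producing a prescribed per-line count $(m_i)$ does not depend on $(m_i)$ — the composition-dependence being entirely absorbed into the multinomial identity $\binom{n_2-m}{m_1,\dots,m_{n_1/2}}\prod_i m_i!=(n_2-m)!$ — and that the lines are labeled consistently (e.g.\ by smallest endpoint label) so that the uniformity is genuinely over \emph{ordered} compositions, the unordered multiplicities being correctly reweighted thereby.

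Granting the uniform conditional law, the remainder is bookkeeping. By exchangeability of the line labels under the uniform law, all $\binom{n_1/2}{\ell}$ subsets $S$ of size $\ell$ share the same conditional distribution of $\sum_{i\in S}m_i$, so $\q\big[\sum_{|S|=\ell}r^{\sum_{i\in S}m_i}\mid M_{\sss N}=m\big]=\binom{n_1/2}{\ell}\,\q\big[r^{m_1+\cdots+m_\ell}\mid M_{\sss N}=m\big]$. A stars-and-bars count then gives $\q\big(\sum_{i=1}^\ell m_i=k\mid M_{\sss N}=m\big)=\binom{\ell+k-1}{k}\binom{n_1/2-\ell+n_2-m-k-1}{n_2-m-k}\big/\binom{n_1/2+n_2-m-1}{n_2-m}$, the numerator distributing $k$ of the $n_2-m$ line degree-$2$ vertices among the first $\ell$ lines and the remaining $n_2-m-k$ among the other $n_1/2-\ell$ lines. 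Multiplying by $(ar^2)^\ell r^k$ and summing over $\ell$ and $k$ reproduces $\sum_{\ell,k}B_{\ell,k}^{\sss(N)}(n_2-m)$ exactly. As a consistency check, the degenerate terms behave correctly: for $\ell=0$ only $k=0$ survives, since $\binom{k-1}{k}=0$ for $k\ge1$, matching the empty subset $S$.
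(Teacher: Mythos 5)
Your argument is correct and is essentially the paper's own proof: both expand $\prod_{l\ge 2}c_l^{N_l}$ over the $n_1/2$ lines, use that conditionally on $M_{\sss N}=m$ the per-line counts of degree-$2$ vertices are uniform over weak compositions of $n_2-m$ into $n_1/2$ parts, and finish by exchangeability plus stars-and-bars. If anything, you justify the uniform conditional law (via the composition-independence of the pairing counts in Lemma \ref{lem-vertices-tori}) more explicitly than the paper, which simply asserts it.
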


\begin{proof} When $M_{\sss N}=m$, we have that $n_2-m$ vertices of degree 2 have to be divided over $n_1/2$ lines. Number the lines as $1, \ldots, n_1/2$ in an arbitrary way. Denote the number of degree 2 vertices in line $j$ by $Y_j$ and rewrite
	\begin{equation*}
	Q_{\sss N}\big[\prod_{l=2}^{\infty} c_l(B)^{N_l}\mid M_{\sss N}=m\big] = \hspace{-0.4 cm} \sum_{(i_1,\ldots, i_{n_1/2})} \hspace{-0.4 cm}
	\q\left(Y_1 = i_1,\ldots, Y_{n_1/2}=i_{n_1/2}\right)\prod_{j=1}^{n_1/2} (1+ar^{i_j+2})
	\end{equation*}
where $(i_1,\ldots, i_{n_1/2})$ is such that $i_1+\cdots+i_{n_1/2}=n_2-m$. Let $[n_1/2]=\{1, \ldots, n_1/2\}$, and expand out  $\prod_{j=1}^{n_1/2} (1+ar^{i_j+2})$ to obtain
	\be\label{4.65}
	\sum_{(i_1,\ldots, i_{n_1/2})}\q\left(Y_1 = i_1,\ldots, Y_{n_1/2}=i_{n_1/2}\right)
	\sum_{\Gamma\subseteq [n_1/2]} (ar^2)^{|\Gamma|}  \prod_{j\in \Gamma} r^{i_j}.
	\ee
where the sum over $\Gamma$ is over all subsets of $[n_1/2]$. We denote
	\begin{align*}
	N_{n_1,n_2-m} \, &= \, \# \big\{(i_1,\ldots, i_{n_1/2}) 
	\colon i_j \geq 0 \; \forall \, j \;\, \mbox{and} \, \sum_{j=1}^{n_1/2} i_j = n_2-m \big\} \\
	&= \; {{n_1/2 + n_2-m -1}\choose {n_2-m}},
	\end{align*}
so that \eqref{4.65} is equal to
	\begin{align*}
		&\sum_{(i_1,\ldots, i_{n_1/2})} \frac{1}{{{n_1/2 + n_2-m-1}\choose {n_2-m}}}
		\sum_{\Gamma\subseteq [n_1/2]} (ar^2)^{|\Gamma|} \sum_{k=0}^{n_2-m} r^k \indic{\sum_{j\in \Gamma}i_j=k} \,  \\ \nn
		&= \, \sum_{(i_1,\ldots, i_{n_1/2})} \frac{1}{{{n_1/2 + n_2-m-1}\choose {n_2-m}}}
		\sum_{\ell=0}^{n_1/2}{{{n_1/2}\choose {\ell}}}  (ar^2)^{\ell} \sum_{k=0}^{n_2-m} r^k \indic{(i_1+\ldots+i_\ell=k)}\,  \\ \nn
		&= \, \frac{1}{{{n_1/2 + n_2-m-1}\choose {n_2-m}}}\\ 
		&\quad \; \, \times \sum_{\ell=0}^{n_1/2}{{{n_1/2}\choose {\ell}}}  (ar^2)^{\ell} 		\sum_{k=0}^{n_2-m} r^k {{{\ell+k-1}\choose {k}}}\,{{{n_1/2-\ell + n_2-m-k-1}\choose {n_2-m-k}}}\,\\ 
		&= \, \sum_{\ell=0}^{n_1/2}\sum_{k=0}^{n_2-m}{{{n_1/2}\choose {\ell}}} (ar^2)^{\ell}  r^k 
		\frac{{{{\ell+k-1}\choose {k}}}\,{{{n_1/2-\ell + n_2-m-k-1}\choose {n_2-m-k}}}}{{{{n_1/2+n_2-m-1}\choose {n_2-m}}}}.\nn
	\end{align*}
\end{proof}


\paragraph{Asymptotics by Stirling's formula.}
We continue the analysis by investigating the asymptotics of $B_{\ell,k}^{\sss (N)}(n_2)$ in \eqref{def_Bmlk} when $\ell,k$ and $n_2$ are of the same asymptotic order. To alleviate the notation we write $B_{a,b}^{\sss (N)}(n_2):=B_{\lfloor a\rfloor ,\lfloor b\rfloor}^{\sss (N)}(n_2)$  when $a,b$ are not necessarily integers.
\begin{lemma}[Asymptotics of $B_{\ell,k}^{\sss (N)}(n_2)$]
\label{lemma_asymptB}
Let $D_{p}=[0,(1-p)/2]\times [0,p]$. {For external fields $B \neq 0$}, there exists a function $H(s,t)$ continuous  in $D_{p}$ and smooth  in $D_{p}^{\circ}$ (the interior of $D_{p}$) and a function $C(s,t)$ smooth  in $D_{p}^{\circ}$, such that
	\be\label{asymptB}
	B_{sN,tN}^{\sss (N)}(n_2) \, = \, \frac{C(s,t)}{N}\,\exp \left\{N H(s,t)\right\}(1+o(1)),\quad \mbox{as}\quad N\to \infty,
	\ee
Moreover,  $H(s,t)$ is strictly concave on its domain $D_{p}$ and its (unique) maximum point $(s^{*}, t^{*})$ lies in the interior $D_{p}^{\circ}$. In $D_{p}^{\circ}$, the functions are defined as follows:
	\begin{align}
		\label{defH}\nn
		H(s,t) \, = \, &(1-p)\log\left(\frac{1-p}{2}\right) - 2s\log(s) 
		- 2\left(\frac{1-p}{2} -s\right)\log\left(\frac{1-p}{2} -s\right)  \\ \nn
		&+ s\log(ar^2) + t\log(r) + (s+t)\log(s+t) - t \log(t)   \\ \nn
		& + \left(\frac{1+p}{2}-s-t\right)\log\left(\frac{1+p}{2}-s-t\right) \\ 
		&-(p-t)\log(p-t) - \left(\frac{1+p}{2}\right) \log\left(\frac{1+p}{2}\right) + p \log(p),
	\end{align}
and
	\be\label{defC}
	C(s,t) \, = \, \frac{1}{2 \pi} \frac{\frac{1-p}{2}}{s \left(\frac{1-p}{2}-s\right)} \frac{ 	\sqrt{(\frac{1+p}{2}-s-t)(s+t)p}}{\sqrt{(\frac{1+p}{2})(p-t)t}}.
	\ee
Finally, uniformly in $(s,t)\in D_{p}$, 
	\eqn{
	\label{unif-bd-BN}
	B_{sN,tN}^{\sss (N)}(n_2)\leq CN^{1/2}\exp \left\{N H(s,t)\right\}.
	}
\end{lemma}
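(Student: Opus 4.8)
The plan is to apply Stirling's formula $\log n! = n\log n - n + \tfrac12\log(2\pi n) + o(1)$ to each of the four binomial coefficients in $B_{\ell,k}^{\sss (N)}(n_2)$ from \eqref{def_Bmlk}, after substituting $\ell=sN$, $k=tN$, and using $n_1/2=\tfrac{1-p}{2}N+O(1)$, $n_2=pN+O(1)$. For a binomial with parameters of order $N$ one has $\log\binom{\alpha N}{\beta N}=N\big[\alpha\log\alpha-\beta\log\beta-(\alpha-\beta)\log(\alpha-\beta)\big]+\tfrac12\log\tfrac{\alpha}{2\pi\beta(\alpha-\beta)N}+o(1)$, which splits each contribution into an order-$N$ entropy term and an order-$\log N$ Gaussian prefactor. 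Reading off $(\alpha,\beta)=(\tfrac{1-p}{2},s)$ for $\binom{n_1/2}{\ell}$, $(\alpha,\beta)=(s+t,t)$ for $\binom{\ell+k-1}{k}$, $(\alpha,\beta)=(\tfrac{1+p}{2}-s-t,\,p-t)$ for $\binom{n_1/2-\ell+n_2-k-1}{n_2-k}$, and $(\alpha,\beta)=(\tfrac{1+p}{2},p)$ for the denominator $\binom{n_1/2+n_2-1}{n_2}$ (entering with a minus sign), and adding the exact weight exponent $sN\log(ar^2)+tN\log r$ from $(ar^2)^\ell r^k$, I expect the order-$N$ terms to reproduce $NH(s,t)$ term by term: the $\log\tfrac{1-p}{2}$ pieces of the first and last binomials combine into $(1-p)\log\tfrac{1-p}{2}$, and the $s\log s$ and $(\tfrac{1-p}{2}-s)$ pieces double up exactly as in \eqref{defH}.

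For the prefactor I would collect the four $\tfrac12\log(\cdots)$ terms. The three numerator binomials contribute a factor $N^{-3/2}(2\pi)^{-3/2}$ inside the exponential of the half-logarithms and the denominator contributes $N^{+1/2}(2\pi)^{+1/2}$, giving overall $\tfrac{1}{2\pi N}$; taking the square root of the remaining rational expression, the perfect squares $(\tfrac{1-p}{2})^2$, $s^2$, $(\tfrac{1-p}{2}-s)^2$ come out rationally and leave exactly the form \eqref{defC} of $C(s,t)$. The $O(1)$ shifts from the floors $\lfloor pN\rfloor$ affect only the $(1+o(1))$ factor. This yields \eqref{asymptB}.

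Next I would establish strict concavity of $H$ and locate its maximizer. Differentiating gives $H_s=-2\log s+2\log u+\log(ar^2)+\log v-\log w$ and $H_t=\log r+\log v-\log t-\log w+\log z$, where I abbreviate $u=\tfrac{1-p}{2}-s$, $v=s+t$, $w=\tfrac{1+p}{2}-s-t$, $z=p-t$; hence $H_{ss}=-\tfrac2s-\tfrac2u+\tfrac1v+\tfrac1w$, $H_{tt}=-\tfrac1t-\tfrac1z+\tfrac1v+\tfrac1w$ and $H_{st}=\tfrac1v+\tfrac1w$. The key structural identities are $v=s+t$ and $w=u+z$: since $v\ge s$ and $w\ge u$ one gets $\tfrac1v+\tfrac1w<\tfrac2s+\tfrac2u$, so $H_{ss}<0$ throughout $D_p^\circ$, and with $a_1=\tfrac2s+\tfrac2u$, $a_2=\tfrac1t+\tfrac1z$, $P=\tfrac1v+\tfrac1w$ the Hessian determinant collapses to $a_1a_2-(a_1+a_2)P$. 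Proving $a_1a_2>(a_1+a_2)P$ on all of $D_p^\circ$ is the crux of the argument; I would prove it by exploiting the pairings $v=s+t$, $w=u+z$ directly (the crude bound $\tfrac1{s+t}\le\tfrac14(\tfrac1s+\tfrac1t)$ is too lossy near the edges). Strict concavity then forces uniqueness of the maximizer, while the boundary behaviour $H_s\to+\infty$ as $s\to0$, $H_s\to-\infty$ as $u\to0$, $H_t\to+\infty$ as $t\to0$, $H_t\to-\infty$ as $z\to0$ shows the gradient points inward along $\partial D_p$, so the maximizer $(s^*,t^*)$ lies in $D_p^\circ$, where $C(s^*,t^*)$ is finite and positive.

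Finally, for the uniform bound \eqref{unif-bd-BN} the pointwise prefactor $C(s,t)$ degenerates near $\partial D_p$, so I would replace the asymptotics by two-sided entropy bounds: $\binom{n}{k}\le \e^{n h(k/n)}$ for the three numerator binomials and $\binom{n}{k}\ge \tfrac{c}{\sqrt n}\,\e^{n h(k/n)}$ for the denominator, where $h$ is the binary entropy and $nh(k/n)$ is exactly the order-$N$ Stirling term. Here the denominator's ratio $n_2/(n_1/2+n_2)\to \tfrac{2p}{1+p}$ stays bounded away from $0$ and $1$, so the sharp lower bound applies cleanly and its $1/\sqrt n$ inverts to the single factor $\sqrt N$; the numerator weights $(ar^2)^\ell r^k$ are exact exponentials, the exponent sums to precisely $NH(s,t)$, and the constant is uniform over $(s,t)\in D_p$, including the degenerate regimes $sN,tN=O(1)$. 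The main obstacle is the determinant positivity in the concavity step; the remaining parts are careful but routine bookkeeping with Stirling's formula.
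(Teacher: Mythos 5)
Your proposal matches the paper's proof essentially step for step: Stirling's formula applied to the four binomial coefficients of \eqref{def_Bmlk} with exactly your $(\alpha,\beta)$ identifications yields \eqref{asymptB}, concavity is established by checking that the Hessian has a negative eigenvalue together with a positive determinant, interiority of the maximizer follows from the stationarity conditions (the paper uses the explicit system \eqref{syst_crit_point} and the finiteness and positivity of $ar^2$ and $r$ for $B\neq 0$, which is equivalent to your inward-pointing-gradient argument), and the uniform bound \eqref{unif-bd-BN} is obtained from two-sided Stirling estimates. The determinant positivity that you single out as the crux is asserted without further detail in the paper as well, and your pairing observation $v=s+t$, $w=u+z$ does close it: since $(x,y)\mapsto xy/(x+y)$ is concave and homogeneous of degree one, hence superadditive, one gets $\frac{vw}{v+w}\geq\frac{su}{s+u}+\frac{tz}{t+z}>\frac{su}{2(s+u)}+\frac{tz}{t+z}$, which is precisely $1/P>1/a_1+1/a_2$, i.e.\ $a_1a_2>(a_1+a_2)P$.
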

\begin{proof}
Using Stirling's approximation in the form $n! =  \e^{-n}n^n \sqrt{2 \pi n}\, (1+o(1))$ for $n$ large, taking $a,b \in \mathbb{N}$, we can rewrite the binomial coefficients as
	\begin{equation*}
	{{b\,n\choose {a\,n}}} \,= \, \e^{\left[b \log{b}-a \log{a}-(b-a) \log{(b-a)}\right]n} \cdot \frac{\sqrt{b}\, 	(1+o(1))}{\sqrt{a}\sqrt{b-a}\sqrt{2 \pi n}}.
	\end{equation*}
Plugging  the previous formula into  \eqref{def_Bmlk},
%
%
%
%
%
%
then  \eqref{asymptB} follows.
By inspection, $H(s,t)$ and $C(s,t)$ are smooth functions in $D_{p}^{\circ}$ {for $B \neq 0$}. The function $H(s,t)$ can be further extended by continuity to the boundary $\partial D_{p}$ of  $D_{p}$, while $C(s,t)$ cannot be defined in $D_{p}\setminus D_{p}^{\circ}$ since it is unbounded there. In order to prove concavity of $H(s,t)$, we check that its Hessian matrix $\mathbf{Q}(s,t)$ is negative definite on each point of $D_{p}^{\circ}$. For this, we compute the Hessian $\mathbf{Q}(s,t)$ as 
	\begin{small}
	\eqn{\nn
	\left(\begin{array}{cc}
	\displaystyle \frac{1}{\frac{1-p}{2}-s+p-t} + \frac{1}{s+t} - \frac{2}{\frac{1-p}{2}-s} - \frac{2}{s} &  \displaystyle  	\frac{1}{\frac{1-p}{2}-s+p-t} + \frac{1}{s+t}\\
	\displaystyle\frac{1}{\frac{1-p}{2}-s+p-t} + \frac{1}{s+t} &  \displaystyle \frac{1}{\frac{1-p}{2}-s+p-t} + \frac{1}{s+t} - 	\frac{1}{p-t} - \frac{1}{t}\\
	\end{array}\right).
	}
	\end{small}
The eigenvalues $\mu^+$ and $\mu^-$ of  $\mathbf{Q}(s,t)$ are
\begin{small}
	\begin{align*}
	\mu^{\pm} &=\frac{1}{2}\Bigg[\frac{2}{\frac{1+p}{2}-s-t} + \frac{2}{s+t} - 	\frac{2}{\frac{1-p}{2}-s} - \frac{2}{s}- \frac{1}{p-t} - \frac{1}{t}\\
	&\quad \pm \sqrt{\left( 	\frac{2}{\frac{1-p}{2}-s} + \frac{2}{s}- \frac{1}{p-t} - \frac{1}{t}\right)^2 + 4\left(\frac{1}{\frac{1+p}{2}-s-t} + \frac{1}{s+t}\right)^2}\Bigg].
	\end{align*}
\end{small}
\noindent
We can easily see that $\mu^{-} < 0$, and in order to show that also $\mu^{+}$ is negative, we observe that the determinant of the Hessian matrix is positive.
Therefore, $H(s,t)$ is strictly concave in  $D_{p}^{\circ}$ and, by continuity, concave in $D_{p}$.
\noindent
Concavity implies that $(s,t)\mapsto H(s,t)$ has a unique global maximum in $D_{p}$, the uniqueness follows by strict concavity and the fact that the maximizer is not on the boundary. In order to find $(s^*, t^*) \, := \, \mathop{\rm argmax}\limits_{(s,t)\in D_{p}}H(s,t)$,  and to prove that it lies in $D_{p}^{\circ}$, we calculate
	\begin{align}\label{H_diff_su _t_e_s}\nn
		\frac{\partial H(s,t)}{\partial s} \, &= \, 2\log\left(\frac{1-p}{2} -s\right) 
		- \log\left(\frac{1+p}{2}-s-t\right) + \log(s+t) - 2\log(s)\\ 
		& \quad \,+ \log(ar^2), \\ 
		\frac{\partial H(s,t)}{\partial t} \, &= \, \log(s+t) - \log(t) - \log\left(\frac{1+p}{2}-s-t\right) + \log(p-t) + \log(r),
	\end{align}
so that $(s^*, t^*)$ is a solution of the system
	\be\label{syst_crit_point}
	\left\{\begin{array}{ll} \frac{\left(\frac{1-p}{2} -s\right)^2}{s^2} \frac{(s+t)}{\left(\frac{1+p}{2}-s-t\right)} \, = \, 	\frac{1}{ar^2}, \\
	\frac{(p-t)}{t}\frac{(s+t)}{\left(\frac{1+p}{2}-s-t\right)} \, = \, \frac{1}{r}.
	\end{array}\right.
	\ee
{Since $B\neq 0$, and then both $ar^2$ and $r$ are finite and larger than zero, it easy to see from (\ref{syst_crit_point}) that the maximum point cannot be attained on the boundary.}

The proof of \eqref{unif-bd-BN} follows similarly, now using that $\e^{-n}n^n \sqrt{2 \pi n}\leq n!\leq $\\$ \e^{-n}n^n \sqrt{2 \pi n}(1+\frac{1}{12n})$ for every $n\geq 1.$ The power of $N$ is needed to make the estimate uniform, e.g., by bounding $s((1-p)/2-s)\geq c/N$ uniformly for $s\geq 1/N$.
\end{proof}

\paragraph{Asymptotics by Laplace method.}
In the next Lemma we compute the asymptotic behavior of \eqref{gen_fun_numb_lin} by using the discrete analogue
of Laplace method.
%
\begin{lemma}[Asymptotics of {$Q_{\sss N}[\prod_{l=2}^{\infty} c_l(B)^{N_l}\mid M_{\sss N}=m]$}]
\label{asy_M_N_equal_zero}
For every $m\geq0$ fixed,
	\begin{align}
	\label{asymptotics_m_>0}
	&Q_{\sss N}[\prod_{l=2}^{\infty} c_l(B)^{N_l}\mid M_{\sss N}=m] \\
	& = \, 2\pi\, C(s^{*},t^{*})\, (\det \mathbf{Q}(s^{*},t^{*}))^{-\frac 1 2} \left(b^*\right)^{m} 
	\exp\{NH(s^{*},t^{*})\} (1+o(1)),\nn
	\end{align}
where $(s,t)\mapsto H(s,t)$ and $(s,t)\mapsto C(s,t)$ are defined in Lemma  \ref{lemma_asymptB}, 
$(s^{*},t^{*})$ is the maximum point of $H(s,t)$, $ \mathbf{Q}(s,t)$ is the Hessian matrix of $H$ and
\begin{equation*}
b^* = \left (\frac{1+p}{2p}\right)\left (\frac{p-t^{*}}{\frac{1+p}{2}-s^{*}-t^{*}}\right).
\end{equation*}
\end{lemma}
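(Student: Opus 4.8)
The plan is to evaluate the double sum provided by Lemma \ref{lem-ZN-condM=m},
\[
Q_{\sss N}\big[\prod_{l=2}^{\infty} c_l(B)^{N_l}\mid M_{\sss N}=m\big]
=\sum_{\ell=0}^{n_1/2}\sum_{k=0}^{n_2-m} B_{\ell,k}^{\sss (N)}(n_2-m),
\]
by the two-dimensional discrete Laplace (saddle-point) method, using the pointwise expansion \eqref{asymptB} and the uniform bound \eqref{unif-bd-BN} of Lemma \ref{lemma_asymptB}. Writing $s=\ell/N$ and $t=k/N$, the indices $(\ell,k)$ correspond to points $(s,t)$ that fill $D_{p}$ as $N\to\infty$, and the summand is controlled by the strictly concave function $H$, whose unique maximizer $(s^{*},t^{*})$ lies in the interior $D_{p}^{\circ}$.

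First I would reduce to the case $m=0$ by removing the shift from $n_2$ to $n_2-m$. Since $m$ is fixed, writing out \eqref{def_Bmlk} and cancelling the factors that do not involve $n_2-m$ yields
\[
\frac{B_{\ell,k}^{\sss (N)}(n_2-m)}{B_{\ell,k}^{\sss (N)}(n_2)}
=\frac{\binom{n_1/2-\ell+n_2-m-k-1}{n_2-m-k}}{\binom{n_1/2-\ell+n_2-k-1}{n_2-k}}\,
\frac{\binom{n_1/2+n_2-1}{n_2}}{\binom{n_1/2+n_2-m-1}{n_2-m}},
\]
a product of $O(m)$ ratios of consecutive multiset coefficients of the form $\binom{a+j-1}{j}/\binom{a+j}{j+1}=(j+1)/(a+j)$. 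The first quotient is at most $1$, and the second does not depend on $(\ell,k)$ and is bounded uniformly in $N$, so this ratio is uniformly bounded over the whole index range; moreover, when $(\ell,k)=(s^{*}N+o(N),t^{*}N+o(N))$ it converges as $N\to\infty$ to
\[
\Big(\frac{p-t^{*}}{\tfrac{1+p}{2}-s^{*}-t^{*}}\Big)^{m}\Big(\frac{\tfrac{1+p}{2}}{p}\Big)^{m}=(b^{*})^{m}.
\]
Since the sum concentrates near the maximizer (established below) while the ratio stays bounded elsewhere, this gives $\sum_{\ell,k}B_{\ell,k}^{\sss (N)}(n_2-m)=(b^{*})^{m}(1+o(1))\sum_{\ell,k}B_{\ell,k}^{\sss (N)}(n_2)$.

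It then remains to prove $\sum_{\ell,k}B_{\ell,k}^{\sss (N)}(n_2)=2\pi\,C(s^{*},t^{*})(\det\mathbf{Q}(s^{*},t^{*}))^{-1/2}\e^{NH(s^{*},t^{*})}(1+o(1))$. I would split the index set into a window $\{|\ell-s^{*}N|\le N^{1/2+\epsilon},\,|k-t^{*}N|\le N^{1/2+\epsilon}\}$ and its complement. Outside the window, the strict concavity of $H$ and the negative-definiteness of $\mathbf{Q}(s^{*},t^{*})$ give a quadratic bound $H(s,t)\le H(s^{*},t^{*})-c\|(s,t)-(s^{*},t^{*})\|^{2}$ near the maximizer together with a uniform gap $H\le H(s^{*},t^{*})-\delta$ away from it; inserting these into \eqref{unif-bd-BN} bounds the complementary sum by $\e^{NH(s^{*},t^{*})}O(N^{5/2}\e^{-cN^{2\epsilon}})=\e^{NH(s^{*},t^{*})}o(1)$, which is negligible. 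Inside the window, I would use \eqref{asymptB}, replace $C(s,t)$ by $C(s^{*},t^{*})$ by continuity, and Taylor-expand $H(s,t)=H(s^{*},t^{*})+\tfrac12(s-s^{*},t-t^{*})\mathbf{Q}(s^{*},t^{*})(s-s^{*},t-t^{*})^{\mathsf T}+o(\|(s,t)-(s^{*},t^{*})\|^{2})$, the gradient vanishing because $(s^{*},t^{*})\in D_{p}^{\circ}$. Setting $u=\ell-s^{*}N$, $v=k-t^{*}N$, the window sum becomes a Riemann sum for a Gaussian integral,
\[
\frac{C(s^{*},t^{*})}{N}\e^{NH(s^{*},t^{*})}\sum_{u,v}\e^{\frac{1}{2N}(u,v)\mathbf{Q}(s^{*},t^{*})(u,v)^{\mathsf T}}
\sim\frac{C(s^{*},t^{*})}{N}\e^{NH(s^{*},t^{*})}\cdot\frac{2\pi N}{\sqrt{\det\mathbf{Q}(s^{*},t^{*})}},
\]
which gives the constant $2\pi C(s^{*},t^{*})(\det\mathbf{Q}(s^{*},t^{*}))^{-1/2}$ and, combined with the factor $(b^{*})^{m}$ above, proves \eqref{asymptotics_m_>0}.

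The main obstacle is making the discrete-sum-to-Gaussian-integral passage rigorous given that $C(s,t)$ is only bounded on $D_{p}^{\circ}$ (it blows up on $\partial D_{p}$, as noted in the proof of Lemma \ref{lemma_asymptB}). This forces the localisation to a window of width $N^{1/2+\epsilon}$ around the interior maximizer, where $C$ is smooth and bounded, while the tails are handled through the coarser but globally valid estimate \eqref{unif-bd-BN}, whose extra factor $N^{1/2}$ is harmless once the quadratic decay of $H$ is used. The remaining technical point is to verify that the error in the Taylor expansion of $H$ is uniformly $o(1)$ over the window, so that it may be factored out of the sum before comparing with the integral.
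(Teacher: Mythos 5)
Your proposal is correct and follows essentially the same route as the paper: a two-dimensional discrete Laplace analysis of the double sum from Lemma \ref{lem-ZN-condM=m}, localising near the interior maximizer $(s^{*},t^{*})$ via the pointwise expansion \eqref{asymptB}, controlling the tails with the uniform bound \eqref{unif-bd-BN} and the strict concavity of $H$, comparing the inner sum with a Gaussian lattice sum to produce $2\pi C(s^{*},t^{*})\det(\mathbf{Q}(s^{*},t^{*}))^{-1/2}$, and reducing general $m$ to $m=0$ through the ratio $B_{\ell,k}^{\sss(N)}(n_2-m)/B_{\ell,k}^{\sss(N)}(n_2)\to (b^{*})^{m}$. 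The only differences are cosmetic: you localise on a window of width $N^{1/2+\epsilon}$ (which makes the Taylor remainder uniformly $o(1)$ directly, for $\epsilon<1/6$) where the paper uses a $\delta N$-window with a $\pm c\delta$ perturbation of the Hessian and sends $\delta\to0$, and you perform the $m$-reduction before rather than after the $m=0$ computation.
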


\proof 
We start by proving (\ref{asymptotics_m_>0}) for $m=0$. Due to Lemma \ref{lemma_asymptB},
we may estimate the asymptotic behavior of the double sum 
	\be\label{sumB1}
	K(N):= \sum_{\ell=0}^{n_1/2}\sum_{k=0}^{n_2} B_{\ell,k}^{\sss (N)}(n_2),
	\ee
by making use of the function $f_{\sss N}(s,t)=N^{-1}C(s,t)\exp\{NH(s,t)\}$ 
that appeared in \eqref{asymptB}. 
The correspondence between the two sets of variables $(\ell,k)$ and  $(s,t)$ is given by the simple transformation $s=\ell/N$ and $t=k/N$.  We denote this transformation by $T_{N}$. 


Let us define $\ell^{*}_{\sss N}:= \lfloor s^{*} N\rfloor$, $k^{*}_{\sss N}:= \lfloor t^{*} N\rfloor$  and  introduce $0<\delta < \min\{p, (1-p)/2\}$. The precise value of $\delta$ will be chosen later on. 
We partition the domain of the summation appearing in the sum of $B_{\ell,k}^{\sss (N)}(n_2)$
	\begin{equation*}
	\Lambda_{\sss N}=\{(\ell,k)\colon \ell=1,\dots, n_{1}/2, k=1,\ldots n_{2} \}, 
	\end{equation*}
into two subsets
	\begin{equation*}
	U_{\delta,{\sss N}}=\{(\ell,k)\in \Lambda_{\sss N} \colon |\ell - \ell^{*}_{\sss N}|\le \delta N+1,  
	|k - k^{*}_{\sss N}|\le \delta N+1\},\quad  U_{\delta,{\sss N}}^{c}=\Lambda_{\sss N} 
	\backslash U_{\delta,{\sss N}}.
	\end{equation*}
The set $U_{\delta,{\sss N}}$ is to be considered as a neighborhood of $( \ell^{*}_{\sss N},  k^{*}_{\sss N})$, the ``maximum'' point of $B_{\ell,k}^{\sss (N)}(n_2)$.
%
%
We observe that $T_{\sss N}( U_{\delta,{\sss N}})$ is contained in the neighborhood of $(s^{*},t^{*})$ in $D_{p}$, i.e.,
	\begin{equation*}
	W_{\delta+\frac 1 N}=\{ (s,t)\in D_{p}\colon |s-s^{*}|\le \delta+\frac 1 N,\,  |t-t^{*}|\le \delta+\frac 1 N \},
	\end{equation*}
while $T_{\sss N}( U_{\delta,{\sss N}}^{c})$ is contained in its complement $W_{\delta+\frac 1 N}^{c}:=D_{p}\backslash W_{\delta+\frac 1 N}$. 
We rewrite \eqref{sumB1} as $K(N) = K_{1}(\delta,N) + K_{2}(\delta,N)$
where
	\be \label{defI1I2}
	K_{1}(\delta,N):= \sum_{(\ell,k)\in U_{\delta,{\sss N}}}B_{\ell,k}^{\sss (N)}(n_2), 
	\quad \quad K_{2}(\delta,N):= \sum_{(\ell,k)\in U_{\delta,{\sss N}}^{c}}B_{\ell,k}^{\sss (N)}(n_2).
	\ee
We aim to prove that the asymptotic behavior of $K(N)$ is given by $K_{1}(\delta,N)$, while $ K_{2}(\delta,N)$ gives a sub-dominant contribution. We start by proving the latter statement.\\

\paragraph{Bound on $\mathbf{K_{2}(\delta,N)}$.} 
Making use of \eqref{unif-bd-BN}, we upper bound
	\be\label{I2}
 	K_{2}(\delta,N)\leq CN^{1/2} \sum_{(\ell,k)\in U_{\delta,{\sss N}}^{c}}\exp \left\{NH (\ell/N,k/N) \right\}.
	\ee
Defining 
	\begin{equation*}
	M(\delta):=\sup_{\substack{|s-s^{*}|>\delta \\ |t-t^{*}|>\delta}} H(s,t)
	\ge \sup_{(s,t)\in U_{\delta,{\sss N}}^{c}} H(s,t)\;,
	\end{equation*}
since the values $(\ell/N,k/N)$ in \eqref{I2} belong to $W_{\delta+\frac 1 N}^{c}$ 
we can  bound $H(\ell/N,k/N)\leq M(\delta)$. We conclude that
	\be\label{I2a}
 	K_{2}(\delta,N)\le CN^{1/2} \exp\{NM(\delta)\}|U_{\delta,{\sss N}}^{c}|\leq CN^{5/2} \exp\{NM(\delta)\},
	\ee
which, together with $M(\delta)<H(s^{*},t^{*})$, implies that 
	\be\label{I2subdominant}
	\exp\{-NH(s^{*},t^{*})\} \, K_2(\delta, N) \, \rightarrow 0, \quad \; \mbox{as} \; N \rightarrow \infty.
	\ee
Let us remark that, besides the condition  $0<\delta < \min\{p, (1-p)/2\}$ (which guarantees that $(\ell,k)$ and $(s,t)$ are contained in the domains of $B_{\ell,k}^{\sss (N)}(n_2)$ and $f_{\sss N}(s,t)$), in the previous argument no further condition has  been imposed on $\delta$.\\

\paragraph{Asymptotics of $\mathbf{ K_{1}(\delta,N)}$.} 
Here we consider the sum $K_{1}(\delta,N)$ defined in \eqref{defI1I2}. 
Choose $\varepsilon>0$ arbitrary and small. By continuity of $C(s,t)$, we can choose $\delta>0$ small enough so that, for large $N$,
	\begin{equation*}
	C(s^{*},t^{*})-\varepsilon\le C(s,t)\le C(s^{*},t^{*})+\varepsilon,
	\quad \mbox{for all}\quad (s,t)\in W_{\delta+\frac 1 N} 
	\end{equation*}
Then using  \eqref{asymptB}, we obtain
	\begin{align}\label{boundI1}
		K_{1}(\delta,N)\le
		\frac{C(s^{*},t^{*})+\varepsilon}{N}  
		\sum_{(\ell,k)\in U_{\delta,N}}\exp \left\{NH\left (\frac{\ell}{N},\frac{k}{N}\right)\right\}(1+o(1)),
	\end{align}
and a similar lower bound with $C(s^{*},t^{*})+\varepsilon$ replaced by $C(s^{*},t^{*})-\varepsilon$.
Recalling that $\mathbf{Q}(s,t)$ is the Hessian matrix of $H(s,t)$, by Taylor expanding up to second order and using that $(s^*,t^*)$ is the maximum, we can write
	\begin{equation*}
	H(s,t)-H(s^{*},t^{*})\leq \frac 1 2 {\mathbf x}\cdot \mathbf{Q}(s^{*},t^{*}) {\mathbf x} +c\delta \|{\mathbf x}\|^2/2, 
	\quad \text{for all }(s,t)\in  W_{\delta+\frac 1 N} 
	\end{equation*}
where ${\mathbf x}=(s-s^{*},t-t^{*})$, and a similar lower bound with $c\delta$ replaced by $-c\delta$. 

By multiplying \eqref{boundI1} by $\exp[-NH(s^{*},t^{*})]$ and applying the previous inequality, we obtain
	\begin{align}\label{upperbI1}\nn
	&\exp[-NH(s^{*},t^{*})] K_{1}(\delta,N)\\
	&\le \frac{C(s^{*},t^{*})+\varepsilon}{N}
	\sum_{(\ell,k)\in U_{\delta,N}}
	\exp \left\{\frac N2 {\mathbf x}^T\cdot \mathbf{Q}(s^{*},t^{*}) {\mathbf x} 
	+c\delta N\|{\mathbf x}\|^2/2\right\},
	\end{align}
(where ${\mathbf x}$ is computed with $s=\ell/N$ and $t=k/N$) and a similar lower bound with $C(s^{*},t^{*})+\varepsilon$ replaced with $C(s^{*},t^{*})-\varepsilon$ and 
$+c\delta N\|{\mathbf x}\|^2$ replaced with $-c\delta N\|{\mathbf x}\|^2$. The last step is bounding the sum
	\begin{equation*}
	\tilde{K}_{1}(\delta,N):=\hspace {-0.4cm} \sum_{(\ell,k)\in U_{\delta,N}}\hspace{-0.4cm} 
	\exp \left\{\frac N2 {\mathbf x}^T\cdot (\mathbf{Q}(s^{*},t^{*})\pm c\delta\mathbf{I}) {\mathbf x} 
	\right\},
	\end{equation*}
Now we can substitute the finite sum in the previous display with the infinite one, since the difference is exponentially small \cite{dB}. It is known that (as can be seen by extending \cite[(3.9.4)]{dB} to two-dimensional sums) that 
	\begin{equation*}
	\sum_{\mathbf{j}\in \mathbb{Z}^2}\e^{-\mathbf{j}^T \mathbf{A} \mathbf{j}/(2N)} 
	= \frac{2\pi N}{\det(\mathbf{A})^{1/2}}(1+o(1)),
	\end{equation*}
Therefore,
	\begin{equation*}
	\tilde{K}_{1}(\delta,N) = \frac{2\pi}{\det(\mathbf{Q}(s^{*},t^{*})\pm c\delta \mathbf{I})^{1/2}}\,N(1+o(1)).
	\end{equation*}
From the previous equation, recalling \eqref{upperbI1}, we obtain
	\begin{equation*}
	\exp[-NH(s^{*},t^{*})] K_{1}(\delta,N)\le 
	\frac{2\pi (C(s^{*},t^{*})+\varepsilon)}{\det(\mathbf{Q}(s^{*},t^{*})-c\delta \mathbf{I})^{1/2}}\,(1+o(1)),
	\end{equation*}
and a similar lower bound with $C(s^{*},t^{*})+\varepsilon$ replaced with $C(s^{*},t^{*})-\varepsilon$
and $\mathbf{Q}(s^{*},t^{*})-c\delta \mathbf{I}$ with $\mathbf{Q}(s^{*},t^{*})+c\delta \mathbf{I}$.
Since $\varepsilon$ is arbitrary, the previous inequality implies
	\begin{equation*}
	\lim_{N\to \infty}\exp[-NH(s^{*},t^{*})] K(N)=\frac{2\pi\, C(s^{*},t^{*})}{\det(\mathbf{Q}(s^{*},t^{*}))^{1/2}},
	\end{equation*}
which 
proves the claim. 

Next, we want to generalize the previous result by computing the asymptotic of  \eqref{gen_fun_numb_lin} in the case $m\ne 0$. We start by rewriting  \eqref{gen_fun_numb_lin}  in the following fashion:
	\begin{equation*}
	Q_{\sss N}[\prod_{l=2}^{\infty} c_l(B)^{N_l}\mid M_{\sss N}=m]
	=\sum_{\ell=0}^{n_1/2}\sum_{k=0}^{n_2-m}\,	G^{\sss (N)}_{\ell,k}(m;n_{2}) 	B_{\ell,k}^{\sss (N)}(n_2)\, ,
	\end{equation*}
where
	\begin{equation*}
	G^{\sss (N)}_{\ell,k}(m;n_{2}):=\frac{B_{\ell,k}^{\sss (N)}(n_2-m)}{B_{\ell,k}^{\sss (N)}(n_2)} 
	\,= \, \frac{\prod_{j=1}^{m} \left ( \frac{n_{1}}{2}+n_{2}-j\right)\prod_{j=0}^{m-1}(n_{2}-k-j)}
 	{\prod_{j=0}^{m-1} \left (n_{2}-j\right)\prod_{j=1}^{m} 
 	\left ( \frac{n_{1}}{2}+n_{2}-\ell-k-j\right)},
	\end{equation*}
for $m=0,1,\ldots, n_{2}$. By defining the function $F(s,t;m)$ on $D_{p}^{\circ}$ given by
	\begin{equation*}
	F(s,t;m)=\left (\frac{1+p}{2p}\right)^{m}\left (\frac{p-t}{\frac{1+p}{2}-s-t} \right)^{m},
	\end{equation*}
we obtain that 
	\begin{equation*}
	G^{\sss (N)}_{\ell,k}(m;n_{2})=F\left(\frac{\ell}{N},\frac{k}{N};m\right)(1+o(1))
	\end{equation*}
as $N\to \infty$. Then the proof is obtained from that for $m=0$ by replacing $C(s,t)$ by $C(s,t)F(s,t)$.
\qed

\begin{remark}[Bound on $b^*$]\label{bstar_less_bc}
Since $(s^{*},t^{*})\in D_{p}^{\circ} $,
	\be\nn
	\frac{p-t^{*}}{\frac{1+p}{2}-s^{*}-t^{*}}=\frac{p-t^{*}}{(\frac{1-p}{2}-s^{*})+(p-t^{*})}<1,
	\qquad
	\text{so that}
	\qquad
	b^*<\frac{1+p}{2p}.
	\ee
This will allow us to use in the following the moment generating function $\q[(b^*)^M]$ defined in \eqref{mom_gen_func_b}.
\end{remark}
\medskip

\paragraph{Boundary contribution.} Lemma \ref{asy_M_N_equal_zero} proves, for any {\em fixed} $0\le m < \infty$,  the asymptotic exponential growth of $Q_{\sss N}[\prod_{l=2}^{\infty} c_l(B)^{N_l}\mid M_{\sss N}=m]$ as $N\to\infty$.  However in formula \eqref{prob_tot_prop4.1} we need to sum over a range of values of $m$ that increases with the 
volume $N$. In order to overcome this problem, in the proof of Proposition \ref{prop-exp-part-funct} we  introduce a cut-off in the sum over $m$ 
(and then send the cut-off to infinity at the end). In doing so we need to exclude the contribution arising from $B_{\ell,k}^{(N)}$ for $\ell$ close to the 
boundary $n_1/2$. This is achieved in the following Lemma.
\begin{lemma}[Boundary contribution]\label{contribution_l_magg}
For every  $\varepsilon >0$ sufficiently small, as $N\to\infty$
	\begin{equation*}
	\sum_{m=0}^{n_2}\expec_m[\bar{Z}_{m}^{\sss(2)}(B)]
	\sum_{\ell>(1-\varepsilon)\frac{n_1}{2}}^{n_1/2}\sum_{k=0}^{n_2-m}\,B_{\ell,k}^{\sss (N)}(n_2-m) \q(M_{\sss N}=m) 
	\, =\, o(\e^{N H(s^{*},t^{*})}).
	\end{equation*}
\end{lemma}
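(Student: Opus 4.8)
The plan is to reduce the triple sum to a single large-deviation estimate in the three scaled variables $(x,s,t)=(m/N,\ell/N,k/N)$, and to show that the constraint $\ell>(1-\varepsilon)n_1/2$ pushes the resulting rate function strictly below its global maximum $H(s^{*},t^{*})$. First, since $\bar{Z}_{m}^{\sss(2)}(B)=\prod_i\big(1+r(B)^{L}\big)$ with $0<r(B)<1$, Lemma \ref{lem-part-function-tori} (with $\alpha=1$, $\gamma=r(B)$) furnishes a constant $A$ with $\expec_m[\bar{Z}_{m}^{\sss(2)}(B)]\le A$ uniformly in $m$, so this factor may be pulled out. The crucial step is then an exact cancellation: multiplying the second expression for $\q(M_{\sss N}=m)$ in \eqref{law-MN} by \eqref{def_Bmlk} with $n_2$ replaced by $n_2-m$, the normalizing binomial $\binom{n_1/2+n_2-m-1}{n_2-m}$ cancels, leaving
\[
\q(M_{\sss N}=m)\,B_{\ell,k}^{\sss(N)}(n_2-m)=C_{\sss N}\,2^{-2m}\binom{2m}{m}\binom{n_1/2}{\ell}(ar^2)^{\ell}r^{k}\binom{\ell+k-1}{k}\binom{n_1/2-\ell+n_2-m-k-1}{n_2-m-k},
\]
where $C_{\sss N}=2^{n_2}(n_1-1)!!\,n_2!/(n_1+2n_2-1)!!$ is independent of $(m,\ell,k)$. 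This removes the blow-up that would otherwise arise from the ratio $G^{\sss(N)}_{\ell,k}(m;n_2)$ near the boundary and turns the summand into a clean product of binomials.

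Next I would apply the two-sided Stirling bound already used for \eqref{unif-bd-BN} to the right-hand side. This gives a uniform estimate $\q(M_{\sss N}=m)\,B_{\ell,k}^{\sss(N)}(n_2-m)\le \mathrm{poly}(N)\,\e^{N\tilde H(x,s,t)}$ on the compact polytope of admissible $(x,s,t)$, where $\tilde H$ is explicit, reduces to the rate function of Lemma \ref{lemma_asymptB} at $x=0$, i.e.\ $\tilde H(0,s,t)=H(s,t)$, and satisfies
\[
\frac{\partial}{\partial x}\tilde H(x,s,t)=\log\frac{p-x-t}{\tfrac{1+p}{2}-s-t-x}.
\]
Since $\big(\tfrac{1+p}{2}-s-t-x\big)-(p-x-t)=\tfrac{1-p}{2}-s\ge 0$ on $D_{p}$, this derivative is $\le 0$, so $\tilde H(x,s,t)\le \tilde H(0,s,t)=H(s,t)$ for every admissible $x\ge 0$. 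Hence the global maximum of $\tilde H$ equals $H(s^{*},t^{*})$, attained at $(0,s^{*},t^{*})$ with $s^{*}<\tfrac{1-p}{2}$ interior by Lemma \ref{lemma_asymptB}.

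Finally I would use the interiority and uniqueness of the maximizer of $H$. Choosing $\varepsilon$ so small that $(1-\varepsilon)\tfrac{1-p}{2}>s^{*}$, the closed region $\{s\ge(1-\varepsilon)\tfrac{1-p}{2}\}$ excludes $(s^{*},t^{*})$, so by continuity and compactness of $H$ on $D_{p}$ together with the strict concavity of \eqref{defH},
\[
\eta(\varepsilon):=\sup\{H(s,t):(s,t)\in D_{p},\ s\ge(1-\varepsilon)\tfrac{1-p}{2}\}<H(s^{*},t^{*}).
\]
Because $\tilde H\le H$, every summand with $\ell>(1-\varepsilon)n_1/2$ is at most $\mathrm{poly}(N)\,\e^{N\eta(\varepsilon)}$; bounding $\expec_m\le A$ and the number of triples $(m,\ell,k)$ by $O(N^{3})$ yields a total of order $\mathrm{poly}(N)\,\e^{N\eta(\varepsilon)}=o(\e^{NH(s^{*},t^{*})})$, which is the assertion.

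The main obstacle is the uniform Stirling control: one must justify that a single polynomial prefactor works simultaneously over the whole polytope, including triples where a coordinate such as $s$, $\tfrac{1-p}{2}-s$ or $p-x-t$ is of order $1/N$ and $H$ or $\tilde H$ sits near a boundary singularity; this is handled exactly as for \eqref{unif-bd-BN}, raising the power of $N$ to absorb the $\sqrt{\cdot}$ factors. A secondary point is to verify that the identity $\tilde H(0,s,t)=H(s,t)$ holds with the correct constant, i.e.\ that the exponential rate of $C_{\sss N}$ equals $\tfrac{1-p}{2}\log\tfrac{1-p}{2}-\tfrac{1+p}{2}\log\tfrac{1+p}{2}+p\log p$, matching the normalization in \eqref{defH}; this again follows from the same Stirling expansion applied to the double factorials in $C_{\sss N}$.
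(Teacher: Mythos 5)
Your proposal is correct, and it reaches the conclusion by a route that differs from the paper's in the one step that matters: how the $m$-dependence of the summand is controlled. The paper keeps $\q(M_{\sss N}=m)$ and $B_{\ell,k}^{\sss(N)}(n_2-m)$ together as $D_{\ell,k}^{\sss(N)}(n_1,n_2,m)$ and proves the discrete ratio bound \eqref{d_m1_su_d_m}, namely $D(m+1)/D(m)\le n_2/(n_2-1)$, which reduces everything to the $m=0$ term at the price of a factor $(n_2/(n_2-1))^{n_2}\le a$, after which \eqref{unif-bd-BN} gives the exponential bound $\exp\{NH(\ell/N,k/N)\}$. You instead observe the exact cancellation of the normalizing binomial $\binom{n_1/2+n_2-m-1}{n_2-m}$ between \eqref{law-MN} and \eqref{def_Bmlk} (which is correct: the product is $C_{\sss N}\,2^{-2m}\binom{2m}{m}$ times a clean product of binomials), run Stirling once more to get a three-variable rate $\tilde H(x,s,t)$, and show $\partial_x\tilde H=\log\frac{p-x-t}{(1+p)/2-s-t-x}\le 0$, hence $\tilde H\le H$. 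These are really the continuous and discrete forms of the same monotonicity --- the paper's ratio $\frac{n_2-m-k}{n_1/2-\ell+n_2-m-k-1}$ is exactly $\exp(\partial_x\tilde H)$ at the lattice point --- but your version bounds the ratio by $1$ at the level of the rate function rather than by $n_2/(n_2-1)$, so you avoid the $(n_2/(n_2-1))^{n_2}$ device; the price is that you must redo the uniform Stirling estimate over the full three-dimensional polytope (including corners where $p-x-t$ or $\frac{1-p}{2}-s$ is $O(1/N)$ or zero), whereas the paper only needs the already-established two-variable bound \eqref{unif-bd-BN}. You correctly flag this as the remaining technical burden, and it is handled by the same polynomial-prefactor trick as in Lemma \ref{lemma_asymptB}. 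The endgame --- $\expec_m[\bar{Z}_m^{\sss(2)}(B)]\le A$ from Lemma \ref{lem-part-function-tori}, strict gap $\sup\{H(s,t)\colon s\ge(1-\varepsilon)\frac{1-p}{2}\}<H(s^*,t^*)$ from interiority of $(s^*,t^*)$, and an $O(N^3)$ count of terms --- coincides with the paper's.
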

\begin{proof}
By Lemma \ref{lemma_asymptB}, we know that $s^* < \frac{1-p}{2}$. Defining
	\begin{equation*}
	\underline{H} (s^*,t^*) \, := \, \sup_{\substack{(s,t) \\ s > (1-\varepsilon)\frac{1-p}{2}}}H(s,t),
	\end{equation*}
it follows that $\, \underline{H} (s^*,t^*) \, < \, H(s^*,t^*)$. 
Further, we define 
	\begin{equation*}
	D_{\ell,k}^{\sss (N)}(n_1,n_2,m)=B_{\ell,k}^{\sss (N)}(n_2-m)\q(M_{\sss N}=m),
	\end{equation*}
and using \eqref{law-MN} and the following bound
	\begin{equation*}
	2^{-2(m+1)}{{2(m+1)}\choose{m+1}}\leq 2^{-2m}{{2m}\choose{m}},
	\end{equation*}
we obtain
	\begin{align}\label{d_m1_su_d_m}
	\frac{D_{\ell,k}^{\sss (N)}(n_1,n_2,m+1)}{D_{\ell,k}^{\sss (N)}(n_1,n_2,m)}
	&\leq \frac{n_2-m-k}{n_1/2-\ell + n_2-m-k-1}\nn \\
	&\leq \frac{n_2}{n_1/2-\ell+n_2-1} \leq \frac{n_2}{n_2-1}.
	\end{align}
As a consequence, using \eqref{unif-bd-BN}, 
	\begin{align*}
	B_{\ell,k}^{\sss (N)}(n_2-m)\q(M_{\sss N}=m)\, &\leq \, B_{\ell,k}^{\sss (N)}(n_2)\q(M_{\sss N}=0) 	\left(\frac{n_2}{n_2-1}\right)^m\\
	&\leq a N^{1/2} \exp\{H(\ell/N,k/N)\},
	\end{align*}
since, $\left(\frac{n_2}{n_2-1}\right)^m\leq \left(\frac{n_2}{n_2-1}\right)^{n_2}\leq a$ for $m\leq n_2$ and some $a>\e$.
Therefore, using this inequality together with Lemma \ref{lem-part-function-tori}, we obtain that 
	\begin{align*}
		&\,\e^{-NH(s^{*},t^{*})} \, \sum_{m=0}^{n_2}\expec_m[\bar{Z}_{m}^{\sss(2)}(B)]
		\sum_{\ell>(1-\varepsilon)\frac{n_1}{2}}^{n_1/2}\sum_{k=0}^{n_2-m}\,B_{\ell,k}^{\sss (N)}(n_2-m) \q(M_{\sss N}=m)
		\\  
		&\qquad\leq a A n_2N^{1/2} \,\e^{-NH(s^{*},t^{*})} \, \sum_{\ell>(1-\varepsilon)
		\frac{n_1}{2}}^{n_1/2}\sum_{k=0}^{n_2} \exp\{H(\ell/N,k/N)\} \\ 
		&\qquad \leq a A N^{7/2} \e^{N(\underline{H}(s^{*},t^{*})-H(s^{*},t^{*}))}\stackrel{N \rightarrow \infty}{\longrightarrow} \, 0.
\end{align*}
\end{proof}

\noindent
Now we are finally ready for the proof of Proposition \ref{prop-exp-part-funct}. We treat first the case in the presence of an external
field $B$ and then the case without field.
\begin{proof}[Proof of Proposition \ref{prop-exp-part-funct} (a).]  We fix $\mu \in \left\{0,\ldots,n_2\right\}$ and $\varepsilon > 0$ sufficiently small.
Using  (\ref{prob_tot_prop4.1}) and Lemma \ref{lem-ZN-condM=m} we write
	\begin{equation*}
	Q_{\sss N}\big[\bar{Z}_{\sss M_{\sss N}}^{\sss (2)}(B)\prod_{l=2}^{\infty} c_l(B)^{N_l}\big] 
	\, = \, X^{\sss (1)}_{\sss N, \ell \leq (1-\varepsilon)\frac{n_1}{2}}(\mu) \, + \, X^{\sss (2)}_{\sss N,  \ell \leq 	(1-\varepsilon)\frac{n_1}{2}}(\mu) \, + \, X^{\sss (3)}_{\sss N,  \ell > (1-\varepsilon)\frac{n_1}{2}}
	\end{equation*}
where
	\begin{align}
		X^{\sss (1)}_{\sss N, \ell \leq (1-\varepsilon)\frac{n_1}{2}}(\mu) \, = \, &\sum_{m=0}^{\mu} 	\expec_m[\bar{Z}_{m}^{\sss(2)}(B)] \sum_{\ell = 0}^{(1-\varepsilon)\frac{n_1}{2}}\sum_{k=0}^{n_2-m}\,B_{\ell,k}^{\sss 	(N)}(n_2-m)\q(M_{\sss N}=m),\\ \label{secondo_pezzo}
		X^{\sss (2)}_{\sss N, \ell \leq (1-\varepsilon)\frac{n_1}{2}}(\mu) \, = \, &\sum_{m=\mu +1}^{n_2} 	\expec_m[\bar{Z}_{m}^{\sss(2)}(B)] \sum_{\ell = 0}^{(1-\varepsilon)\frac{n_1}{2}}\sum_{k=0}^{n_2-m}\,B_{\ell,k}^{\sss 	(N)}(n_2-m)\q(M_{\sss N}=m),\\ 
		X^{\sss (3)}_{\sss N,  \ell > (1-\varepsilon)\frac{n_1}{2}} \, = \, 	&\sum_{m=0}^{n_2}\expec_m[\bar{Z}_{m}^{\sss(2)}(B)]\sum_{\ell>(1-\varepsilon)	
	\frac{n_1}{2}}^{n_1/2}\sum_{k=0}^{n_2-m}\,B_{\ell,k}^{\sss (N)}(n_2-m) \q(M_{\sss N}=m).
	\end{align}
We analyze the three pieces separately, showing that only the first of them contributes to the exponential
growth of $Q_{\sss N}\big[\bar{Z}_{\sss M_{\sss N}}^{\sss (2)}(B)\prod_{l=2}^{\infty} c_l(B)^{N_l}\big]$.
By Lemma \ref{lem-vertices-tori} and Lemma \ref{asy_M_N_equal_zero} 
	\begin{align}\label{first}
	X^{\sss (1)}_{\sss N, \ell \leq (1-\varepsilon)\frac{n_1}{2}}(\mu)  =  &{2\pi\, C(s^{*},t^{*})}\, (\det 	\mathbf{Q}(s^{*},t^{*}))^{-\frac 1 2} \exp\{NH(s^{*},t^{*})\} \nn \\
	&\times \sum_{m=0}^\mu\expec_m[\bar{Z}_{m}^{\sss(2)}(B)]
	\q(M=m)(1+o(1)).
	\end{align}
The expression in (\ref{secondo_pezzo}) can be rewritten as
	\begin{align*}
	X^{\sss (2)}_{\sss N, \ell \leq (1-\varepsilon)\frac{n_1}{2}}(\mu)  
	\, &= \, \sum_{m=\mu +1}^{n_2}\expec_m[\bar{Z}_{m}^{\sss(2)}(B)]
	\sum_{\ell = 0}^{(1-\varepsilon)\frac{n_1}{2}}\sum_{k=0}^{n_2-m}\,\frac{B_{\ell,k}^{\sss (N)}(n_2-m) 
	\q(M_{\sss N}=m)}{B_{\ell,k}^{\sss (N)}(n_2)\q(M_{\sss N}=0)}\nn\\
	&\qquad \times B_{\ell,k}^{\sss (N)}(n_2)\q(M_{\sss N}=0).
	\end{align*}
Now, by \eqref{d_m1_su_d_m}, $\frac{B_{\ell,k}^{\sss (N)}(n_2-m) \q(M_{\sss N}=m)}{B_{\ell,k}^{\sss (N)}(n_2-(m-1))\q(M_{\sss N}=m-1)}$ is uniformly bounded by $1-\delta$ for $\delta>0$ sufficiently small, because $\ell \leq (1-\vep)n_1/2$. Using this bound together with Lemma \ref{lem-part-function-tori} yields
	\begin{equation*}
	X^{\sss (2)}_{\sss N, \ell \leq (1-\varepsilon)\frac{n_1}{2}}(\mu) \,  
	\leq \, A \,  \sum_{\ell=0}^{n_1/2}\sum_{k=0}^{n_2} B_{\ell,k}^{\sss (N)}(n_2)\sum_{m=\mu +1}^{n_2} (1-\delta)^m.
	\end{equation*}
Thus there exist $\varepsilon(\mu)$ (with $\varepsilon(\mu)\to 0$ as $\mu\to\infty$) such that, uniformly in N,
	\begin{equation*}
	X^{\sss (2)}_{\sss N, \ell \leq (1-\varepsilon)\frac{n_1}{2}}(\mu) \,  
	\leq \, \varepsilon(\mu )\,\exp\{NH(s^{*},t^{*})\} 	(1+o(1)).
	\end{equation*}
Finally, from Lemma \ref{contribution_l_magg} it results that $X^{\sss (3)}_{\sss N,  \ell > (1-\varepsilon)\frac{n_1}{2}} \, = \, o(\exp\{NH(s^{*},t^{*})\})$. Thus,  {from \eqref{first}} we can identify $I \, = \, H(s^{*},t^{*})$ and
	\begin{equation*}
	J\, = \,  {2 \pi\, C(s^{*},t^{*})}\, (\det \mathbf{Q}(s^{*},t^{*}))^{-\frac 1 2} 
	\sum_{m=0}^{\infty} \expec_m[\bar{Z}_{m}^{\sss(2)}(B)](b^*)^{m}\prob(M=m).
	\end{equation*}
{We remark the previous expression is well-defined since}, from Lemma \ref{lem-part-function-tori}, 
	$$
	\sum_{m=0}^{\infty} \expec_m[\bar{Z}_{m}^{\sss(2)}(B)](b^*)^{m}\prob(M=m) \, \leq \, A \, \expec\left[\right(b^*)^{M}],
	$$ 
which is finite because $b^* < \frac{1+p}{2p}$ (see \eqref{mom_gen_func_b} in Lemma \ref{lem-vertices-tori} and Remark \ref{bstar_less_bc}).
\medskip

\noindent
{\em Proof of Proposition \ref{prop-exp-part-funct} (b)}.
In this case we work with a vanishing external field. Defining   $t_{\sss N}:=\frac{t}{\sqrt{N}}$
and by Taylor expanding (\ref{a-def}) around $0$, we have
	\begin{equation*}
	a\left(t_{\sss N}\right) 
	\,:=\, \frac{A_-\left(t_{\sss N}\right)}{A_+\left(t_{\sss N}\right)} \,=\, C \frac{t^2}{N}(1+o(1)), \qquad \text{as} \quad N\to\infty,
	\end{equation*}
where $C$ is a constant (whose value actually depends on $\beta$).
Thus,
	\begin{align*}
		Q_{\sss N}[\prod_{l=2}^{\infty} c_l(t_{\sss N})^{N_l}\mid M_{\sss N}=m]
		&= Q_{\sss N}[\prod_{l=2}^{\infty} \left(1+a(t_{\sss N})r^l(t_{\sss N})\right)^{N_l}\mid M_{\sss N}=m]\\
		&= Q_{\sss N}\left[\e^{\sum_{l=2}^{\infty}a(t_{\sss N})r^l(t_{\sss N}) N_l} (1+o(1)) \mid M_{\sss N}=m\right]\\ 
		&= Q_{\sss N}\left[\e^{C t^2\sum_{l=2}^{\infty}r^l(t_{\sss N}) 
		p_l^{\sss{(N)}}}\mid M_{\sss N}=m\right](1+o(1)).
	\end{align*}
By writing 
		\eqan{
		\label{split-expec}
		&Q_{\sss N}\left[\e^{C t^2\sum_{l=2}^{\infty}r^l(t_{\sss N}) 
		p_l^{\sss{(N)}}}\mid M_{\sss N}=m\right]\\
		&\qquad=\e^{C t^2\sum_{l=2}^{\infty}r^l(t_{\sss N}) 
		Q_{\sss N}(p_l^{\sss{(N)}})}\nn\\
		&\qquad \quad +\e^{C t^2\sum_{l=2}^{\infty}r^l(t_{\sss N}) 
		Q_{\sss N}(p_l^{\sss{(N)}})}
		Q_{\sss N}\left[\e^{C t^2\sum_{l=2}^{\infty}r^l(t_{\sss N}) 
		(p_l^{\sss{(N)}}-Q_{\sss N}(p_l^{\sss{(N)}}))}-1\mid M_{\sss N}=m\right],\nn
		}
and using formula  (\ref{prob_tot_prop4.1}), we can rewrite $Q_{\sss N}\big[\bar{Z}_{\sss M_{\sss N}}^{\sss (2)}(t_{\sss N})\prod_{l=2}^{\infty} c_l(t_{\sss N})^{N_l}\big]=S_1(N)+S_2(N)$ as the sum of two contributions, due to the two terms in \eqref{split-expec}. 
Now we analyze $S_1(N)$ and $S_2(N)$ as $N\to \infty$.  First, we remark that the sum in the exponential factor converges in this limit.  This can be shown by observing that $r(B)<1$ (see \eqref{defalpha}). Therefore, calling $r^{*}=r(0)$ and given any $\epsilon>0$ such that $r^{*}+\varepsilon<1$,  thanks to the convergence of $r(t_{N})$ to $r^{*}$, we have that for all $N$ sufficiently large,
	$$
	\sum_{l=2}^{\infty}r^l(t_{\sss N})\q(p_l^{\sss{(N)}})\equiv \sum_{l=2}^{N}r^l(t_{\sss N})\q(p_l^{\sss{(N)}})\le 	\sum_{l=2}^{N}(r^{*}+\varepsilon)^{l}
	$$  
where we used the fact that $p_l^{\sss{(N)}}\le 1$  for $l\le N$ and $p_l^{\sss{(N)}}=0$ for all $l>N$. Since the geometric sum in the r.h.s.\ of the previous display is convergent,  the positive series in the l.h.s.  is also convergent to
some positive value $\bar{I}_{0}$.
{Thus, by inserting the first term of the r.h.s. of \eqref{split-expec} in \eqref{prob_tot_prop4.1} and applying bounded convergence, we obtain \eqref{IeJBN0} with} 
	\begin{equation*}
	\bar{I}\left(t \right) = \bar{I}_{0} C t^{2}. 
	\end{equation*}
and	
	\begin{equation*}
  \bar{J} = \sum_{m=0}^{\infty} \expec_m[\bar{Z}_{m}^{\sss(2)}(0)] {\mathbb P}(M=m).
	\end{equation*}
Further, by Lemma \ref{lem-part-function-tori} and the law of total expectation, 
	\begin{equation*}
	S_2(N)\leq A \e^{C t^2\sum_{l=2}^{\infty}r^l(t_{\sss N})
	\q(p_l^{\sss{(N)}})} 
	Q_{\sss N}\Big[\Big|\e^{C t^2\sum_{l=2}^{\infty}r^l(t_{\sss N}) \left(p_l^{\sss{(N)}} - \q(p_l^{\sss{(N)}})\right)} 
	-1\Big|\Big].
	\end{equation*}
We use the Cauchy-Schwarz inequality to bound
	\begin{align}
	\label{CS-bd}
	&Q_{\sss N}\Big[\Big|\e^{C t^2\sum_{l=2}^{\infty}r^l(t_{\sss N}) \left(p_l^{\sss{(N)}} - \q(p_l^{\sss{(N)}})\right)} 
	-1\Big|\Big]\\
	&\leq
	Q_{\sss N}\Big[\Big(\e^{C t^2\sum_{l=2}^{\infty}r^l(t_{\sss N}) \left(p_l^{\sss{(N)}} - \q(p_l^{\sss{(N)}})\right)} 
	-1\Big)^2\Big]^{1/2},\nn
	\end{align}
and, by Jensen's inequality and  H\"{o}lder inequality,
\begin{align*}
	1 &\leq Q_{\sss N}\left[
	\e^{C t^2 \sum_{l=2}^{\infty}r^l(t_{\sss N}) \left(p_l^{\sss{(N)}} - \q(p_l^{\sss{(N)}})\right)} \right]\\
	&\leq \left(Q_{\sss N}\left[
	\e^{C t^2 \sqrt{N}\sum_{l=2}^{\infty}r^l(t_{\sss N}) \left(p_l^{\sss{(N)}} - \q(p_l^{\sss{(N)}})\right)}
	\right] \right)^{\frac{1}{\sqrt{N}}} 
	\leq 1+o(1),
	\end{align*}
due to  the existence of the finite limit of $Q_{\sss N}\left[\e^{C t^2\sqrt{N} \sum_{l=2}^{\infty}r^l(t_{\sss N}) \left(p_l^{\sss{(N)}} - \q(p_l^{\sss{(N)}})\right)}  \right]$ by \cite[Lemma 4.3]{GGPvdH1}.
Therefore, also the term in \eqref{CS-bd} converges to 0, showing that $S_2(N)$ gives a vanishing contribution.
This completes the proof.
\end{proof}

\subsection{Annealed thermodynamic limits and SLLN: Proof of Theorems \ref{ann_press2_CM12} and \ref{slln_ann-CM12}}
\noindent Finally, we prove the existence of the thermodynamic limits.
\begin{proof}[Proof of Theorem \ref{ann_press2_CM12}.]
The thermodynamic limit of the annealed pressure is given by
	\begin{equation*}
	\widetilde{\psi} (\beta, B) 
	\, = \, \lim_{N \rightarrow \infty} \, \widetilde{\psi}_{\sss N} (\beta, B) 
	\, =  \, \lim_{N \rightarrow \infty} \, \frac{1}{N} \log \left(\q \left(Z_{\sss N} (\beta, B)\right)\right).
	\end{equation*}
From \eqref{rewrite-part-func-12} we can rewrite
	\begin{equation*}
	\q \left(Z_{\sss N} (\beta, B)\right)
	\, = \, \lambda_+^N(B)A_+^{n_1/2}(B) Q_{\sss N}\Big[\bar{Z}_{\sss M_{\sss N}}^{\sss (2)}(B)
	\prod_{l=2}^{\infty} c_l(B)^{N_l}\Big],
	\end{equation*}
and then
	\begin{equation*}
	\widetilde{\psi} (\beta, B) \, = \, \log{\lambda_+} + \frac{1-p}{2}\log{A_+} 
	+  \lim_{N \rightarrow \infty} \, 
	\frac{1}{N} \log\Big\{Q_{\sss N}\Big[\bar{Z}_{\sss M_{\sss N}}^{\sss (2)}(B)\prod_{l=2}^{\infty} c_l(B)^{N_l}\Big]\Big\}.
	\end{equation*}
Using Proposition \ref{prop-exp-part-funct} 
we find
	\begin{equation*}
	\widetilde{\psi} (\beta, B) \, = \, \log{\lambda_+(\beta,B)} + \frac{1-p}{2}\log{A_+(\beta,B}) + H(s^*,t^*).
	\end{equation*}
To prove the existence of the thermodynamic limit of the magnetization, we use Lemma \ref{limit_deriv} and the existence of the pressure in the thermodynamic limit. Then, remembering that $(s^*,t^*)$ is the maximum point of the function $H(s,t)$, we compute
	\begin{align}\label{magn_12_expl}\nn
		\widetilde{M} (\beta, B) 
		= \frac{\partial}{\partial B} \widetilde{\psi} (\beta, B)  
		=  &\frac{\partial}{\partial B} \log{\lambda_+(\beta, B)} 
		+ \frac{1-p}{2} \frac{\partial}{\partial B}\log{A_+}(\beta, B)\nn\\
		&+ s^*(\beta, B) \frac{\partial}{\partial B} \log{\left[a(\beta, B)r^2(\beta, B)\right]}\nn\\ 
		& + t^*(\beta, B) \frac{\partial}{\partial B} \log{r(\beta, B)} 
	\end{align}
In the limit of small external field $B$ by Taylor expanding \eqref{a-def} one has $a(\beta,B) = O(B^2)$. Also,
from the fixed point equations \eqref{syst_crit_point} one can check that $s^*(\beta,B) = O(B^2)$.  
As a consequence  $\lim_{B\rightarrow 0^+} \widetilde{M} (\beta, B) = 0$ for all $\beta >0$, 
and therefore, by the definition in (\ref{def_beta_c}), we conclude that there is no phase transition for $\CMNonetwo$.
\end{proof}

\medskip

\noindent
{\it Proof of Theorem \ref{slln_ann-CM12}:} Again, the SLLN follows immediately from the existence of the annealed pressure in the thermodynamic limit and its differentiability with respect to $B$. See the proof of  Theorem \ref{slln_ann} and \cite[Section 2.2]{GGPvdH1}.
\qed
\bigskip

\vspace{0.2cm}
\noindent
{\small
{\bfseries Acknowledgments.}
We are grateful to Aernout van Enter for lively discussions about the interpretation of the annealed measure and its relation to spin and graph dynamics, as well as for his reference to the Hubbard-Stratonovich identity. We thank Institute Henri Poincar\'e for the hospitality during the trimester "Disordered systems, random spatial processes and their applications''. 
We acknowledge financial support from  the Italian Research Funding Agency (MIUR) through FIRB project 
grant n.\ RBFR10N90W.
The work of RvdH is supported in part by the Netherlands
Organisation for Scientific Research (NWO) through VICI grant 639.033.806 and the Gravitation {\sc Networks} grant 024.002.003.}


{\small 
}

\end{document}